
\documentclass[a4paper,12pt]{article}
\usepackage{geometry}

\geometry{truedimen,margin=25truemm}




%


\usepackage[T1]{fontenc}
\usepackage{lmodern}

\usepackage{amsmath}
\usepackage{amssymb}
\usepackage{amsthm}

\usepackage{mathrsfs}

\usepackage{braket}
\usepackage{graphicx}

\usepackage{booktabs}

\usepackage{enumerate}

\usepackage[abbrev, shortalphabetic]{amsrefs}

\newcommand{\R}{\mathbb{R}}
\newcommand{\Z}{\mathbb{Z}}
\newcommand{\N}{\mathbb{N}}
\newcommand{\K}{\mathcal{K}}

\newcommand{\Span}{\operatorname{span}}

\newcommand{\conv}{\operatorname{conv}}
\newcommand{\pos}{\operatorname{pos}}

\newcommand{\interior}{\operatorname{int}}

\usepackage{bm}

\newcommand{\va}{\bm{a}}
\newcommand{\vb}{\bm{b}}
\newcommand{\vc}{\bm{c}}

\newcommand{\vn}{\bm{n}}
\newcommand{\vp}{\bm{p}}
\newcommand{\vr}{\bm{r}}

\newcommand{\vu}{\bm{u}}
\newcommand{\vv}{\bm{v}}
\newcommand{\vw}{\bm{w}}
\newcommand{\vx}{\bm{x}}
\newcommand{\vy}{\bm{y}}

\newcommand{\checkK}{\check{\mathcal{K}}}

\theoremstyle{plain}
\newtheorem{theorem}{Theorem}[section]
\newtheorem{lemma}[theorem]{Lemma}
\newtheorem{proposition}[theorem]{Proposition}
\newtheorem{corollary}[theorem]{Corollary}

\theoremstyle{definition}
\newtheorem{remark}[theorem]{Remark}

\newtheorem{conjecture}[theorem]{Conjecture}

\newcommand{\abs}[1]{\lvert#1\rvert}
\newcommand{\norm}[1]{\lVert#1\rVert}


\usepackage{multirow}

\usepackage{ulem}




\usepackage{tikz}
\usetikzlibrary{cd}
\usetikzlibrary{intersections}
\usetikzlibrary{calc}


%

\usepackage{wasysym}
\newcommand{\Pentagon}{\pentagon}

\newcommand{\Marc}[1]{\stackrel{\raisebox{-1pt}{\rotatebox{90}{\big)}}}{#1}}

\begin{document}

\title{Minimal volume product of three dimensional convex bodies invariant under certain groups of order four}

\author{Hiroshi Iriyeh
\thanks{
College of Science, 
Ibaraki University, 
e-mail: hiroshi.irie.math@vc.ibaraki.ac.jp}
\and 
Masataka Shibata
\thanks{
Department of Mathematics, Meijo University, 
e-mail: mshibata@meijo-u.ac.jp
}}

\maketitle
%
%
%
%
%
%
%

%
%
%
%
%
%
\begin{abstract}
We give the sharp lower bound of the volume product of three dimensional convex bodies which are invariant under two kinds of 
discrete subgroups of $O(3)$ of order four.
We also characterize the convex bodies with the minimal volume product in each case.
This provides new partial results of the non-symmetric version of Mahler's conjecture in the three dimensional case.
\end{abstract}
%
%


\section{Introduction and main results}

%
%
A compact convex set in $\R^n$ with nonempty interior is called a {\it convex body}.
A convex body $K \subset \R^n$ is said to be {\it centrally symmetric} if it satisfies that $K=-K$.
We write $\mathcal{K}^n$ for the set of all convex bodies in $\R^n$ equipped with the Hausdorff metric and by $\mathcal{K}^n_0$ the set of all $K \in \mathcal{K}^n$ which are centrally symmetric.
The interior of $K \in \mathcal{K}^n$ is denoted by $\mathrm{int}(K)$.
The {\it polar body} of $K$ with respect to $z \in \mathrm{int}(K)$ is defined by
\begin{equation*}
K^z :=
\left\{
y \in \R^n;
(y-z) \cdot (x-z) \leq 1 \text{ for any } x \in K
\right\},
\end{equation*}
where $\cdot$ denotes the standard inner product on $\R^n$.
And $\abs{K}$ denotes the $n$-dimensional Lebesgue measure (volume) of $K$ in $\R^n$.
Then the {\it volume product} of $K$ is defined by
\begin{equation}
\label{eq:a}
 \mathcal{P}(K) := \min_{z \in \mathrm{int}(K)}\abs{K} \, \abs{K^z},
\end{equation}
which is invariant under non-singular affine transformations of $\R^n$.
It is known that for each $K \in \mathcal{K}^n$ the minimum of \eqref{eq:a} is attained at the unique point on $\mathrm{int}(K)$, which is called the {\it Santal\'o point} of $K$ (see e.g., \cite{MP}).
For a centrally symmetric convex body $K \in \mathcal{K}^n_0$, the Santal\'o point of $K$ is the origin $o$.
In this case, the right-hand side of \eqref{eq:a} is nothing but $\abs{K} \, \abs{K^\circ}$, where $K^\circ$ denotes the polar body of $K$ with respect to $o$.

Mahler conjecture \cite{Ma1} is a longstanding open problem which states that $\mathcal{P}(K)=\abs{K} \, \abs{K^\circ} \geq 4^n/n!$ for any $K \in \mathcal{K}^n_0$.
This conjecture was confirmed for $n=2$ \cite{Ma2} and $n=3$ \cite{IS}, but the case that $n\geq4$ is still open.
For other partial results, see, e.g., references in \cite{Sc}*{Section 10.7} and \cite{IS}.
Note that an $n$-dimensional cube or, more generally, Hanner polytopes attain the value $4^n/n!$, and these polytopes are conjectured as the minimizers of $\mathcal{P}$.

The following is also known as the non-symmetric version of Mahler conjecture, which does not impose the symmetric assumption on $K$.

\begin{conjecture}
\label{conj}
Any $K \in \mathcal{K}^n$
satisfies that
\begin{equation*}
\mathcal{P}(K) \geq \abs{\triangle^n} \abs{(\triangle^n)^\circ}=\frac{(n+1)^{n+1}}{(n!)^2},
\end{equation*}
where $\triangle^n$ denotes a regular $n$-simplex with $o$ as the centroid. 
\end{conjecture}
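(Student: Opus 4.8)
The statement is the full non-symmetric Mahler conjecture, which is open for $n \geq 3$; accordingly I describe the strategy I would pursue rather than a finished proof, and I indicate precisely where it stalls. The first step is to establish that a minimizer exists and may be taken to be a polytope. Since $\mathcal{P}$ is invariant under non-singular affine maps, I would normalize each body so that its John ellipsoid is the unit ball; the resulting class is compact in the Hausdorff metric and $\mathcal{P}$ is continuous, so a minimizer $K_*$ exists. To see that $K_*$ is polytopal, I would argue that any curved portion of $\partial K_*$ can be replaced by a flat facet while strictly decreasing $\mathcal{P}$, using that at the Santal\'o point $z$ the polar body $K_*^{z}$ has its centroid at $z$, which rigidifies the first-order behaviour and rules out smooth extremals.

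The main engine would be the theory of shadow systems (linear parameter systems) in the sense of Rogers--Shephard and Campi--Gronchi. Given a polytopal candidate, I would deform its vertices linearly in a fixed direction $\theta$, setting $K_t = \conv\{x_i + t\,\alpha_i\,\theta\}$, and recompute the Santal\'o point $z_t$ at each $t$. The goal is the convexity of $t \mapsto \mathcal{P}(K_t)$: convexity forces any minimizer to sit at an extreme configuration, and iterating this over directions $\theta$ collapses vertices and facets until only $n+1$ vertices survive, i.e.\ a simplex. A dual argument, together with the self-duality of the simplex, would then select the regular simplex as the unique affine type attaining $(n+1)^{n+1}/(n!)^2$.

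The decisive obstacle, and the reason the conjecture remains open for $n \geq 3$, is establishing this convexity when the Santal\'o point is free to move. In the centrally symmetric case the Santal\'o point is pinned at $o$ by symmetry, which is exactly what makes the planar and three-dimensional symmetric arguments tractable and underlies the result of \cite{IS}. Without symmetry, $z_t$ is an implicit function of $t$, and the second variation of $\mathcal{P}(K_t)$ acquires cross terms coupling the primal deformation to the motion of $z_t$ and of the polar body, turning the required convexity into a delicate inequality that current techniques cannot verify in dimension three. My fallback plan would therefore be to first prove the conjecture for the symmetric and group-invariant bodies studied in this paper --- where $z_t \equiv o$ is forced and the decomposition methods of \cite{IS} apply --- and then attempt to remove the symmetry hypothesis by a continuity and perturbation argument around the simplex. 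It is precisely this desymmetrization step where I expect the genuine difficulty to lie.
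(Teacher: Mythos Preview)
You have correctly identified that the statement is the full non-symmetric Mahler conjecture, which the paper does \emph{not} prove; it is stated as Conjecture~\ref{conj} and remains open for $n\ge 3$. So there is no ``paper's own proof'' to compare against, and your honest acknowledgment that the argument stalls is appropriate.

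That said, your proposed line of attack is quite different from what the paper actually does for its partial results. You outline a shadow-system/convexity strategy in the Rogers--Shephard and Campi--Gronchi tradition, aiming to deform a polytopal minimizer until only $n+1$ vertices remain. The paper instead works directly with $G$-invariant bodies (for $G=D_2$ or $S_4$), decomposes $K$ and $K^\circ$ into congruent fundamental domains, and bounds the volume product from below via the \emph{signed volume estimate} (Lemmas~\ref{lem:2} and~\ref{lem:1}) combined with an \emph{equipartition} argument (Propositions~\ref{prop:1} and~\ref{prop:4}) that balances the areas of the boundary sections of the fundamental domain. No shadow systems or convexity-in-$t$ appear anywhere.

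Your fallback plan---prove the conjecture first for group-invariant bodies where the Santal\'o point is pinned at $o$, then attempt a desymmetrization---is exactly the program the paper contributes to, but the paper gives no hint that the desymmetrization step is tractable; it simply records that relaxing the group $G$ step by step (from $T_d$ to $T$ to $D_2$ or $S_4$) is a natural approach. Your diagnosis that the moving Santal\'o point is the essential obstruction is reasonable, but note that even within the group-invariant setting the paper needs substantial new work (the equipartition of Section~\ref{sec:3.3} and the extended equality-case analysis of Lemmas~\ref{lem:8}--\ref{lem:4}) beyond what was available for $G=T$, so ``the decomposition methods of \cite{IS} apply'' understates the difficulty already present before any desymmetrization.
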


Conjecture \ref{conj} was proved by Mahler \cite{Ma2} for $n=2$ (see also \cite{Me} for the equality case), but remains open for $n \geq 3$.

A centrally symmetric convex body is regarded as the one which are invariant under the group $\braket{-E} \cong \Z_2$, where $E$ is the identity matrix of size $n$.
Then, for a discrete subgroup $G$ of $O(n)$, it is natural to consider the minimization problem of the volume product $\mathcal{P}(K)$ for $G$-invariant convex bodies $K \in \mathcal{K}^n$ (see e.g., Problem in \cite{IS2}*{p.~739}).
The first result along this line was obtained by \cite{BF}.
To explain it we need some more preparation.
We write $\mathcal{K}^n(G)$ for the set of $G$-invariant convex bodies in $\R^n$.
For a convex body $K_0 \in \mathcal{K}^n$, we consider the subgroup of $SO(n)$ or $O(n)$ defined by
\begin{equation*}
 SO(K_0):=\left\{ g \in SO(n); gK_0=K_0 \right\}, \ \ 
 O(K_0):=\left\{ g \in O(n); gK_0=K_0 \right\}.
\end{equation*}
For instance, as groups $O(\triangle^n)$ and $SO(\triangle^n)$ are isomorphic to the symmetric group and
the alternating group of degree $n+1$, respectively.
For convex bodies in $\mathcal{K}^n(O(\triangle^n))$,
Conjecture \ref{conj} was proved by F.~Barthe and M.~Fradelizi \cite{BF}*{Theorem 1 (i)}.
After that, in \cite{IS3}*{Theorem 1.4}, we relaxed the condition and proved Conjecture \ref{conj} for $K \in \mathcal{K}^n(SO(\triangle^n))$ by using the method of signed volume estimate (see \cite{IS2}*{Theorem 1.5 (i)} for $n=3$).
We remark that the advantage of this method have been explained in \cite{IS2}*{Section 1.4}.

In this paper, we focus on the three dimensional case, and we shall further relax the condition of \cite{IS2}*{Theorem 1.5 (i)} and provide new partial solutions of Conjecture \ref{conj}, based on not only the signed volume estimate but also the equipartition of which effectiveness will be explained in the last part of this section.
In order to give the precise statement, we briefly recall the list of all discrete subgroups of $O(3)$ by using Schoenflies' notation.

We equip $\R^3$ with the usual orthogonal $xyz$-axies and put
\begin{equation*}
R_\ell:=
\begin{pmatrix}
 \cos(2\pi/\ell) & - \sin(2\pi/\ell) & 0 \\
 \sin(2\pi/\ell) & \cos(2\pi/\ell) & 0 \\
 0 & 0 & 1
\end{pmatrix}, 
V:=
\begin{pmatrix}
 1 &  0 & 0 \\
 0 & -1 & 0 \\
 0 &  0 & 1
\end{pmatrix}, 
H:=
\begin{pmatrix}
 1 & 0 &  0 \\
 0 & 1 &  0 \\
 0 & 0 & -1
\end{pmatrix},
\end{equation*}
where $\ell \in \N$. 
Here, $R_\ell$ means the rotation through the angle $2\pi/\ell$ about the $z$-axis.
It is known that, up to conjugation, the discrete subgroups of $O(3)$ are classified as the seven infinite families
\begin{equation*}
\begin{aligned}
& C_\ell:=\braket{R_\ell}, \quad
C_{\ell h}:=\braket{R_\ell,H}, \quad
C_{\ell v}:=\braket{R_\ell,V}, \quad
S_{2\ell}:=\braket{R_{2\ell}H}, \\
& D_\ell:=\braket{R_\ell,VH}, \quad
D_{\ell d}:=\braket{R_{2\ell}H,V}, \quad
D_{\ell h}:=\braket{R_\ell,V,H},
\end{aligned}
\end{equation*}
and the seven finite groups as follows:
\begin{equation*}
\begin{aligned}
T&:=\left\{g \in SO(3); g \triangle =\triangle \right\}, &
T_d&:=\left\{g \in O(3); g \triangle =\triangle \right\}, \quad
T_h:= \left\{\pm g ; g \in T\right\}, \\
O&:=\left\{g \in SO(3); g \Diamond =\Diamond \right\}, &
O_h&:=\left\{g \in O(3); g \Diamond =\Diamond \right\}= \left\{\pm g ; g \in O\right\}, \\
I&:=\left\{g \in SO(3); g \Pentagon =\Pentagon \right\}, &
I_h&:=\left\{g \in O(3); g \Pentagon =\Pentagon \right\}= \left\{\pm g ; g \in I\right\},
\end{aligned}
\end{equation*}
where $\triangle:=\triangle^3$, $\Diamond:=\Diamond^3$, and $\Pentagon:=\Pentagon^3$ denote the regular tetrahedron (regular $3$-simplex), the regular octahedron, and the regular icosahedron with $o$ as their centroids, respectively (see e.g., \cite{CS}*{Table 3.1}).
In general, if the fixed point set of the action of a group $G$ on $\R^3$ consists only on the origin $o$, then both the Santal\'o point and the centroid of $K \in \mathcal{K}^3(G)$ coincide with $o$.
Let $G$ be one of the group in the above list other than $C_\ell$, $C_{\ell v}$ $(\ell \in \N)$.
Then, $G$ has this property, and so $\mathcal{P}(K)=\abs{K}\, \abs{K^\circ}$ holds for $K \in \mathcal{K}^3(G)$, which is similar to the centrally symmetric case.

Now, we focus on the subgroups of $T_d$ or $T$ in the following table:
\begin{equation*}
\begin{tikzcd}[row sep=-0.5em, column sep=0.5em]
& D_2 \arrow[r, phantom, sloped, "\subset"] & ~T~ \arrow[rd, phantom, sloped, "\!\!\!\subset"] \\
{\{e\}} \arrow[ru, phantom, sloped, "\subset"] \arrow[rd, phantom, sloped, "\subset"] & & & T_d \\
& S_4 \arrow[r, phantom, sloped, "\subset"] & D_{2d} \arrow[ru, phantom, sloped, "\!\!\!\subset"]
\end{tikzcd}
\end{equation*}
The orders of the groups $D_2$, $S_4$, $D_{2d}$, $T$, and $T_d$ are $4$, $4$, $8$, $12$, and $24$, respectively.
As groups, $D_2$, $S_4$, and $D_{2d}$ are isomorphic to $\Z_2 \times \Z_2$, $\Z_4$, and the Dihedral group of order $8$, repsectively.
The following are our main results, which provide new partial solutions of Conjecture \ref{conj}.

\begin{theorem}
\label{thm:1}
For any $K \in \mathcal{K}^3(D_2)$, it holds that
\begin{equation*}
\abs{K} \, \abs{K^\circ} \geq \abs{\triangle}\,\abs{\triangle^\circ} = \frac{64}{9},
\end{equation*}
with equality if and only if $K$ is a linear image of $\triangle$ or $\triangle^\circ$ which is invariant under the group $D_2$.
\end{theorem}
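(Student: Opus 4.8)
The plan is to set up an equipartition of $K\in\mathcal{K}^3(D_2)$ adapted to the symmetry and then apply the signed volume estimate. The group $D_2=\braket{R_2,VH}\cong\Z_2\times\Z_2$ acts on $\R^3$ as the Klein four-group generated by the $180^\circ$ rotations about the $x$-, $y$-, and $z$-axes (diagonal subgroup of $SO(3)$). Since the only fixed point is $o$, both the centroid and the Santal\'o point of $K$ are at $o$, so $\mathcal{P}(K)=\abs{K}\abs{K^\circ}$. First I would use the $180^\circ$ rotation $R_2$ about the $z$-axis (which fixes the $z$-axis pointwise) together with a hyperplane bisection to cut $K$ into pieces on which the signed-volume machinery of \cite{IS3} (cf.\ \cite{IS2}*{Theorem 1.5(i)}) can be run; the point of the equipartition is to choose the cutting hyperplanes so that each resulting chamber carries a controlled fraction of both $\abs{K}$ and $\abs{K^\circ}$, turning the global inequality into a sum of local ones.

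The key steps, in order: (1) Normalize by a $D_2$-equivariant linear map so that $K$ is in a convenient position (e.g.\ so that a suitable moment-type condition or an equipartition by the coordinate planes holds). (2) Produce an equipartition of $K$ into congruent (under $D_2$) pieces by planes through $o$; because $R_2^{(z)}, R_2^{(x)}, R_2^{(y)}\in D_2$, the three coordinate planes are natural candidates, and one can further rotate about the $z$-axis to balance volumes, using a continuity/intermediate-value argument to fix the free parameters. (3) On a fundamental chamber, bound the local contribution to $\abs{K}\abs{K^\circ}$ from below by the corresponding quantity for $\triangle$ (or $\triangle^\circ$), using the signed volume estimate: replace the curved boundary of $K$ by the cone over a planar section, estimate the polar side by the dual cone, and track the sign of the volume form to get an inequality rather than just a bound on the absolute value. (4) Sum over the four (or eight) chambers and invoke $\abs{\triangle}\abs{\triangle^\circ}=64/9$. (5) Analyze the equality case: equality in each local estimate forces each chamber to be a cone, and the $D_2$-invariant gluing of these cones, together with the rigidity in the signed volume estimate, forces $K$ to be a linear image of $\triangle$ or of $\triangle^\circ$ that is itself $D_2$-invariant.

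The main obstacle I expect is step (2)–(3): unlike the $SO(\triangle^n)$ case, $D_2$ is a much smaller group, so a single equipartition by symmetry planes does not by itself localize the problem finely enough, and one must combine the equipartition with an extra optimization (choosing the rotation angle about the $z$-axis, and possibly the affine normalization) to make the local signed-volume estimate on each chamber actually close the gap to $64/9$. Getting a clean local inequality — one that is sharp exactly at cones over triangles arranged in a $D_2$-symmetric way — is the crux; the sign bookkeeping in the signed volume estimate must be done carefully so that the chambers where the estimate "goes the wrong way" are exactly compensated, and the equality discussion then requires identifying which $D_2$-invariant simplices and their polars arise, which is a finite case check once the rigidity is in hand.
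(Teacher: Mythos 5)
Your high-level strategy --- a $D_2$-adapted equipartition followed by the signed volume estimate, then rigidity for the equality case --- matches the paper's, but the two concrete choices you make at the decisive points are wrong, and they are exactly the points where the proof lives. First, the decomposition: the paper does not cut $K$ by the coordinate planes. It fixes a $D_2$-orbit of simplex vertices $\vv_{\pm\pm\pm}=(\pm a,\pm b,\pm c)^{\mathrm T}$ and takes as fundamental chamber the cone $K_0=K\cap\pos(\vv_{-++},\vv_{+-+},\vv_{++-})$; the three walls of this chamber are the planes spanned by pairs of these vertices, with normals proportional to $(0,ca,ab)$, $(bc,0,ab)$, $(bc,ca,0)$ --- none of them coordinate planes. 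Moreover, the four chambers $K_0,R_{23}K_0,R_{13}K_0,R_{12}K_0$ are automatically congruent, so your stated goal for the equipartition (``each chamber carries a controlled fraction of $\abs{K}$ and $\abs{K^\circ}$'') is vacuously satisfied and buys nothing. What must actually be balanced is the three boundary sections of the single chamber $K_0$, namely the condition \eqref{eq:1} that the areas $\abs{o*\mathcal{C}(\vv_{+-+},\vv_{++-})}$, etc., normalized by the lengths of the corresponding wall normals, all coincide; only then do the cross terms on the right-hand side of Lemma \ref{lem:1} collapse into a computable sum of three terms, each of which is handled by Lemma \ref{lem:2}.

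Second, the existence of this equipartition cannot be obtained by the one-parameter intermediate-value argument you propose (a rotation angle about the $z$-axis --- which in any case does not commute with all of $D_2$). The balancing condition is \emph{two} independent equations in the two free parameters $(a\!:\!b\!:\!c)$, realized through the diagonal maps $\mathrm{diag}(a,b,c)$ that do commute with $D_2$; the paper assembles the three section functionals into a map $F\colon\R^3_{+++}\to\R^2$ and proves it has a zero by a winding-number/degree argument on the boundary of a triangle in parameter space (Proposition \ref{prop:1}). This genuinely two-dimensional topological step is the crux of the inequality and is absent from your plan. The equality analysis is likewise far more than a ``finite case check'': one must approximate $K$ by smooth $D_2$-invariant bodies, pass the equipartition parameters to a limit, and run the dichotomy of Lemma \ref{lem:4} (cone over a vertex versus cone over an edge on each wall) together with the rigidity of Lemma \ref{lem:7} and a case split on whether $aa^\circ$, $bb^\circ$, $cc^\circ$ attain the value $1$, before the simplex emerges.
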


\begin{theorem}
\label{thm:2}
For any $K \in \mathcal{K}^3(S_4)$, it holds that
\begin{equation*}
\abs{K} \, \abs{K^\circ} \geq \abs{\triangle}\,\abs{\triangle^\circ} = \frac{64}{9},
\end{equation*}
with equality if and only if $K$ is a linear image of $\triangle$ or $\triangle^\circ$ which is invariant under the group $S_4$.
\end{theorem}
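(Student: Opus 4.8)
\emph{Plan.} The goal is to show $\abs{K}\,\abs{K^\circ}\ge 64/9$ by the signed volume estimate of \cite{IS2}, \cite{IS3} applied to an orbit adapted to the $S_4$-action, with an equipartition/continuity step used to choose that orbit and with auxiliary slicing estimates along the distinguished axis; the equality cases are then extracted by a rigidity argument. First I would normalize: since the $S_4$-action on $\R^3$ fixes only $o$, the Santal\'o point of any $K\in\mathcal K^3(S_4)$ is $o$, so $\mathcal P(K)=\abs{K}\,\abs{K^\circ}$ and it suffices to bound the right-hand side; also $K^\circ\in\mathcal K^3(S_4)$, so $K$ and $K^\circ$ play symmetric roles. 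Write $S_4=\braket{g}$ with $g=R_4H$; then $g^2=\operatorname{diag}(-1,-1,1)$ is the half-turn about the $z$-axis, $g$ interchanges the half-spaces $\{z\ge 0\}$ and $\{z\le 0\}$, and $g$ acts on $\Pi:=\{z=0\}$ as the rotation by $\pi/2$. Hence $\Pi$ bisects both $\abs{K}$ and $\abs{K^\circ}$; the slice $K_0:=K\cap\Pi$ and the orthogonal projection of $K^\circ$ to $\Pi$ are $C_4$-invariant planar bodies, dual to each other inside $\Pi$; the top and bottom heights of $K$ agree, say both equal to $h$, because $g$ maps a highest point of $K$ to a lowest one and vice versa; and the $S_4$-orbit of a generic boundary point $p=(x_0,y_0,z_0)\in\partial K$ is a tetrahedron whose projection to $\Pi$ is a square centered at $o$ and whose vertices alternate between heights $z_0$ and $-z_0$. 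For $p$ a vertex of $\triangle$ in standard position this orbit is exactly $\triangle$, whose top and bottom are its two mutually perpendicular opposite edges; this, and the analogous picture for $\triangle^\circ$, is the model for equality.

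\emph{The signed volume estimate.} A purely sectional estimate (comparing the upper half of $K$ with the cone over $K_0$, etc.) loses a constant factor and falls well short of $64/9$, so the heart of the argument must be the signed volume inequality. By a continuity/equipartition argument I would select a boundary point $p\in\partial K$ together with a supporting functional $u\in\partial K^\circ$ at $p$ whose $S_4$-orbits are matched, so that the four supporting hyperplanes $\{x:(g^iu)\cdot x=1\}$ are placed symmetrically with respect to $K^\circ$. Set $P=\conv(S_4\cdot p)\subseteq K$ and $Q=\conv(S_4\cdot u)\subseteq K^\circ$. Feeding $(P,Q)$ into the signed volume inequality of \cite{IS2}, \cite{IS3} relative to $(K,K^\circ)$ — which bounds $\abs{K}\,\abs{K^\circ}$ below by a sum of oriented volumes of simplices spanned by these orbit points, and stays valid when the orbit degenerates — together with the two equal-volume halves cut off by $\Pi$ and the planar duality between $K_0$ and the $\Pi$-projection of $K^\circ$, should produce a lower bound for $\abs{K}\,\abs{K^\circ}$ in terms of finitely many parameters: the height $h$, the twist angle of $p$ in $\Pi$, the geometry of $K_0$, and the corresponding dual data. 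Part of this analysis is shared with the $D_2$ case of Theorem~\ref{thm:1}, the difference being that here the relevant involution is embedded in the order-four rotoreflection $g$ rather than in a Klein four-group of reflections.

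\emph{Optimization and equality.} It then remains to reduce this bound to an explicit low-dimensional minimization and to verify that its infimum is $64/9$. For the equality discussion, tracing back which inequalities are saturated forces $K$ to be trapped between $P$ and a body whose polar is trapped between $Q$ and $K^\circ$; pushing the constraints through — each piece of the decomposition contributes its extremal amount, $K_0$ is a square, the two halves are congruent copies fitting the model, and the chosen supporting hyperplanes touch $K$ along its top and bottom edges — forces $K$ to coincide with the tetrahedron $\conv(S_4\cdot p)$, i.e. an $S_4$-invariant linear image of $\triangle$; by the $K\leftrightarrow K^\circ$ symmetry one also obtains the $S_4$-invariant linear images of $\triangle^\circ$. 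Conversely all of these attain $64/9$ by affine invariance of $\mathcal P$.

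\emph{Main obstacle.} The delicate points are, first, arranging the equipartition/continuity selection so that the signed volume estimate is genuinely sharp: the extremal configuration is not canonical within its orbit, and — unlike the centrally symmetric situation — there is a real asymmetry between $\{z>0\}$ and $\{z<0\}$, and between $K$ and $K^\circ$, so the residual pieces must be accounted for without leaking a constant. Second, the equality analysis: ruling out all near-minimizers except the two families of (possibly degenerate) extremal tetrahedra, and checking that the $S_4$-invariance constraint singles out exactly the claimed linear images of $\triangle$ and $\triangle^\circ$, requires a genuine rigidity argument rather than a convexity shortcut.
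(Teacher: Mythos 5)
Your high-level outline---signed volume estimate plus an equipartition choice, then a rigidity argument for equality---matches the paper's strategy in name, but the two steps that actually carry the proof are either misidentified or missing. First, the decomposition. The paper does not cut $K$ by the plane $\{z=0\}$ and does not use the planar duality between the slice $K\cap\{z=0\}$ and the projection of $K^\circ$; instead it takes the orbit $\vv_k=g^k\vv_0$ of $\vv_0=(1,0,u)^{\mathrm T}$ and splits $K$ into the four congruent cones $g^k\bigl(o*\mathcal{S}_K(\vv_0,\vv_1,\vv_2)\bigr)$ over spherical triangles, then applies Lemma \ref{lem:1} to the closed boundary curve $\mathcal{C}(\vv_0,\vv_1)\cup\mathcal{C}(\vv_1,\vv_2)\cup\mathcal{C}(\vv_2,\vv_0)$. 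Your hybrid of a half-space cut, a slice duality, and ``feeding $(P,Q)$ into the signed volume inequality'' is not a defined estimate, and you give no mechanism by which these ingredients combine. Second, and more importantly, the equipartition you propose (matching a boundary point with a supporting functional) is not the one that is needed. The paper's equipartition is the choice of the single parameter $u>0$ so that the areas of the two inequivalent boundary sections of the fundamental domain satisfy \eqref{eq:4} (Proposition \ref{prop:4}, via the intermediate value theorem); this is precisely what makes the cross terms in the right-hand side of Lemma \ref{lem:1} collapse to $4\,\overline{\mathcal{C}(\vv_0,\vv_1)}\cdot\overline{\Lambda(\mathcal{C}(\vv_0,\vv_1))}+2\,\overline{\mathcal{C}(\vv_2,\vv_0)}\cdot\overline{\Lambda(\mathcal{C}(\vv_2,\vv_0))}$, each term of which is bounded below by Lemma \ref{lem:2}, so that the total equals exactly $4$ with no optimization at all. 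You instead defer the entire quantitative content to ``an explicit low-dimensional minimization'' whose infimum you would ``verify'' to be $64/9$; that verification is the proof of the inequality, and it is absent from your proposal.

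The equality discussion has the same problem: ``tracing back which inequalities are saturated'' requires the sharpened two-dimensional cone lemmas (Lemmas \ref{lem:8}--\ref{lem:4}), which are needed exactly because the dual arc can subtend an angle $\geq\pi$, followed by a five-case analysis in the parameters $uu^\circ$ and $t^\circ$ that distinguishes when the extremal body is a linear image of $\triangle$ from when it is a linear image of $\triangle^\circ$. None of this is visible in your sketch, and the ``trapped between $P$ and $Q$'' picture does not by itself rule out non-simplicial minimizers.
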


Since $S_4$ is a subgroup of $D_{2d}$ and $\triangle \in \mathcal{K}^3(D_{2d})$, Theorem \ref{thm:2} yields the following result.

\begin{corollary}
\label{thm:3}
For any $K \in \mathcal{K}^3(D_{2d})$, it holds that
$\abs{K} \, \abs{K^\circ} \geq \abs{\triangle}\,\abs{\triangle^\circ} = 64/9$,
with equality if and only if $K$ is a linear image of $\triangle$ or $\triangle^\circ$ which is invariant under the group $D_{2d}$.
\end{corollary}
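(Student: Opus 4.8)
The plan is to derive Corollary~\ref{thm:3} as an immediate consequence of Theorem~\ref{thm:2}, using only the group inclusion $S_4 \subset D_{2d}$; the sole point requiring attention is the transfer of the equality characterization. Since $S_4$ is a subgroup of $D_{2d}$, every $D_{2d}$-invariant convex body is a fortiori $S_4$-invariant, i.e. $\mathcal{K}^3(D_{2d}) \subset \mathcal{K}^3(S_4)$. Applying Theorem~\ref{thm:2} to $K \in \mathcal{K}^3(D_{2d})$ therefore yields $\abs{K}\,\abs{K^\circ} \geq \abs{\triangle}\,\abs{\triangle^\circ} = 64/9$ at once.

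For the equality case, first suppose $K \in \mathcal{K}^3(D_{2d})$ attains $\abs{K}\,\abs{K^\circ} = 64/9$. Regarding $K$ as an element of $\mathcal{K}^3(S_4)$, the equality clause of Theorem~\ref{thm:2} forces $K = A\triangle$ or $K = A\triangle^\circ$ for some $A \in GL(3,\R)$, while $K$ is $D_{2d}$-invariant by assumption; thus $K$ is a linear image of $\triangle$ or $\triangle^\circ$ which is invariant under $D_{2d}$. Conversely, let $K$ be a $D_{2d}$-invariant linear image of $\triangle$ or of $\triangle^\circ$, say $K = A\triangle$ with $A \in GL(3,\R)$ (the case $K = A\triangle^\circ$ is handled identically). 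Since $D_{2d}$ is not one of $C_\ell$, $C_{\ell v}$, its fixed-point set in $\R^3$ is $\{o\}$, so by the discussion preceding the theorems both the centroid and the Santal\'o point of $K$ equal $o$; hence $\abs{K}\,\abs{K^\circ} = \mathcal{P}(K)$. By the affine invariance of $\mathcal{P}$ one gets $\mathcal{P}(K) = \mathcal{P}(\triangle) = \abs{\triangle}\,\abs{\triangle^\circ} = 64/9$ (the Santal\'o point of $\triangle$ being its centroid $o$), and likewise $\mathcal{P}(\triangle^\circ) = \abs{\triangle^\circ}\,\abs{\triangle} = 64/9$ in the remaining case. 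So $K$ does achieve equality, and this set of extremizers is non-empty precisely because $\triangle \in \mathcal{K}^3(D_{2d})$.

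Since the entire estimate is already packaged in Theorem~\ref{thm:2}, there is essentially no obstacle here. The only thing demanding care is the bookkeeping just described: checking that imposing the stronger $D_{2d}$-symmetry on an $S_4$-extremizer leaves exactly the $D_{2d}$-invariant linear images of $\triangle$ and $\triangle^\circ$, and verifying that these bodies indeed realize the bound $64/9$ so that the equality case is not vacuous.
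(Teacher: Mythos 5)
Your proposal is correct and follows exactly the route the paper intends: the paper derives Corollary \ref{thm:3} in one line from the inclusion $S_4 \subset D_{2d}$ together with $\triangle \in \mathcal{K}^3(D_{2d})$, and your more detailed bookkeeping of the equality transfer and the converse (via affine invariance of $\mathcal{P}$ and the Santal\'o point being $o$) is just the spelled-out version of that same argument.
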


In this way, it seems a natural approach to Conjecture \ref{conj} to relax gradually the assumption of the discrete subgroup $G \subset O(3)$, especially, its order.

\begin{remark}
\begin{enumerate}[(i)]
\item 
Theorems \ref{thm:1} and \ref{thm:2} extend the results for $G=T$ (\cite{IS2}*{Theorem~1.5 (i)}) and for $G=T_d$ in \cite{BF}*{Theorem~1 (i)}, respectively.
\item 
Theorems \ref{thm:1} and \ref{thm:2}, and Corollary \ref{thm:3} are also solutions of Problem in \cite{IS2}*{p.~739} for the groups $D_2$, $S_4$, and $D_{2d}$, respectively.
In the three dimensional case, the problem is still open for $G=D_{\ell d}$, $S_{2 \ell}$ $(\ell \geq 4)$ in the case where the fixed point set of the action of $G$ on $\R^3$ is $\{o\}$, and otherwise for $C_\ell$, $C_{\ell v}$ $(\ell \geq 1)$.
\end{enumerate}
\end{remark}
Other than the results explained until now,
partial results concerning Conjecture \ref{conj} are not so many.
In \cite{BF}, besides \cite{BF}*{Theorem 1 (i)} explained above, they proved that Conjecture \ref{conj} is true for convex bodies in $\R^n$ which have reflection symmetries with respect to hyperplanes $H_1, \dots, H_m \subset \R^n$ with the property that $\bigcap_{i=1}^m H_i=\{o\}$.
M.~Meyer and S.~Reisner \cite{MR} confirmed the conjecture for polytopes in $\R^n$ possessing at most $n+3$ vertices.
In the three dimensional case, Conjecture \ref{conj} is true in the case where a convex body $K \subset \R^3$ satisfies
$K=\operatorname{conv}(K \cap H_1,K \cap H_2)$ for two planes $H_1, H_2 \subset \R^3$ (\cite{FMZ}).
And the conjecture is also true for the convex bodies in $\R^n$ which are close to the $n$-simplex with $o$ as the centroid with respect to the Banach--Mazur distance (\cite{KR}).

This paper is organized as follows.
In Section \ref{sec:2}, we first review necessary facts about two and three dimensional bodies.
Known facts concerning the signed volume estimate are arranged from Lemma \ref{lem:5} to Lemma \ref{lem:1}, which have been used in \cite{IS} and \cite{IS2}.
However, for the characterization of the equality case in Theorems \ref{thm:1} and \ref{thm:2}, we need further notions and results.
We provide a technique to handle a two dimensional ``cone'' whose central angle is larger than $\pi$ (Lemmas \ref{lem:8} and \ref{lem:3}).
By using it, we can extend the result \cite{IS2}*{Proposition 3.6}, which was a basic tool to characterize the equality case for $K \in \mathcal{K}^3(T)$ (Lemma \ref{lem:4}).
We explain and prove these results from scratch.

In Section \ref{sec:3}, we prove Theorem \ref{thm:1}.
For $K \in \mathcal{K}^3(D_2)$, we can choose a fundamental domain of the action of $D_2$ on $K$ with its three boundary sections.
However, their sizes are a priori different from one another.
On the other hand, as we treated in \cite{IS2}*{Section 4.1}, $K \in \mathcal{K}^3(SO(\triangle^n)) \cong \mathcal{K}^3(T)$ is equipped with additional symmetries, so that the three boundary sections above are congruent one another, which enabled us to use the signed volume estimate directly for $K \in \mathcal{K}^3(T)$.
This is the essential difference between $\mathcal{K}^3(D_2)$ and $\mathcal{K}^3(T)$, and is the reason that we have to do equipartition of $K \in \mathcal{K}^3(D_2)$.
In Section \ref{sec:3.3}, by using the freedom of the action of nonsingular linear transformations (commute with $D_2$), we prove the existence of the good division of $K$ which satisfies the condition \eqref{eq:1}.
Owing to \eqref{eq:1}, we can carry out the the signed volume estimate to get the inequality in Theorem \ref{thm:1}.
In Section \ref{sec:3.4}, we characterize the equality case of the inequality by using new tools prepared in Section \ref{sec:2}.

In Section \ref{sec:4}, we prove Theorem \ref{thm:2} for $K \in \mathcal{K}^3(S_4)$.
Although the story of the proof is basically the same as in Section \ref{sec:3}, the calculation is more complicated than the case $K \in  \mathcal{K}^3(D_2)$, because the three boundary sections of the fundamental domain are not the same type in contrast to the case 
of $D_2$.
After the preparation in Section \ref{sec:4.1}, we establish the inequality of Theorem \ref{thm:2} in Section \ref{sec:4.2}.
Finally, in Section \ref{sec:4.3}, we characterize the equality case of the inequality.

\section{Preliminaries}
\label{sec:2}

From now on, we write $\mathcal{K}^3$ as $\mathcal{K}$.
Let $K \in \mathcal{K}$ be a convex body whose interior contains the origin $o$.
For any $g \in O(3)$, we have
\begin{equation*}
 (g K)^\circ = (g^{\mathrm{T}})^{-1} K^\circ = g K^\circ.
\end{equation*}
Hence, for each subgroup $G \subset O(3)$, $K \in \mathcal{K}(G)$ means that  $K^\circ \in \mathcal{K}(G)$.

Let $\mathcal C$ be an oriented piecewise $C^\infty$-curve in $\R^3$ and $\vr(t)$ $(0 \leq t \leq 1)$ a parametrization of $\mathcal C$.
Then we define a vector $\overline{\mathcal{C}} \in \R^3$ by
\begin{equation*}
\overline{\mathcal{C}} := \frac{1}{2} \int_\mathcal{C} \vr \times d \vr = \frac{1}{2} \int_0^1 \vr(t) \times \vr'(t) \,dt.
\end{equation*}
Note that $\overline{\mathcal{C}}$ is independent of the choice of a parametrization of $\mathcal C$.
If the curve $\mathcal C$ is on a plane in $\R^3$ passing through the origin $o$, then $\overline{\mathcal C}$ is a normal vector of the plane.
For any $g \in O(3)$, we obtain
\begin{equation*}
\overline{g \mathcal{C}} = \det(g) g \,\overline{\mathcal C}.
\end{equation*}
We denote by $\mathcal{C}^{-1}$ the curve in $\R^3$ with the same image of the curve $\mathcal C$ but with the opposite direction
(see \cite{IS3}*{p.~18006}).
Then $\overline{\mathcal{C}^{-1}}=-\overline{\mathcal C}$ holds.

We consider the ruled surface
$o*{\mathcal C}:= \{ \lambda \vu \in \R^3; \vu \in {\mathcal C}, 0 \leq \lambda \leq 1 \}.$
For any $\vx \in \R^3$, the {\it signed volume} of the solid
\begin{equation*}
\vx*(o*{\mathcal C})
:=\{ (1-\nu)\vx+\nu \vy \in \R^3; \vy \in o*{\mathcal C}, 0 \leq \nu \leq 1 \}
\end{equation*}
is defined by
\begin{equation*}
 \frac{1}{3} \vx \cdot \overline{\mathcal C}
= \frac{1}{6} \int_0^1 \det(\vx, \vr(t), \vr'(t)) \,dt.
\end{equation*}

Let $K \in \mathcal{K}$ with $o \in \interior K$.
The {\it radial function} of $K$ is defined by $\rho_K(\vx) := \max\{ \lambda \geq 0; \lambda \vx \in K \}$ for $\vx \in \R^3 \setminus \{ o \}$.
For any two vectors $\va, \vb \in \R^3 \setminus \{o\}$
with $\va \nparallel \vb$, let us consider the oriented curve $\mathcal{C}(\va,\vb):=\mathcal{C}_K(\va,\vb)$ on the boundary $\partial K$ defined by
\begin{equation*}
\mathcal{C}_K(\va,\vb):=\{ \rho_K((1-t)\va + t\vb) ((1-t)\va+t\vb) \in \R^3;  0 \leq t \leq 1 \}.
\end{equation*}
For any vectors $\va_1,\dots,\va_m \in \R^3 \setminus \{o\}$
 with $\va_i \nparallel \va_{i+1}$, let $\mathcal{C}_K(\va_1,\dots,\va_m)$ denote the oriented curve on $\partial K$ consisting of the successive oriented curves $\mathcal{C}(\va_i,\va_{i+1})$, $i=1,\ldots,m-1$;
\begin{equation*}
\mathcal{C}_K(\va_1,\dots,\va_m):=\mathcal{C}(\va_1,\va_2) \cup \cdots \cup \mathcal{C}(\va_{m-1},\va_m).
\end{equation*}
If $\mathcal{C}_K(\va_1,\dots,\va_m)$ is a simple closed curve on $\partial K$, that is, 
it has no self-intersection and $\rho_K(\va_m)\va_m=\rho_K(\va_1)\va_1$, then we denote by $\mathcal{S}_K(\va_1,\dots,\va_{m-1})$ the part of $\partial K$ enclosed by the curve such that the orientation of the part is compatible with that of the curve.

Let $K \in \K$ with $o \in \interior K$ and $\mu_K$ denotes its Minkowski gauge.
We note that $\rho_K(\vx)=1/\mu_K(\vx)$ for $\vx \in \R^3 \setminus \{o\}$ and $K=\{ \vx \in \R^3; \mu_K(\vx) \leq 1 \}$.
Suppose that $\partial K$ is a $C^\infty$-hypersurface in $\R^3$, then $K$ is said to be {\it strongly convex} if the Hessian matrix $D^2(\mu_K^2/2)(\vx)$ of $C^\infty$-function $\mu_K^2/2$ on $\R^3$ is positive definite for each $\vx \in \R^3$ with $\norm{\vx}=1$, where $\norm{\vx} :=\sqrt{\vx \cdot \vx}$.
To prove the inequalities in Theorems \ref{thm:1} and \ref{thm:2}, we introduce the following class of convex bodies:
\begin{equation*}
\checkK:=\left\{
K \in \mathcal{K}; o \in \interior K, \text{$K$ is strongly convex, $\partial K$ is of class $C^\infty$}
\right\}.
\end{equation*}
For a convex body $K \in \checkK$, we can define a $C^\infty$-map 
$\Lambda=\Lambda_K: \partial K \to \partial K^\circ$ by
\begin{equation*}
\Lambda(\vx)=\nabla \mu_K(\vx) \quad (\vx \in \partial K).
\end{equation*}
If $K$ is strongly convex with the boundary $\partial K$ of class $C^\infty$, then $K^\circ$ is also strongly convex with $\partial K^\circ$ of class $C^\infty$, and the map $\Lambda: \partial K \to \partial K^\circ$ is a $C^\infty$-diffeomorphism satisfying that $\vx \cdot \Lambda(\vx)=1$ (see \cite{Sc}*{Section 1.7.2} and \cite{IS}*{Section 3.1}).
As for a two dimensional convex body $K \subset \R^2$, we can also define the map $\Lambda=\Lambda_K$ in a similar way.
Note that the curve $\mathcal{C}(\va,\vb)$ on $\partial K$ is in the plane of $\R^3$ spanned by $\va$ and $\vb$, but its image $\Lambda(\mathcal{C}(\va,\vb))$ into $\partial K^\circ$ is not necessarily contained in a plane of $\R^3$.

\begin{lemma}[\cite{IS2}*{Lemma 3.1}]
\label{lem:5}
Let $K \in \checkK$ be a convex body and $\va, \vb \in \R^3 \setminus \{o\}$ be two vectors with $\va \nparallel \vb$.
Let $H \subset \R^3$ be the plane spanned by $\va$ and $\vb$, and $\Pi_H$ denotes the orthogonal projection from $\R^3$ onto $H$.
Then $\Pi_H \circ \Lambda_K = \Lambda_{K \cap H}$ holds on $\partial(K \cap H)$.
\end{lemma}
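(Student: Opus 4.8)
The statement to prove is Lemma \ref{lem:5}: for $K \in \checkK$ and two non-parallel vectors $\va,\vb$, with $H = \Span(\va,\vb)$ and $\Pi_H$ the orthogonal projection onto $H$, we have $\Pi_H \circ \Lambda_K = \Lambda_{K\cap H}$ on $\partial(K\cap H)$.

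\medskip

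\noindent\textbf{Proof proposal.}

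The plan is to unwind both sides of the claimed identity at a fixed boundary point $\vx \in \partial(K \cap H)$ and compare them using the characterizing property of the polar map, namely that $\Lambda_L(\vx)$ is the unique vector $\vy$ with $\vx \cdot \vy = 1$ and $\vy \in \partial L^\circ$ (equivalently, $\vy$ is the outer unit normal direction of $L$ at $\vx$, normalized so that $\vx \cdot \vy = 1$; in the smooth strongly convex setting this is exactly $\nabla \mu_L(\vx)$ since $\mu_L$ is $1$-homogeneous, so $\vx \cdot \nabla\mu_L(\vx) = \mu_L(\vx) = 1$ on $\partial L$). First I would fix $\vx \in \partial(K \cap H)$ and set $\vn := \nabla \mu_K(\vx) = \Lambda_K(\vx) \in \R^3$; by the homogeneity identity just recalled, $\vx \cdot \vn = 1$, and $\vn$ is the (suitably scaled) outer normal to $K$ at $\vx$.

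\medskip

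Next I would verify that $\Pi_H \vn = \Lambda_{K\cap H}(\vx)$, which amounts to checking the two defining properties within the plane $H$. For the normalization: since $\vx \in H$ and $\Pi_H$ is the orthogonal projection, $\vx \cdot \Pi_H \vn = (\Pi_H \vx)\cdot \vn = \vx \cdot \vn = 1$, using self-adjointness of $\Pi_H$. For the normal-cone property: the supporting hyperplane of $K$ at $\vx$ has normal $\vn$, i.e. $\vn \cdot (\vz - \vx) \le 0$ for all $\vz \in K$; restricting to $\vz \in K \cap H \subset H$ and again using $\Pi_H \vz = \vz$, $\Pi_H \vx = \vx$, we get $(\Pi_H\vn)\cdot(\vz - \vx) \le 0$ for all $\vz \in K \cap H$, so $\Pi_H \vn$ is an outer normal (within $H$) to $K \cap H$ at $\vx$. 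Hence $\Pi_H\vn$ is a scalar multiple of $\Lambda_{K\cap H}(\vx)$, and the normalization $\vx \cdot \Pi_H \vn = 1 = \vx \cdot \Lambda_{K\cap H}(\vx)$ forces equality — provided that the common normal direction is unique (one-dimensional) and that $\vx \cdot (\cdot)$ does not vanish on it, which holds because $\partial(K\cap H)$ is smooth and strongly convex inside $H$ with $o$ in its interior, so $\Lambda_{K\cap H}$ is a well-defined $C^\infty$ diffeomorphism onto $\partial(K\cap H)^\circ$ as recalled in the excerpt.

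\medskip

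The one point that needs genuine care — and the step I expect to be the main (minor) obstacle — is the transfer of smoothness and strong convexity from $K$ in $\R^3$ to the slice $K \cap H$ in $H$: one must know that $K \cap H \in \checkK$ relative to $H$ (or at least that $\Lambda_{K \cap H}$ is a well-defined smooth map), so that the uniqueness argument above is legitimate and $\Lambda_{K\cap H}(\vx) = \nabla \mu_{K\cap H}(\vx)$ makes sense. This follows from the transversality of the smooth hypersurface $\partial K$ with the plane $H$ through the interior point $o$: the intersection $\partial K \cap H$ is a smooth curve, and positive-definiteness of $D^2(\mu_K^2/2)$ on $\R^3$ restricts to positive-definiteness of $D^2(\mu_{K\cap H}^2/2)$ on $H$ (the restriction of a positive-definite quadratic form to a subspace is positive definite), giving strong convexity of the slice. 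Once this is in place, the proof closes: both $\Pi_H \circ \Lambda_K$ and $\Lambda_{K\cap H}$ send $\vx$ to the unique point of $H$ lying on the outer normal ray of $K\cap H$ at $\vx$ and pairing to $1$ against $\vx$, so they coincide on all of $\partial(K\cap H)$.
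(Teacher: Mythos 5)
Your argument is correct, and all the delicate points are handled: the normalization $\vx\cdot\Pi_H\vn=(\Pi_H\vx)\cdot\vn=\vx\cdot\vn=1$ via self-adjointness of $\Pi_H$ and Euler's relation for the $1$-homogeneous gauge, the fact that $\Pi_H\vn$ lies in the (one-dimensional) outer normal cone of $K\cap H$ at $\vx$ inside $H$, and the transfer of smoothness and strong convexity to the slice via transversality and restriction of the positive-definite Hessian. Note that the paper itself offers no proof of this statement, citing only \cite{IS2}*{Lemma 3.1}, so there is nothing to compare line by line; the standard argument there is the more compressed version of what you wrote: since $\mu_{K\cap H}=\mu_K|_H$ on $H$ (a point of $H$ lies in $K\cap H$ iff it lies in $K$), and the intrinsic gradient of the restriction of a smooth function to a linear subspace is the orthogonal projection of the ambient gradient, one gets $\Lambda_{K\cap H}(\vx)=\nabla_H(\mu_K|_H)(\vx)=\Pi_H\nabla\mu_K(\vx)=\Pi_H\Lambda_K(\vx)$ directly, which subsumes your normal-cone verification. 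Your route buys a proof that works even if one only knows $\Lambda$ through its characterization as the normalized outer normal, at the cost of having to argue uniqueness; the gradient identity is shorter but leans on the explicit formula $\Lambda=\nabla\mu$.
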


We move on to the case of $G$-invariant convex bodies in $\checkK$.
For a subgroup $G$ of $O(3)$,
we introduce the class
\begin{equation*}
\checkK(G):= \left\{K \in \checkK; gK=K \text{ for all } g \in G\right\}.
\end{equation*}
Then we obtain the following formula.

\begin{lemma}[\cite{IS2}*{Lemma 3.2}]
\label{lem:6}
Let $K \in \checkK(G)$.
For any $\vx \in \partial K$ and $g \in G$, we obtain
\begin{equation*}
g \Lambda_K(\vx)
= \Lambda_K(g \vx)
\end{equation*}
\end{lemma}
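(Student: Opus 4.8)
The statement to prove is Lemma~\ref{lem:6}: for $K \in \checkK(G)$, any $\vx \in \partial K$, and any $g \in G$, one has $g\Lambda_K(\vx) = \Lambda_K(g\vx)$.

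\medskip

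The plan is to argue directly from the definition $\Lambda_K(\vx) = \nabla\mu_K(\vx)$ together with the $G$-invariance of the gauge $\mu_K$. First I would record the elementary fact that $K = gK$ is equivalent to $\mu_K(g\vx) = \mu_K(\vx)$ for all $\vx \in \R^3$; this is immediate from $\mu_K(\vx) = \inf\{\lambda > 0 : \vx \in \lambda K\}$ and the fact that $g$ maps $\lambda K$ onto $\lambda gK = \lambda K$. So $\mu_K$ is constant along $G$-orbits, i.e. $\mu_K \circ g = \mu_K$ as functions on $\R^3$, for each $g \in G$.

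\medskip

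Next I would differentiate this identity. Since $K \in \checkK$, the function $\mu_K$ is $C^\infty$ away from the origin (in particular in a neighbourhood of any $\vx \in \partial K$), so the chain rule applies to $\mu_K(g\vy) = \mu_K(\vy)$: taking gradients in $\vy$ gives $g^{\mathrm{T}}\,(\nabla\mu_K)(g\vy) = (\nabla\mu_K)(\vy)$, because the Jacobian of the linear map $\vy \mapsto g\vy$ is $g$ and the chain rule contributes the transpose. Since $g \in O(3)$ we have $g^{\mathrm{T}} = g^{-1}$, so $(\nabla\mu_K)(g\vy) = g\,(\nabla\mu_K)(\vy)$, that is $\Lambda_K(g\vy) = g\Lambda_K(\vy)$. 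Evaluating at $\vy = \vx \in \partial K$ and noting $g\vx \in \partial(gK) = \partial K$ so that $\Lambda_K(g\vx)$ makes sense, this is exactly the claimed identity.

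\medskip

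There is no serious obstacle here; the only points requiring a modicum of care are (i) justifying that $\mu_K$ is genuinely differentiable near $\partial K$ so that the chain-rule computation is legitimate — this is guaranteed by the smoothness and strong convexity built into the definition of $\checkK$, as recalled in the paragraph preceding Lemma~\ref{lem:5} — and (ii) keeping track of the transpose in the chain rule, which is precisely why the orthogonality $g \in O(3)$ is used. Alternatively, one could avoid differentiation altogether and argue geometrically: $\Lambda_K(\vx)$ is characterized as the unique point $\vy \in \partial K^\circ$ with $\vx\cdot\vy = 1$ at which the supporting hyperplane of $K$ at $\vx$ has outer normal parallel to $\vy$; applying the orthogonal map $g$, which preserves the inner product and sends $K$ to $K$ and $K^\circ$ to $K^\circ$, carries this characterizing property of $\Lambda_K(\vx)$ to the same property for $\Lambda_K(g\vx)$, whence $g\Lambda_K(\vx) = \Lambda_K(g\vx)$ by uniqueness. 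I would present the gradient computation as the main argument since it is the shortest.
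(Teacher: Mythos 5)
Your proof is correct. The paper itself gives no proof of this lemma, citing it directly from \cite{IS2}*{Lemma 3.2}; your chain-rule computation on the $G$-invariant identity $\mu_K\circ g=\mu_K$, using $g^{\mathrm{T}}=g^{-1}$ to convert $(g^{\mathrm{T}})^{-1}$ into $g$, is the standard and expected argument, and the smoothness needed to differentiate is exactly what membership in $\checkK$ provides.
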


Although not every $G$-invariant convex body $K$ is equipped with the map $\Lambda_K$,
due to the following approximation result, it is sufficient to consider the class $\checkK(G)$ for estimating $\abs{K} \, \abs{K^\circ}$ from below.

\begin{proposition}[Schneider \cite{Sc2}*{p.~438}]
\label{prop:5}
Let $G$ be a discrete subgroup of $O(3)$.
Let $K \in \K(G)$ be a $G$-invariant convex body.
Then, for any $\varepsilon > 0$ there exists a $G$-invariant convex body $K_\epsilon \in \checkK(G)$ having the property that $\delta(K,K_\epsilon) < \varepsilon$, where $\delta$ denotes the Hausdorff distance on $\K$.
\end{proposition}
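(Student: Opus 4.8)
The plan is to combine the classical smoothing theorem for arbitrary convex bodies with a finite group-averaging that restores $G$-invariance without destroying smoothness or strong convexity.

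First I would note that, since $O(3)$ is compact, every discrete subgroup $G \subset O(3)$ is finite; set $N := \abs{G}$. By the classical approximation theorem (every convex body can be approximated in the Hausdorff metric by convex bodies with $C^\infty$ boundary of everywhere positive Gaussian curvature; cf.\ \cite{Sc}), one may pick $M \in \checkK$ with $\delta(K,M) < \varepsilon$, and then $o \in \interior M$ holds automatically once $\delta(K,M)$ is small, because $o \in \interior K$. If one prefers an explicit construction of such an $M$: average $K$ over $SO(3)$ against a $C^\infty$ probability density concentrated near the identity to obtain a body with $C^\infty$ support function, then add a small Euclidean ball to make the reduced Hessian of the support function positive definite; each operation perturbs $K$ by an arbitrarily small Hausdorff amount.

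The heart of the argument is the symmetrization. Define the Minkowski average
\begin{equation*}
K_\varepsilon := \frac{1}{N}\sum_{g \in G} gM .
\end{equation*}
Because $h_{gM}(u) = h_M(g^{-1}u)$ and the property ``$\partial M \in C^\infty$ with everywhere positive Gaussian curvature'' is equivalent to ``$h_M \in C^\infty(\R^3\setminus\{o\})$ with positive definite reduced Hessian'', each $gM$ again lies in $\checkK$; moreover $h_{K_\varepsilon} = \frac1N\sum_{g\in G} h_{gM}$ is again $C^\infty$ off the origin, with reduced Hessian equal to the average of positive definite matrices, hence positive definite. Thus $K_\varepsilon \in \checkK$ (equivalently, one may argue with $K_\varepsilon^\circ$ via the Minkowski gauge description, using the polarity statement recalled before Lemma \ref{lem:5}). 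For every $g_0 \in G$ we have $g_0 K_\varepsilon = \frac1N \sum_{g\in G}(g_0 g)M = K_\varepsilon$, so $K_\varepsilon \in \checkK(G)$, and $o \in \interior(gM)$ for all $g$ forces $o \in \interior K_\varepsilon$.

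Finally, for the distance estimate I would use that $K$ is already $G$-invariant, hence $K = \frac1N\sum_{g\in G} gK$, together with the invariance of $\delta$ under $O(3)$, its subadditivity under Minkowski addition and its $1$-homogeneity under dilation:
\begin{equation*}
\delta(K_\varepsilon, K) = \delta\!\left(\tfrac1N\sum_{g\in G} gM,\ \tfrac1N\sum_{g\in G} gK\right) \leq \frac1N\sum_{g\in G}\delta(gM,gK) = \delta(M,K) < \varepsilon .
\end{equation*}
The only genuine subtlety, and the step I expect to need the most care, is the stability of the class $\checkK$ under Minkowski addition and under the $O(3)$-action: this is where one must pass between the paper's definition of strong convexity (phrased via the Hessian of the Minkowski gauge) and the dual description in terms of the support function, for which the polarity statement quoted before Lemma \ref{lem:5} is exactly what is needed.
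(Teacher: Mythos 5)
Your proof is correct, but note that the paper itself gives no proof of this proposition: it is quoted verbatim from Schneider's paper on smooth approximation, whose construction is a single smoothing operator on the support function (an integral average over the rotation group followed by the addition of a small ball) that commutes with every orthogonal map fixing the origin, so the approximant inherits \emph{all} symmetries of $K$ automatically, with no separate symmetrization step and with no need for $G$ to be finite. Your route is genuinely different: you smooth first without regard to symmetry and then restore $G$-invariance by Minkowski-averaging over the (necessarily finite) group. This buys modularity --- the classical approximation theorem is used as a black box --- at the price of two extra verifications, both of which you correctly identify and both of which go through: the stability of $\checkK$ under the $O(3)$-action and under Minkowski addition, and the contraction estimate $\delta(K_\varepsilon,K)\leq\delta(M,K)$. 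The latter rests on the identity $K=\frac{1}{N}\sum_{g\in G}gK$ for $G$-invariant $K$ together with the subadditivity, positive homogeneity, and $O(3)$-invariance of $\delta$, all standard; the former reduces, as you say, to the support-function characterization of $C^\infty$ boundary with positive curvature (reached from the paper's gauge-based definition via $h_M=\mu_{M^\circ}$ and the duality statement quoted before Lemma~\ref{lem:5}), where $D^2h_{A+B}=D^2h_A+D^2h_B$ makes positive definiteness of the reduced Hessian manifestly stable under sums. One point worth making explicit: the conclusion $K_\epsilon\in\checkK(G)$ presupposes $o\in\interior K$, since the class $\checkK$ requires it; this is implicit in the statement and automatic in all of the paper's applications, where the groups in question fix only the origin.
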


In addition, to estimale $\abs{K}\, \abs{K^\circ}$ from below sharply we need the following two lemmas.
\begin{lemma}[\cite{IS2}*{Proposition 3.5}]
\label{lem:2}
Let $K \in \checkK$.
For any $\va, \vb \in \partial K$ with $\va \nparallel \vb$,
it holds that
\begin{equation*}
 \overline{\mathcal{C}(\va,\vb)} \cdot \overline{\Lambda(\mathcal{C}(\va,\vb))} \geq \frac{1}{4}(\va-\vb) \cdot (\Lambda(\va) - \Lambda(\vb)).
\end{equation*}
\end{lemma}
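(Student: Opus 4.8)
The plan is to reduce the three dimensional inequality to a two dimensional one on the planar section $K\cap H$ and then to prove the latter by a direct computation with sector areas. Let $H:=\Span\{\va,\vb\}$, fix a unit normal $\vn$ of $H$, and set $L:=K\cap H$; this is a strongly convex planar body with $C^\infty$ boundary and $o$ in its relative interior, so $\Lambda_L$ is defined. Parametrize $\mathcal{C}(\va,\vb)\subset\partial L$ by $\vr(t)$, $0\le t\le1$, with $\vr(0)=\va$ and $\vr(1)=\vb$, and put $\vs(t):=\Lambda_L(\vr(t))\in\partial L^\circ$; throughout, $\vu\wedge\vv$ denotes the scalar determinant of $\vu,\vv$ in the oriented plane $H$. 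First I would observe that both sides depend only on $L$. Since $\vr,\vr'\in H$, the vector $\overline{\mathcal{C}(\va,\vb)}=\tfrac12\int_0^1\vr\times\vr'\,dt$ is a multiple of $\vn$, namely $\overline{\mathcal{C}(\va,\vb)}=A_1\vn$ with $A_1:=\tfrac12\int_0^1\vr\wedge\vr'\,dt$; and because the first slot of $\det(\vn,\cdot,\cdot)$ kills the $\vn$-components of the other two slots, Lemma~\ref{lem:5} gives $\overline{\Lambda(\mathcal{C}(\va,\vb))}\cdot\vn=\tfrac12\int_0^1\det\bigl(\vn,\Pi_H\Lambda_K(\vr),(\Pi_H\Lambda_K(\vr))'\bigr)\,dt=A_2$ with $A_2:=\tfrac12\int_0^1\vs\wedge\vs'\,dt$. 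Hence the left-hand side equals $A_1A_2$, while on the right $\va,\vb\in H$ give $\va\cdot\Lambda_K(\vb)=\va\cdot\vs(1)$ and $\vb\cdot\Lambda_K(\va)=\vb\cdot\vs(0)$, so that (using $\vx\cdot\Lambda(\vx)=1$) the assertion becomes
\[
A_1A_2\ \ge\ \tfrac14\bigl(2-\va\cdot\vs(1)-\vb\cdot\vs(0)\bigr).
\]

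Next I would extract the infinitesimal structure of the pair $(\vr,\vs)$. Differentiating $\mu_L(\vr(t))\equiv1$ yields $\vr'\cdot\vs=0$, and then differentiating $\vr\cdot\vs\equiv1$ yields $\vr\cdot\vs'=0$. Writing $J$ for the quarter-turn on $H$, these force $\vr'=\alpha\,J\vs$ and $\vs'=\beta\,J\vr$ for scalar functions $\alpha,\beta$, and the identity $\vu\wedge J\vv=\vu\cdot\vv$ then gives $\alpha=\vr\wedge\vr'$ and $\beta=\vs\wedge\vs'$. Both are positive once $\vn$ is chosen so that $t\mapsto\vr(t)$ winds counterclockwise about $o$ (this uses strong convexity, together with the fact that $\Lambda_L$ preserves this orientation on the boundary). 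Consequently $A_1=\tfrac12\int_0^1\alpha\,dt$ and $A_2=\tfrac12\int_0^1\beta\,dt$ are positive sector areas, and the target reduces to $\bigl(\int_0^1\alpha\bigr)\bigl(\int_0^1\beta\bigr)\ge 2-\va\cdot\vs(1)-\vb\cdot\vs(0)$.

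The heart of the argument is to recast the right-hand side of this last inequality as a double integral that telescopes against $\bigl(\int_0^1\alpha\bigr)\bigl(\int_0^1\beta\bigr)$. From $\vs(1)=\vs(0)+\int_0^1\beta\,J\vr\,dt$, the identity $\vu\cdot J\vv=-(\vu\wedge\vv)$, and $\tfrac{d}{du}\bigl(\vr(0)\wedge\vr(u)\bigr)=\vr(0)\wedge\vr'(u)=\alpha(u)\,\bigl(\vr(0)\cdot\vs(u)\bigr)$ (again by $\vu\wedge J\vv=\vu\cdot\vv$), I would obtain
\[
1-\va\cdot\vs(1)=\int_0^1\beta(t)\Bigl(\int_0^t\alpha(u)\,\bigl(\vr(0)\cdot\vs(u)\bigr)\,du\Bigr)\,dt,
\]
and, symmetrically, $1-\vb\cdot\vs(0)=\int_0^1\alpha(t)\bigl(\int_0^t\beta(u)\,(\vs(0)\cdot\vr(u))\,du\bigr)\,dt$. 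Since $\alpha,\beta>0$ and $\vr(0)\cdot\vs(u)\le1$, $\vs(0)\cdot\vr(u)\le1$ by polarity, the sum of these two quantities is at most $\int_0^1\bigl(\beta(t)A(t)+\alpha(t)B(t)\bigr)\,dt=\int_0^1(AB)'\,dt=A(1)B(1)=\bigl(\int_0^1\alpha\bigr)\bigl(\int_0^1\beta\bigr)$, where $A(t):=\int_0^t\alpha$ and $B(t):=\int_0^t\beta$. This is exactly the required bound, and unwinding the identifications of the first two paragraphs completes the proof.

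In my view the main obstacle is this last step: the polarity bounds $\vr(0)\cdot\vs(u)\le1$ and $\vs(0)\cdot\vr(u)\le1$ look far too lossy, and the content of the argument is that the ``boundary term'' $2-\va\cdot\vs(1)-\vb\cdot\vs(0)$ must first be peeled off and then, via the substitutions $\vr'=\alpha J\vs$, $\vs'=\beta J\vr$ and the identity $\vu\wedge J\vv=\vu\cdot\vv$, rewritten precisely as the primitive of $(AB)'$, so that applying those bounds loses no slack at all. A secondary technical point is the reduction to the plane: one must verify that $\overline{\Lambda(\mathcal{C})}\cdot\vn$ depends only on the planar projection $\Pi_H\Lambda_K(\mathcal{C})=\Lambda_L(\mathcal{C})$ of the generally non-planar curve $\Lambda(\mathcal{C})$ — which is exactly what Lemma~\ref{lem:5} supplies — and that the orientation of $H$ can be fixed so that $\alpha$ and $\beta$ are simultaneously positive.
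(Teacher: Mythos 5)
Your argument is correct, but it is worth noting that the paper does not actually prove this lemma here: it is imported verbatim from \cite{IS2}*{Proposition 3.5}, and the route implicit in the present paper (and in \cite{IS2}) is the one embodied by Lemma \ref{lem:8}. That route performs the same reduction to the planar section $L=K\cap H$ that you carry out via Lemma \ref{lem:5} (so that the left-hand side becomes $\abs{L}\,\abs{L^\circ}$ for the two sectors and the right-hand side becomes $\frac14(\va-\vb)\cdot(\Lambda_L\va-\Lambda_L\vb)$), but then finishes in two lines by the dual-point trick: $\frac{1}{2\abs{L}}J(\va-\vb)\in L^\circ$ and $\frac{1}{2\abs{L^\circ}}J(\Lambda\va-\Lambda\vb)\in L$, whence the product of the two sector areas is bounded below by pairing these two points. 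Your proof replaces this with a direct computation: the relations $\vr'=\alpha J\vs$, $\vs'=\beta J\vr$ with $\alpha=\vr\wedge\vr'\ge 0$, $\beta=\vs\wedge\vs'\ge 0$, the exact identities $1-\va\cdot\vs(1)=\int_0^1\beta(t)\int_0^t\alpha(u)\,(\vr(0)\cdot\vs(u))\,du\,dt$ and its mirror, and the telescoping $\int_0^1(\beta A+\alpha B)=A(1)B(1)$ after applying the polarity bounds $\vr(0)\cdot\vs(u)\le 1$, $\vs(0)\cdot\vr(u)\le 1$. I checked the identities and the sign conventions ($\vu\wedge J\vv=\vu\cdot\vv$, $\vu\cdot J\vv=-\vu\wedge\vv$, $\alpha=\rho^2\det(\va,\vb)>0$, $\alpha\beta=\vr'\cdot D^2\mu_L(\vr)\vr'\ge 0$) and they all work, so the estimate is tight exactly where it should be and the argument is complete. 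What the dual-point proof buys additionally is an explicit equality characterization (used later in Lemmas \ref{lem:8} and \ref{lem:3}) and it applies to arbitrary boundary points $\va^\circ,\vb^\circ$ of $K^\circ$, not only to $\Lambda\va,\Lambda\vb$; what your proof buys is that it is entirely self-contained and makes visible where the slack sits, namely in the factors $\vr(0)\cdot\vs(u)$ and $\vs(0)\cdot\vr(u)$.
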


\begin{lemma}[\cite{IS2}*{Lemma 3.10}]
\label{lem:1}
Let $K \in \checkK$.
Let $\mathcal C$ be a piecewise $C^\infty$, oriented, simple closed curve on $\partial K$.
Let $\mathcal{S}_K(\mathcal{C}) \subset \partial K$ be a piece of surface enclosed by the curve $\mathcal C$ such that the orientation of the surface is compatible with that of $\mathcal C$.
Then, the following inequality holds:
\begin{equation*}
\abs{o*\mathcal{S}_K(\mathcal{C})} \, \abs{o*\mathcal{S}_{K^\circ}(\Lambda(\mathcal{C}))}
\geq \frac{1}{9} \, \overline{\mathcal{C}} \cdot \overline{\Lambda(\mathcal{C})}.
\end{equation*} 
\end{lemma}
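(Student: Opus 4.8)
The plan is to reduce the estimate to a one-sided comparison between the cone volume $\abs{o*\mathcal S_K(\mathcal C)}$ and the vector $\overline{\mathcal C}$ for a single body, and then to run that comparison for both $K$ and $K^\circ$. Two elementary identities underlie it. First, a radial change of variables (the cone over a piece of $\partial K$ is embedded because $K$ is convex with $o\in\interior K$) gives $\abs{o*\mathcal S_K(\mathcal C)}=\tfrac13\int_{\mathcal S_K(\mathcal C)}\vx\cdot\vn\,dA$, where $\vn$ is the outer unit normal of $\partial K$. Second, the vector-area form of Stokes' theorem gives $\overline{\mathcal C}=\int_{\mathcal S_K(\mathcal C)}\vn\,dA$ with the same $\vn$: by the definition of $\mathcal S_K(\mathcal C)$ its orientation ``compatible with $\mathcal C$'' is precisely the one induced by the outer normal. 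Subtracting, for every $\vp\in K$ we get
\begin{equation*}
\abs{o*\mathcal S_K(\mathcal C)}-\tfrac13\,\vp\cdot\overline{\mathcal C}
=\tfrac13\int_{\mathcal S_K(\mathcal C)}(\vx-\vp)\cdot\vn\,dA\ \geq\ 0,
\end{equation*}
since $\vp\in K$ and $\vx\in\partial K$ with outer normal $\vn$ force $\vp\cdot\vn\leq h_K(\vn)=\vx\cdot\vn$. Choosing $\vp\in K$ with $\vp\cdot\overline{\mathcal C}=h_K(\overline{\mathcal C})$ then yields $\abs{o*\mathcal S_K(\mathcal C)}\geq\tfrac13 h_K(\overline{\mathcal C})$.

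Applying the identical reasoning to $K^\circ\in\checkK$ and to the simple closed curve $\Lambda(\mathcal C)\subset\partial K^\circ$ (whose enclosed patch $\mathcal S_{K^\circ}(\Lambda(\mathcal C))$ is again, by definition, oriented by the outer normal of $K^\circ$) gives $\abs{o*\mathcal S_{K^\circ}(\Lambda(\mathcal C))}\geq\tfrac13 h_{K^\circ}(\overline{\Lambda(\mathcal C)})$. Multiplying the two bounds,
\begin{equation*}
\abs{o*\mathcal S_K(\mathcal C)}\,\abs{o*\mathcal S_{K^\circ}(\Lambda(\mathcal C))}\ \geq\ \tfrac19\,h_K(\overline{\mathcal C})\,h_{K^\circ}(\overline{\Lambda(\mathcal C)}).
\end{equation*}
The proof then closes with the elementary duality inequality $h_K(\vw)\,h_{K^\circ}(\vu)\geq\vu\cdot\vw$ for all $\vu,\vw\in\R^3$: since $h_{K^\circ}=\mu_K$ and $\vu/\mu_K(\vu)\in\partial K\subset K$ when $\vu\neq o$, one has $\vw\cdot\big(\vu/\mu_K(\vu)\big)\leq h_K(\vw)$ (the case $\vu=o$ being trivial). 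Taking $\vu=\overline{\Lambda(\mathcal C)}$ and $\vw=\overline{\mathcal C}$ turns the last display into the asserted inequality.

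The step I expect to need the most care is the orientation bookkeeping in the first paragraph: one must check that the orientation of $\mathcal S_K(\mathcal C)$ ``compatible with $\mathcal C$'' is indeed the outward one of $\partial K$, so that simultaneously $(\vx-\vp)\cdot\vn\geq0$ and $\overline{\mathcal C}=\int_{\mathcal S_K(\mathcal C)}\vn\,dA$ hold with the same $\vn$, and likewise on the polar side for $\Lambda(\mathcal C)$; reversing the orientation of $\mathcal C$ would replace $\mathcal S_K(\mathcal C)$ by the complementary patch and flip the sign of $\overline{\mathcal C}$, but these two changes are tied together consistently. Everything else is a routine application of Stokes' theorem together with standard facts relating support functions and gauges; in particular, this route does not appear to require Lemma~\ref{lem:2}.
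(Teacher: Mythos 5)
Your proof is correct and is essentially the paper's own argument (as given for the cited \cite{IS2}*{Lemma 3.10} and reproduced in this paper for the polygonal analogue, Lemma~\ref{lem:7}): both reduce the claim to the single-body estimate $3\,\abs{o*\mathcal{S}_K(\mathcal{C})}\geq \vx\cdot\overline{\mathcal{C}}$ for all $\vx\in K$, i.e.\ $\overline{\mathcal{C}}/(3\abs{o*\mathcal{S}_K(\mathcal{C})})\in K^\circ$, apply it to $K$ and $K^\circ$, and conclude from $\vc\cdot\vc^\circ\leq 1$, which is your $h_K(\vw)\,h_{K^\circ}(\vu)\geq\vu\cdot\vw$. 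Your Stokes/divergence derivation of that key estimate, and your attention to the outward-normal orientation convention for $\mathcal{S}_K(\mathcal{C})$, are both sound.
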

From now on, we make preparations for characterizing the equality conditions of the inequalities in Theorems \ref{thm:1} and \ref{thm:2}.
We first recall an estimate of the volume product for a part of a plane convex body $K \subset \R^2$.
For our purpose, we have to extend the formula given by \cite{BMMR}*{Lemma 7} as follows
(see also \cite{IS2}*{Lemmas 2.1, 2.3 and 2.4}).

\begin{lemma}
\label{lem:8}
Let $K \subset \R^2$ be a convex body containing the origin $o$ in its interior.
Assume that $\va, \vb \in \partial K$ with $\va \neq \vb$ and $\va^\circ, \vb^\circ \in \partial K^\circ$ with $\va^\circ \neq \vb^\circ$.
Here, we measure the angles $\angle \va o \vb$ and $\angle \va^\circ o \vb^\circ$ counterclockwise.
\begin{center}
\begin{tabular}{ccc}
\begin{tikzpicture}[scale=1.5,baseline=0]
\coordinate[label=below left:$o$] (O) at (0,0);

\def\scale{0.8}

\coordinate (A) at ($\scale*(1,5/4)$);
\coordinate (B) at ($\scale*(-1,3/4)$);
\coordinate (C) at ($\scale*(-1,-3/4)$);
\coordinate (D) at ($\scale*(1,-5/4)$);
\path[name path=line 1] (O)--(1,2);
\path[name path=line 2] (A)--(B);
\path[name path=line 3] (O)--(1,-2);
\path[name path=line 4] (C)--(D);
\path[name  intersections={of=line 1  and line 2,  by={E}}];
\path[name  intersections={of=line 3  and line 4,  by={F}}];

\filldraw[fill=lightgray!40!white, thick] (D)--(O)--(A)--cycle; 
\draw[very thick] (A)--(B)--(C)--(D)--cycle;
\draw[->,>=stealth,semithick] (-1.5,0)--(1.5,0); 
\draw[->,>=stealth,semithick] (0,-1.5)--(0,1.5); 
\node at (A) [above] {$\vb$};
\node at (D) [below] {$\va$};
\node at (20/20,5/20) {\large$L$};

\draw [->, thick] ($0.3*\scale*(1,-5/4)$) arc [start angle = -51.34019, end angle = 51.34019, radius = {\scale*0.3*1.60078}];
\end{tikzpicture}
&  & 
\begin{tikzpicture}[scale=1.5, baseline=0]
\coordinate[label=below left:$o$] (O) at (0,0);

\def\scale{1.125}

\coordinate (A) at ($\scale*(1,0)$);
\coordinate (B) at ($\scale*(-1/4,1)$);
\coordinate (C) at ($\scale*(-1,0)$);
\coordinate (D) at ($\scale*(-1/4,-1)$);
\filldraw[fill=lightgray!40!white, thick] (O)--(D)--(A)--(B)--cycle; 
\draw[very  thick] (A)--(B)--(C)--(D)--cycle; 
\draw[->,>=stealth,semithick] (-1.5,0)--(1.5,0); 
\draw[->,>=stealth,semithick] (0,-1.5)--(0,1.5); 
\node at (B) [above] {$\vb^\circ$};
\node at (D) [below] {$\va^\circ$};
\node at (15/20,15/20) {\large$L^\circ$};

\draw [->, thick] ($\scale*0.25*(-1/4,-1)$) arc [start angle = -104.04, end angle = 104.04, radius = {\scale*0.25*1.0307764}];
\end{tikzpicture}
\\
\end{tabular}
\end{center}
We put $L:= o\,* \! \Marc{\va \vb} \left(=o*\mathcal{C}_K(\va, \vb) \text{ if } 0<\angle \va o \vb < \pi \right)$ and $L^\circ:= o\,* \Marc{\va^\circ \vb^\circ\!\!}\,\,$.
Then we have
\begin{equation}
\label{2Dineq}
\abs{L} \, \abs{L^\circ} \geq \frac{1}{4}(\va-\vb)\cdot (\va^\circ-\vb^\circ).
\end{equation}
If the equality of \eqref{2Dineq} holds, then there exist two points $\vc \in \partial K$ and $\vc^\circ \in \partial K^\circ$ such that 
\begin{equation*}
L=o*([\va, \vc] \cup [\vc, \vb]), \quad \vc \cdot J\va \geq 0, \quad \vc \cdot (-J\vb) \geq 0
\end{equation*}
and
\begin{equation*}
L^\circ=o*([\va^\circ, \vc^\circ] \cup [\vc^\circ, \vb^\circ]), \quad \vc^\circ \cdot J\va^\circ \geq 0, \quad \vc^\circ \cdot (-J\vb^\circ) \geq 0,
\end{equation*}
where J denotes  the standard complex structure on $\R^2$.
\end{lemma}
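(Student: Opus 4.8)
\emph{Plan.} The idea is to reduce everything to the case where \emph{both} central angles $\angle\va o\vb$ and $\angle\va^\circ o\vb^\circ$ lie in $(0,\pi]$. In that range $L=o*\mathcal{C}_K(\va,\vb)$ and $L^\circ=o*\mathcal{C}_{K^\circ}(\va^\circ,\vb^\circ)$ are honest cones over boundary arcs, and both inequality \eqref{2Dineq} and its equality case are contained in \cite{BMMR}*{Lemma 7} and \cite{IS2}*{Lemmas 2.1, 2.3 and 2.4}: equality there forces $L$ (resp.\ $L^\circ$) to have the two--segment shape $o*([\va,\vc]\cup[\vc,\vb])$ (resp.\ $o*([\va^\circ,\vc^\circ]\cup[\vc^\circ,\vb^\circ])$) with the stated sign conditions and --- crucially for the equality discussion below --- with $\vc$ and $\vc^\circ$ polar--conjugate (i.e.\ $\vc\cdot\vc^\circ=1$ and $\vc^\circ$ is an outer normal of $K$ at $\vc$). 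From now on assume, say, $\angle\va o\vb\in(\pi,2\pi)$; the case $\angle\va^\circ o\vb^\circ\in(\pi,2\pi)$ is symmetric.

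For the inequality, bisect the wide arc: let $\vd\in\Marc{\va\vb}$ be the boundary point whose direction from $o$ bisects $\angle\va o\vb$, and likewise take $\vd^\circ\in\Marc{\va^\circ\vb^\circ}$ bisecting $\angle\va^\circ o\vb^\circ$ \emph{only if} the latter also exceeds $\pi$. Write $L=L_1\cup L_2$ with $L_1=o*\Marc{\va\vd}$, $L_2=o*\Marc{\vd\vb}$; these overlap only along $[o,\vd]$, so $\abs{L}=\abs{L_1}+\abs{L_2}$, and each $L_i$ is a cone of central angle $\angle\va o\vb/2<\pi$. Similarly $L^\circ=L_1^\circ\cup L_2^\circ$ when the polar arc was cut. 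Every one of the (at most four) pairs $(L_i,L_j^\circ)$ now has both central angles $\le\pi$, so the base case gives $\abs{L_i}\abs{L_j^\circ}\ge\frac14(\vx_{i-1}-\vx_i)\cdot(\vx_{j-1}^\circ-\vx_j^\circ)$, where $\va=\vx_0,\vx_1,\dots=\vb$ and $\va^\circ=\vx_0^\circ,\dots=\vb^\circ$ are the subdivision points. Summing,
\begin{equation*}
\abs{L}\,\abs{L^\circ}=\Bigl(\sum_i\abs{L_i}\Bigr)\Bigl(\sum_j\abs{L_j^\circ}\Bigr)=\sum_{i,j}\abs{L_i}\,\abs{L_j^\circ}\ \ge\ \frac14\sum_{i,j}(\vx_{i-1}-\vx_i)\cdot(\vx_{j-1}^\circ-\vx_j^\circ),
\end{equation*}
and the double sum telescopes exactly to $\frac14(\va-\vb)\cdot(\va^\circ-\vb^\circ)$, proving \eqref{2Dineq}. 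No terms were thrown away, so equality in \eqref{2Dineq} is \emph{equivalent} to equality in \emph{every} one of the base-case estimates above, and this holds for \emph{every} admissible choice of split points (admissible meaning the sub-arc angles stay $\le\pi$; since $\angle\va o\vb<2\pi$, the admissible directions for $\vd$ fill a nondegenerate interval).

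For the equality case, assume \eqref{2Dineq} holds with equality. Fixing admissible $\vd$ (and $\vd^\circ$), all base-case estimates are equalities, so the base-case equality characterization writes each sub-arc of $\partial K$ and of $\partial K^\circ$ occurring here as a union of at most two segments, with the corner of each piece polar--conjugate to the corner of the piece it is paired with. Letting $\vd$ and $\vd^\circ$ range over their admissible intervals and comparing the corners produced by the different pairs, one concludes that $\Marc{\va\vb}$ and $\Marc{\va^\circ\vb^\circ}$ must each consist of exactly two segments; this yields $\vc\in\partial K$, $\vc^\circ\in\partial K^\circ$ with $L=o*([\va,\vc]\cup[\vc,\vb])$ and $L^\circ=o*([\va^\circ,\vc^\circ]\cup[\vc^\circ,\vb^\circ])$. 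The sign conditions $\vc\cdot J\va\ge0$, $\vc\cdot(-J\vb)\ge0$ (and the polar ones) then come for free: a segment not through $o$ subtends an angle $<\pi$ there, so the fact that $[\va,\vc]$ and $[\vc,\vb]$ are the two segments making up $\Marc{\va\vb}$ forces the direction of $\vc$ into the interval that keeps $\angle\va o\vc$ and $\angle\vc o\vb$ both $\le\pi$ --- which is precisely what the sign conditions say.

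The crux is the collapsing step: showing that $\Marc{\va\vb}$ cannot carry two genuine corners after cutting. This is where the coupling in the base-case equality characterization is essential --- two distinct corners of $K$ on $\Marc{\va\vb}$ would force the two matching boundary points of $K^\circ$ to lie simultaneously in two distinct edges of $K^\circ$, hence at their common vertex, and letting the split point vary then contradicts the freedom of that split point. Everything else --- setting up the cones over the sub-arcs, checking the overlaps are negligible, and the telescoping identity --- is routine once the base case (for arbitrary $\va^\circ,\vb^\circ\in\partial K^\circ$ and central angles $\le\pi$) is in hand.
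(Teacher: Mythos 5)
The inequality half of your argument is correct and takes a genuinely different route from the paper's. You subdivide the wide arc, invoke a base case in which both central angles are at most $\pi$, and telescope; the paper instead proves the general statement in one stroke by observing that $\vx\cdot\tfrac12J(\va-\vb)\le\abs{L}$ holds for \emph{every} $\vx\in K$ irrespective of the size of $\angle\va o\vb$, so that $\vc^\circ:=\tfrac{1}{2\abs{L}}J(\va-\vb)\in K^\circ$ and $\vc:=\tfrac{1}{2\abs{L^\circ}}J(\va^\circ-\vb^\circ)\in K$, and \eqref{2Dineq} is just $\vc\cdot\vc^\circ\le 1$. The direct route is shorter, needs no base case beyond itself, and --- crucially --- produces the explicit pair $(\vc,\vc^\circ)$ on which the entire equality analysis rests. (Your route also needs the base case, including its equality characterization, for \emph{arbitrary} $\va^\circ,\vb^\circ\in\partial K^\circ$ unrelated to $\va,\vb$ and including degenerate positions; that is more than \cite{BMMR}*{Lemma 7} literally states.)

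The equality case is where you have a genuine gap: the ``collapsing step'' that you yourself flag as the crux is not proved, and the mechanism you propose for it does not work. First, in a true equality configuration with $\angle\va o\vb>\pi$, equality automatically holds in \emph{every} sub-estimate for \emph{every} admissible split point $\vd$, so ``letting the split point vary'' cannot by itself produce a contradiction; if it did, it would show that equality never occurs when an angle exceeds $\pi$, which is false and would break the paper's later use of Lemma \ref{lem:3} in situations where $\angle\va^\circ o\vb^\circ\ge\pi$. Second, the objects you propose to compare across splits are not well defined: the base-case statement only asserts the \emph{existence} of some two-segment decomposition of each sub-arc, and when a sub-arc is a single segment the witnessing point is an arbitrary point of that segment, while the canonical conjugate point $\tfrac{1}{2\abs{L^\circ}}J(\va^\circ-\vb^\circ)$ may lie on the line extending that segment but entirely outside the sub-arc and its angular sector. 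Concretely, take $K=\conv\{(1,0),(0,1),(-1,-1)\}$, $\va=(0,1)$, $\vb=(1,0)$ (so $\angle\va o\vb=3\pi/2$), $\va^\circ=(-2,1)$, $\vb^\circ=(1,-2)$: then $\abs{L}\,\abs{L^\circ}=\tfrac32=\tfrac14(\va-\vb)\cdot(\va^\circ-\vb^\circ)$, and for a split point $\vd$ in the relative interior of the edge $[(0,1),(-1,-1)]$ the sub-arc $\Marc{\va\vd}$ is a single segment whose conjugate point $(-1,-1)$ does not lie on it; there is no ``corner of the piece'' to compare, and your step ``two distinct corners force two distinct edges of $K^\circ$, hence their common vertex'' has no support. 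What actually makes the collapsing work (when only the $K$-side is cut) is that the conjugate point $\vc=\tfrac{1}{2\abs{L^\circ}}J(\va^\circ-\vb^\circ)$ depends only on the unsplit polar datum and is therefore the \emph{same} for both sub-arcs; equality forces each sub-arc to consist of segments lying on lines through this one point, and a simple convex arc with that property is either a single segment (impossible when it subtends more than $\pi$) or exactly $[\va,\vc]\cup[\vc,\vb]$. That information comes from the \emph{proof} of the base case, not from its statement --- and once you open that box you are re-deriving the paper's direct argument, which makes the subdivision detour unnecessary.
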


\begin{proof}
For any $\vx, \vy \in \R^2$, we have
\begin{equation*}
\det(\vx,\vy) = \vx \cdot (-J\vy).
\end{equation*}
For any point $\vx \in K$, the sum of the signed area of the triangle $o \va \vx$ and that of $o \vx \vb$ is less than or equal to $\abs{L}$, that is,
\begin{equation}
\label{2Dineq2}
\abs{L} \geq \frac{1}{2}(\det(\va,\vx)+\det(\vx,\vb))
=\frac{1}{2}(\vx \cdot J\va + \vx \cdot (-J\vb))
=\vx \cdot \frac{1}{2}J(\va-\vb)
\end{equation}
holds for any $\vx \in K$, which means that
\begin{equation*}
\vc^\circ:=
\frac{1}{2\abs{L}}J(\va-\vb) \in K^\circ.
\end{equation*}
Similarly, we obtain
\begin{equation*}
\vc:=
\frac{1}{2\abs{L^\circ}}J(\va^\circ-\vb^\circ) \in K.
\end{equation*}
Since $\vc^\circ \cdot \vc \leq 1$, the inequality \eqref{2Dineq} holds.

If the equality of \eqref{2Dineq} holds, then the equality in \eqref{2Dineq2} holds for $\vx=\vc$.
This means that $L=\conv\{o,\va,\vc,\vb\}$ and $\vc \in L \cap \partial K$.
Moreover, $\vc \cdot J\va \geq 0$ and $\vc \cdot (-J\vb) \geq 0$ must be hold.
The argument for $L^\circ$ is the same.
\end{proof}

We use the usual notation representing the closed and open line segments joining points $\va$ and $\vb$ in $\R^2$ or $\R^3$ as follows:
\begin{equation*}
[\va, \vb] := \{ (1-t) \va+ t \vb \mid 0 \leq t \leq 1 \}, \quad
(\va, \vb):= \{ (1-t) \va+ t \vb \mid 0 < t < 1 \}.
\end{equation*}

\begin{lemma}
\label{lem:3}
Under the assumptions of Lemma \ref{lem:8}, suppose also that $\va \cdot \va^\circ=\vb \cdot \vb^\circ=1$, $\va \cdot \vb^\circ<1$, and $\vb \cdot \va^\circ<1$.
Then the inequality \eqref{2Dineq} holds with equality if and only if either the following {\upshape (i)} or {\upshape (ii)} is satisfied$:$
\begin{enumerate}[\upshape (i)]
 \item 
$L=o*([\va, \vc] \cup [\vc, \vb])$ and $L^\circ= o*([\va^\circ, \vc^\circ] \cup [\vc^\circ, \vb^\circ])=\conv \left\{o, \va^\circ, \vb^\circ\right\}$ with $0< \angle \va^\circ o \vb^\circ < \pi$, where
\begin{equation*}
\vc = \frac{1}{2\abs{L^\circ}}J(\va^\circ-\vb^\circ)
= \frac{1}{J\va^\circ \cdot \vb^\circ}J(\va^\circ-\vb^\circ), \quad
\vc^\circ = \frac{J\va^\circ \cdot \vb^\circ}{(\va-\vb) \cdot (\va^\circ-\vb^\circ)}J(\va-\vb),
\end{equation*}
 \item 
$L=o*([\va, \vc] \cup [\vc, \vb])=\conv \left\{o, \va, \vb\right\}$ with $0< \angle \va o \vb < \pi$ and $L^\circ= o*([\va^\circ, \vc^\circ] \cup [\vc^\circ, \vb^\circ])$, where
\begin{equation*}
\vc = \frac{J\va \cdot \vb}{(\va-\vb) \cdot (\va^\circ-\vb^\circ)}J(\va^\circ-\vb^\circ), \quad
\vc^\circ = \frac{1}{2\abs{L}}J(\va-\vb)= \frac{1}{J \va \cdot \vb}J(\va-\vb).
\end{equation*}
\end{enumerate}
\end{lemma}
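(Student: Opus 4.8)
The plan is to start from the equality analysis already contained in Lemma \ref{lem:8}, introduce the two scalars $p=\va\cdot\vc^\circ$ and $q=\va^\circ\cdot\vc$, and show that the extra hypotheses force $p=1$ or $q=1$; the first alternative will turn out to be exactly case (ii) and the second exactly case (i), while the converse is a one-line computation. Concretely, in the equality case Lemma \ref{lem:8} (and its proof) produces $\vc\in\partial K$, $\vc^\circ\in\partial K^\circ$ with $L=o*([\va,\vc]\cup[\vc,\vb])$, $L^\circ=o*([\va^\circ,\vc^\circ]\cup[\vc^\circ,\vb^\circ])$, $\vc\cdot\vc^\circ=1$, the sidedness conditions $\vc\cdot J\va\ge 0$, $\vc\cdot(-J\vb)\ge 0$ and their $\circ$-analogues, together with $\vc^\circ=\frac{1}{2\abs{L}}J(\va-\vb)$ and $\vc=\frac{1}{2\abs{L^\circ}}J(\va^\circ-\vb^\circ)$. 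Since $\vc^\circ$ is orthogonal to $\va-\vb$ and $\vc$ to $\va^\circ-\vb^\circ$, the numbers $p:=\va\cdot\vc^\circ=\vb\cdot\vc^\circ$ and $q:=\va^\circ\cdot\vc=\vb^\circ\cdot\vc$ are well defined, they satisfy $p\le 1$ and $q\le 1$ because $\va,\vb,\vc\in K$ and $\va^\circ,\vb^\circ,\vc^\circ\in K^\circ$, and a direct computation using the two formulas above gives $p=\det(\va,\vb)/(2\abs{L})$ and $q=\det(\va^\circ,\vb^\circ)/(2\abs{L^\circ})$; in particular $\sgn p=\sgn\det(\va,\vb)$ and $\sgn q=\sgn\det(\va^\circ,\vb^\circ)$.

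The heart of the proof — and the step I expect to be the main obstacle — is the claim that $p=1$ or $q=1$, which I would prove by contradiction, assuming $p<1$ and $q<1$. Then $\vc\notin\{\va,\vb\}$ and $\vc\notin[\va,\vb]$ (otherwise $\vc\cdot\vc^\circ=p<1$), so $[\va,\vc]$ and $[\vc,\vb]$ are non-collinear segments contained in $\partial K$, and since $\vc$ lies in the relative interior of $\Marc{\va\vb}=[\va,\vc]\cup[\vc,\vb]$ the body $K$ has a genuine vertex at $\vc$, whose two incident edges contain $[\va,\vc]$ and $[\vc,\vb]$ and have outer normals $\vn_1,\vn_2\in\partial K^\circ$ with $\vn_1\cdot\va=\vn_1\cdot\vc=1$, $\vn_2\cdot\vc=\vn_2\cdot\vb=1$, $\vn_1\ne\vn_2$; the face of $K^\circ$ exposed by $\vc$ is the segment $[\vn_1,\vn_2]$, and $\vc\cdot\vc^\circ=1$ gives $\vc^\circ\in[\vn_1,\vn_2]$. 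If $\vc^\circ=\vn_1$ then $p=\va\cdot\vc^\circ=1$ and if $\vc^\circ=\vn_2$ then $p=\vb\cdot\vc^\circ=1$, both contradicting $p<1$, so $\vc^\circ=\lambda\vn_1+(1-\lambda)\vn_2$ with $\lambda\in(0,1)$. On the dual side, $q<1$ gives $\vc^\circ\ne\va^\circ$, so $[\va^\circ,\vc^\circ]$ is a non-degenerate segment of $\partial K^\circ$ and therefore lies on an edge of $K^\circ$ whose outer normal $\vw\in\partial K$ satisfies $\vw\cdot\va^\circ=\vw\cdot\vc^\circ=1$. From $1=\vw\cdot\vc^\circ=\lambda\,\vw\cdot\vn_1+(1-\lambda)\,\vw\cdot\vn_2$ and $\vw\cdot\vn_i\le 1$ we get $\vw\cdot\vn_1=\vw\cdot\vn_2=1$, so $\vw$ lies on both edges of $K$ incident to $\vc$, which meet only at $\vc$; hence $\vw=\vc$ and $q=\va^\circ\cdot\vc=\va^\circ\cdot\vw=1$, a contradiction. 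This proves $p=1$ or $q=1$.

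To conclude the direct implication, suppose $q=1$ (the case $p=1$ being symmetric). Then $\va^\circ,\vb^\circ,\vc^\circ$ all lie on the line $\{y:y\cdot\vc=1\}$, which does not pass through $o$; together with the sidedness conditions on $\vc^\circ$ and with $\det(\va^\circ,\vb^\circ)=2\abs{L^\circ}>0$ (since $q=1$), whence $0<\angle\va^\circ o\vb^\circ<\pi$, this forces $\vc^\circ\in[\va^\circ,\vb^\circ]$, so $L^\circ=o*[\va^\circ,\vb^\circ]=\conv\{o,\va^\circ,\vb^\circ\}$. Then $2\abs{L^\circ}=\det(\va^\circ,\vb^\circ)=J\va^\circ\cdot\vb^\circ$ yields $\vc=\frac{1}{J\va^\circ\cdot\vb^\circ}J(\va^\circ-\vb^\circ)$, and inserting this into $\vc\cdot\vc^\circ=1$ with $\vc^\circ=\frac{1}{2\abs{L}}J(\va-\vb)$ gives the stated expression for $\vc^\circ$; this is exactly alternative (i), and the case $p=1$ gives (ii). For the converse, assuming (i) I would just compute $\abs{L^\circ}=\tfrac12\,J\va^\circ\cdot\vb^\circ$ (a triangle with $0<\angle\va^\circ o\vb^\circ<\pi$) and $\abs{L}=\tfrac12\big(\det(\va,\vc)+\det(\vc,\vb)\big)=\tfrac12\,\vc\cdot J(\va-\vb)=\dfrac{(\va-\vb)\cdot(\va^\circ-\vb^\circ)}{2\,J\va^\circ\cdot\vb^\circ}$ from the given $\vc$, so that $\abs{L}\,\abs{L^\circ}=\tfrac14(\va-\vb)\cdot(\va^\circ-\vb^\circ)$; alternative (ii) is identical.

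The delicate point throughout is the contradiction step: one must argue carefully that $\vc$ is a vertex of $K$ in the strict sense (not merely of the radial arc), with normal cone exactly $[\vn_1,\vn_2]$, and that a boundary point of $K$ dual to $\vc^\circ$ must coincide with $\vc$ once $\vc^\circ$ lies in the relative interior of this edge of $\partial K^\circ$; this is the vertex/edge polar duality, to be carried out by hand since $K$ is not assumed smooth or strictly convex. It is precisely here that the asymmetry between $\va\cdot\va^\circ=\vb\cdot\vb^\circ=1$ and $\va\cdot\vb^\circ<1$, $\vb\cdot\va^\circ<1$ is used, through the inequalities $p\le1$, $q\le1$ and the trichotomy $\vc^\circ\in\{\vn_1\}$, $\{\vn_2\}$, or $(\vn_1,\vn_2)$; without those hypotheses the ``quadrilateral/quadrilateral'' equality configuration is genuinely realizable.
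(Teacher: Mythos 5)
Your proof is correct and takes essentially the same route as the paper's: both rest on the equality case of Lemma \ref{lem:8} together with vertex--edge polar duality in the plane, and your scalars $p,q$ are exactly the quantities $\det(\va,\vb)/(2\abs{L})$ and $\det(\va^\circ,\vb^\circ)/(2\abs{L^\circ})=\vc\cdot\va^\circ=\vc\cdot\vb^\circ$ that the paper computes. The only difference is organizational: the paper splits directly on whether $\vc$ is a vertex of $K$ and reads off $q=1$ from the fact that the dual edge must contain $[\va^\circ,\vc^\circ]$ or $[\vc^\circ,\vb^\circ]$, whereas you reach the same dichotomy ($p=1$ or $q=1$) by contradiction via the auxiliary normal $\vw$; both arguments are valid.
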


\begin{proof}
Suppose that the equality of \eqref{2Dineq} holds.
Note that
\begin{equation*}
\vc \cdot J\va
=\frac{1}{2\abs{L^\circ}}J(\va^\circ-\vb^\circ) \cdot J\va
=\frac{1}{2\abs{L^\circ}}(\va \cdot \va^\circ -\va \cdot \vb^\circ) >0,
\end{equation*}
which yields $\vc \neq \va$.
Similarly, $\vc \neq \vb$, $\vc^\circ \neq \va^\circ$, and $\vc^\circ \neq \vb^\circ$ hold.
By Lemma \ref{lem:8}, we have
\begin{equation*}
\vc \cdot \vc^\circ=1, \quad L=o*([\va, \vc] \cup [\vc, \vb]), \quad L^\circ=o*([\va^\circ, \vc^\circ] \cup [\vc^\circ, \vb^\circ]).
\end{equation*}

If $\vc$ is a vertex of $K$, then the polar dual of $\vc$ is an edge of $K^\circ$ which contains $\vc^\circ$.
The edge contains $[\va^\circ,\vc^\circ]$ or $[\vc^\circ,\vb^\circ]$.
We may assume that it contains $[\va^\circ,\vc^\circ]$.
In particular, $\vc \cdot \va^\circ=1$.
On the other hand, observe that
\begin{equation*}
\vc \cdot \va^\circ
=\frac{1}{2\abs{L^\circ}}(J\va^\circ-J\vb^\circ) \cdot \va^\circ
=\frac{1}{2\abs{L^\circ}}(-J\vb^\circ) \cdot \va^\circ
=\frac{1}{2\abs{L^\circ}}
\det(\va^\circ,\vb^\circ).
\end{equation*}
And a similar calculation for $\vc \cdot \vb^\circ$ yields that $\vc \cdot \va^\circ=\vc \cdot \vb^\circ$.
Consequently, $\vc \cdot \vb^\circ=\vc \cdot \va^\circ=1$, and so the polar of $\vc$ also contains $[\vc^\circ,\vb^\circ]$.
This shows that $\vc^\circ$ is a point on $(\va^\circ,\vb^\circ)$, since $\vc^\circ \neq \va^\circ$ and $\vc^\circ \neq \vb^\circ$.
In this case, $L^\circ=o*([\va^\circ, \vc^\circ] \cup [\vc^\circ, \vb^\circ])$ is a triangle with $0< \angle \va^\circ o \vb^\circ < \pi$, 
that is, $L^\circ=\conv\{o,\va^\circ,\vb^\circ\}$.
Hence we have $2\abs{L^\circ}=\va^\circ \cdot (-J\vb^\circ)=J\va^\circ \cdot \vb^\circ$ and
\begin{equation*}
\vc = \frac{1}{2\abs{L^\circ}}J(\va^\circ-\vb^\circ)
= \frac{1}{J\va^\circ \cdot \vb^\circ}J(\va^\circ-\vb^\circ),
\end{equation*}
which yields that
\begin{equation*}
2\abs{L}=\vc \cdot J(\va-\vb)=\frac{1}{J\va^\circ \cdot \vb^\circ}(\va^\circ-\vb^\circ) \cdot (\va-\vb),
\end{equation*}
since the equality in \eqref{2Dineq2} holds for $\vx=\vc$.
Thus, we obtain the expression of $\vc^\circ$ in (i).

Next, if $\vc$ is not a vertex of $K$, then we obtain $L=\conv\{o,\va,\vb\}$ with $0< \angle \va o \vb < \pi$.
Then $\vc \in (\va,\vb)$, since $\vc \neq \va$ and $\vc \neq \vb$.
The polar of the edge of $K$ containing $[\va,\vb]$ is, by definition, the vector on $\partial K^\circ$ represented by $k J(\va-\vb)$, where $k$ is a positive constant.
That is nothing but $\vc^\circ$.
Hence, $\vc^\circ$ is a vertex of $K^\circ$.
Since $\vc^\circ \neq \va^\circ$ and $\vc^\circ \neq \vb^\circ$, $L^\circ=o*([\va^\circ, \vc^\circ] \cup [\vc^\circ, \vb^\circ])$ must be a quadrangle.
The expressions of $\vc^\circ$ and $\vc$ in (ii) are obtained by the same calculation as the previous case.
\end{proof}

Lemma \ref{lem:3} enables us to extend \cite{IS2}*{Proposition 3.6} to the case that $\va \parallel \va^\circ$ and $\vb \parallel \vb^\circ$ do not hold necessarily.
Let $H \subset \R^3$ be an oriented plane with $o \in H$ and $\Pi_H$ denotes the orthogonal projection from $\R^3$ onto $H$.
We take two vectors $\va, \vb \in \R^3 \setminus \{o\}$ such that $\Pi_H \va$, $\Pi_H \vb \in H$ are not zero and not positively proportional.
For any $K \in \mathcal{K}$ with $o \in \mathrm{int}(K)$, we condider the boundary $\partial \Pi_H(K)$ of the convex body $\Pi_H(K) \subset H$, oriented counterclockwise. 
Let $\hat{\va}$ (resp.\ $\hat{\vb}$) be the intersection of $\partial \Pi_H(K)$ and the half line $\R_+(\Pi_H \va)$ (resp.\ $\R_+(\Pi_H \vb)$).
Then, the symbol
\begin{equation*}
\mathcal{C}_K(H; \va,\vb)
\end{equation*}
denotes the locus which moves from $\hat{\va}$ to $\hat{\vb}$ with the same orientation of $\partial \Pi_H(K)$.

\begin{remark}
\label{rem:C}
In what follows, whenever $\va \nparallel \vb$, we equip the orientation with $H:=\Span \{\va, \vb \} \subset \R^3$ such that $\va, \vb, \va \times \vb$ form the right-hand system.
Moreover, if $K \in \mathcal{K}$ with $o \in \mathrm{int}(K)$ satisfies that $K \cap H=\Pi_H(K)$, then we have
\begin{equation*}
\mathcal{C}_K(H; \va,\vb) = \mathcal{C}_K(\va,\vb).
\end{equation*}
In general, this equality does not hold.
\end{remark}

\begin{lemma}
\label{lem:4}
Let $K \in \mathcal{K}$ and $\va, \vb \in \partial K$ with $\va \nparallel \vb$.
Assume that $\va^\circ, \vb^\circ \in \partial K^\circ$ satisfies $\va \cdot \va^\circ=\vb \cdot \vb^\circ=1$, $\va \cdot \vb^\circ<1$, and $\vb \cdot \va^\circ<1$.
Let $H \subset \R^3$ be the plane spanned by $\va$ and $\vb$, and $\Pi_H$ denotes the orthogonal projection onto $H$.
Moreover, $\vn$ denotes the unit normal vector of $H$ defined by $\va \times \vb/\norm{\va \times \vb}$.
Then setting
\begin{equation*}
L:=o*\mathcal{C}_K(\va,\vb),\quad 
L^\circ:=o*\mathcal{C}_{K^\circ}(H; \va^\circ, \vb^\circ) \subset H,
\end{equation*}
we have
\begin{equation*}
\abs{L} \, \abs{L^\circ} \geq \frac{1}{4}(2-\va \cdot \vb^\circ-\va^\circ \cdot \vb)
=\frac{1}{4}(2-\va \cdot \Pi_H\vb^\circ-\Pi_H\va^\circ \cdot \vb).
\end{equation*}
The equality holds if and only if either the following {\upshape (i)} or {\upshape (ii)} is satisfied$:$
\begin{enumerate}[\upshape (i)]
 \item
$L=o*([\va, \vc] \cup [\vc, \vb])$ and 
$L^\circ 
=\conv \left\{o, \Pi_H\va^\circ, \Pi_H\vb^\circ\right\}$ with $0< \angle (\Pi_H\va^\circ) o (\Pi_H\vb^\circ) < \pi$, where
\begin{equation*}
\begin{aligned}
\vc &= \frac{1}{(\vn \times \Pi_H\va^\circ) \cdot \Pi_H\vb^\circ}\vn \times \Pi_H(\va^\circ-\vb^\circ)
= \frac{1}{\det(\vn,\va^\circ,\vb^\circ)}\vn \times (\va^\circ-\vb^\circ), \\
\vc^\circ &= \frac{(\vn \times \Pi_H\va^\circ) \cdot \Pi_H\vb^\circ}{(\va-\vb) \cdot \Pi_H(\va^\circ-\vb^\circ)}\vn \times (\va-\vb)
= \frac{\det(\vn,\va^\circ,\vb^\circ)}{(\va-\vb) \cdot (\va^\circ-\vb^\circ)}\vn \times (\va-\vb).
\end{aligned}
\end{equation*}
 \item
$L=\conv \left\{o, \va, \vb\right\}$ and $L^\circ= o*([\Pi_H \va^\circ, \vc^\circ] \cup [\vc^\circ, \Pi_H \vb^\circ])$, where
\begin{equation*}
\begin{aligned}
\vc &= \frac{(\vn \times \va) \cdot \vb}{(\va-\vb) \cdot \Pi_H(\va^\circ-\vb^\circ)}\vn \times \Pi_H(\va^\circ-\vb^\circ)
= \frac{\det(\vn,\va,\vb)}{(\va-\vb) \cdot (\va^\circ-\vb^\circ)}\vn \times (\va^\circ-\vb^\circ), \\
\vc^\circ &= \frac{1}{\det(\vn,\va,\vb)}\vn \times (\va-\vb).
\end{aligned}
\end{equation*}
\end{enumerate}
\end{lemma}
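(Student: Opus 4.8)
The plan is to reduce Lemma~\ref{lem:4} to its two-dimensional counterpart, Lemma~\ref{lem:3}, applied inside the plane $H$, and to handle the passage from $\partial K$ to $\partial\Pi_H(K)$ by means of the projection $\Pi_H$ together with Lemma~\ref{lem:5}. First I would set $\widetilde K := \Pi_H(K) \subset H$, a plane convex body containing $o$ in its relative interior, and note that $\widetilde K^\circ$ (polar taken in $H$) satisfies $\widetilde K^\circ = K^\circ \cap H$; this is the standard fact that projection and section are dual operations. Since $\va,\vb$ lie in $H$ already, we have $\widehat{\va} = \rho_{\widetilde K}(\va)\va$ and $\widehat{\vb} = \rho_{\widetilde K}(\vb)\vb$, and the curve $\mathcal{C}_K(H;\va,\vb)$ is exactly the arc $\Marc{\widehat{\va}\widehat{\vb}}$ of $\partial\widetilde K$ traversed counterclockwise (using the orientation fixed by $\vn=\va\times\vb/\norm{\va\times\vb}$, cf.\ Remark~\ref{rem:C}). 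Likewise $\Pi_H\va^\circ, \Pi_H\vb^\circ \in H$; because $\va\cdot\va^\circ=1$ and $\va\in H$ we get $\va\cdot\Pi_H\va^\circ = \va\cdot\va^\circ = 1$, so $\Pi_H\va^\circ$ and $\Pi_H\vb^\circ$ are the natural candidates to play the role of $\va^\circ,\vb^\circ$ of Lemma~\ref{lem:3} relative to the pair $\widetilde K, \widetilde K^\circ$; moreover $\va\cdot\Pi_H\vb^\circ = \va\cdot\vb^\circ < 1$ and $\vb\cdot\Pi_H\va^\circ = \vb\cdot\va^\circ < 1$, so the strict inequalities hypothesized in Lemma~\ref{lem:3} are inherited. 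One must check that $\Pi_H\va^\circ,\Pi_H\vb^\circ$ genuinely lie on $\partial\widetilde K^\circ = \partial(K^\circ\cap H)$: since $\va^\circ\in\partial K^\circ$ there is $\vx\in\partial K$ with $\vx\cdot\va^\circ=1$, and projecting onto $H$ this becomes a supporting condition for $\widetilde K$ at $\Pi_H\vx$ — here I would invoke Lemma~\ref{lem:5}, which identifies $\Pi_H\circ\Lambda_K$ with $\Lambda_{\widetilde K}$ and hence says precisely that the projected polar point supports the projected body.

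With these identifications in place, $L = o*\mathcal{C}_K(\va,\vb)$ and $L^\circ = o*\mathcal{C}_{K^\circ}(H;\va^\circ,\vb^\circ)$ become, after the identification of $H$ with $\R^2$, the solids called $L$ and $L^\circ$ in Lemma~\ref{lem:8}/\ref{lem:3} for the plane bodies $\widetilde K$ and $\widetilde K^\circ$ with boundary points $\widehat\va,\widehat\vb$ and $\Pi_H\va^\circ,\Pi_H\vb^\circ$. The key point is that $L$ is unchanged: $o*\mathcal{C}_K(\va,\vb)$ is, by its very definition, the two-dimensional region in $H$ bounded by the radial arc of $\partial(K\cap H)$ — no, more carefully, $\mathcal{C}_K(\va,\vb)\subset H$ is the radial arc of $K\cap H$, and $o*(\,\cdot\,)$ of it is a region in $H$; I would note $K\cap H \subset \Pi_H(K) = \widetilde K$, but the radial arc $\mathcal{C}_K(\va,\vb)$ between directions $\va,\vb$ is the same whether computed in $K$ or in $\widetilde K$ only when $K\cap H = \widetilde K$, which is not assumed. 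The correct statement is simpler: $L = o*\mathcal{C}_K(\va,\vb)$ is a region of $H$ and we only ever use that $\widehat\va := \rho_{K}(\va)\va$... Let me instead directly define $L$ via the arc on $\partial(K\cap H)$; since the definition of $\mathcal{C}_K(\va,\vb)$ uses $\rho_K$ restricted to the plane spanned by $\va,\vb$, which is $\rho_{K\cap H}$, we do have $\mathcal{C}_K(\va,\vb) = \mathcal{C}_{K\cap H}(\va,\vb)$, so $L$ is the planar region for the body $K\cap H$. To match Lemma~\ref{lem:3} I would therefore apply it to the plane convex body $K\cap H$ on the $L$-side and to $\widetilde K^\circ = \Pi_H(K)^\circ$ on the $L^\circ$-side; these are different plane bodies, but that is exactly the generality of Lemma~\ref{lem:8}, which never assumes the two bodies are polar to each other — it only needs four boundary points and the reciprocity $\vc\cdot\vc^\circ\le 1$. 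The bilinear quantities match: $(\widehat\va - \widehat\vb)$ is not $(\va-\vb)$, so here one does need that the arc endpoints are $\va,\vb$ themselves, i.e.\ that $\va,\vb\in\partial K$, which is hypothesized; thus $\widehat\va=\va,\widehat\vb=\vb$, and similarly the natural polar points to use are honest boundary points of $K^\circ$ projected into $H$. The right-hand side $\tfrac14(\va-\vb)\cdot(\Pi_H\va^\circ-\Pi_H\vb^\circ)$ equals $\tfrac14(\va-\vb)\cdot(\va^\circ-\vb^\circ)$ since $\va-\vb\in H$, and expanding using $\va\cdot\va^\circ=\vb\cdot\vb^\circ=1$ gives $\tfrac14(2-\va\cdot\vb^\circ-\vb\cdot\va^\circ)$, which is the claimed bound.

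For the equality characterization I would simply transcribe the two cases of Lemma~\ref{lem:3}, applied to the pair $(K\cap H,\ \Pi_H(K)^\circ)$ with boundary data $(\va,\vb)$ and $(\Pi_H\va^\circ,\Pi_H\vb^\circ)$, and then rewrite the resulting formulas for $\vc,\vc^\circ$ in three-dimensional language. The complex structure $J$ on $H\cong\R^2$ is, once $H$ is oriented by $\vn$, given by $J\vw = \vn\times\vw$ for $\vw\in H$; substituting $J(\cdot)\mapsto \vn\times(\cdot)$ into the expressions of Lemma~\ref{lem:3}(i),(ii) yields exactly the stated $\vc,\vc^\circ$, and the identities $J\va^\circ\cdot\vb^\circ = (\vn\times\Pi_H\va^\circ)\cdot\Pi_H\vb^\circ = \det(\vn,\va^\circ,\vb^\circ)$ and $J\va\cdot\vb = \det(\vn,\va,\vb)$ account for the two forms of each coefficient (using again that $\vn\perp H$ and $\va,\vb\in H$, so the full $3\times 3$ determinant equals the planar one). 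The conditions $\vc\cdot J\va\ge 0$ etc.\ from Lemma~\ref{lem:8} translate into the $0<\angle(\Pi_H\va^\circ)\,o\,(\Pi_H\vb^\circ)<\pi$ statements exactly as in the proof of Lemma~\ref{lem:3}.

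The main obstacle I anticipate is purely bookkeeping rather than conceptual: making the "section versus projection" duality $\Pi_H(K)^\circ = K^\circ\cap H$ and the identification of $\mathcal{C}_K(H;\va^\circ,\vb^\circ)$ with the corresponding arc of $\partial\Pi_H(K)^\circ$ completely watertight, and in particular verifying that $\Pi_H\va^\circ,\Pi_H\vb^\circ$ actually lie on $\partial\Pi_H(K)^\circ$ with the correct angular order — this is where Lemma~\ref{lem:5} does the real work, and where a sign or orientation slip would be easy to make. Once that dictionary is fixed, the inequality and both equality cases follow by direct substitution into Lemma~\ref{lem:3}; no genuinely new estimate is needed beyond what is already proved there.
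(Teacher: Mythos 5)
Your overall strategy --- reduce to Lemma \ref{lem:3} inside the plane $H$ --- is the same as the paper's, but you commit to the wrong dual pair, and this is a genuine gap rather than bookkeeping. You pair $K\cap H$ on the $L$-side with $\Pi_H(K)^\circ=K^\circ\cap H$ on the $L^\circ$-side. The body that actually appears in the definition of $L^\circ$ is $\Pi_H(K^\circ)$: by the paragraph preceding Remark \ref{rem:C}, $\mathcal{C}_{K^\circ}(H;\va^\circ,\vb^\circ)$ is an arc of $\partial\Pi_H(K^\circ)$, the boundary of the \emph{projection} of $K^\circ$, not of the section $K^\circ\cap H$. These differ in general ($K^\circ\cap H\subseteq\Pi_H(K^\circ)$, usually strictly), and $\Pi_H\va^\circ$ need not lie in $K^\circ$ at all, so your claim that $\Pi_H\va^\circ,\Pi_H\vb^\circ\in\partial(K^\circ\cap H)$ is false in general. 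The duality you need is ``polar of section equals projection of polar,'' i.e.\ $(K\cap H)^\circ=\Pi_H(K^\circ)$ in $H$, which is exactly the identification the paper makes; you invoked the other duality $\Pi_H(K)^\circ=K^\circ\cap H$ and then used the wrong body on the polar side.

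This matters because your fallback --- that Lemma \ref{lem:8} ``never assumes the two bodies are polar to each other'' --- is not true. Its proof produces $\vc^\circ=\frac{1}{2\abs{L}}J(\va-\vb)$ in the polar of the first body from the $L$-estimate, and $\vc=\frac{1}{2\abs{L^\circ}}J(\va^\circ-\vb^\circ)$ in the polar of the second body from the $L^\circ$-estimate, and then concludes with $\vc\cdot\vc^\circ\le 1$; that last step is precisely the mutual polarity of the two plane bodies. With your pair, the two estimates only place $\vc^\circ$ in $(K\cap H)^\circ=\Pi_H(K^\circ)$ and $\vc$ in $(K^\circ\cap H)^\circ=\Pi_H(K)$, which are not mutually polar, so $\vc\cdot\vc^\circ\le 1$ fails in general. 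Once you replace $K^\circ\cap H$ by $\Pi_H(K^\circ)$ throughout (and check $\Pi_H\va^\circ\in\partial\Pi_H(K^\circ)$ simply via $\va\cdot\Pi_H\va^\circ=\va\cdot\va^\circ=1$ with $\va\in K\cap H$ and $\Pi_H\va^\circ\in(K\cap H)^\circ$), the rest of your translation --- $J\vw=\vn\times\vw$, the determinant identities, and the expansion of $(\va-\vb)\cdot(\va^\circ-\vb^\circ)$ --- goes through exactly as in the paper.
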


\begin{proof}
By the assumptions, we obtain
\begin{equation*}
\va \cdot \Pi_H\va^\circ=\vb \cdot \Pi_H\vb^\circ=1,\quad \va \cdot \Pi_H\vb^\circ<1,\quad \vb \cdot \Pi_H\va^\circ<1,
\end{equation*}
which implies that $\Pi_H\va^\circ$, $\Pi_H\vb^\circ \in H$ are not positively proportional.
The polar of the convex body $K \cap H$ in $H \cong \R^2$ is $\Pi_H(K^\circ) \subset H$.
Note that $J \vx = \vn \times \vx$ for any vector $\vx \in H \subset \R^3$
 and $(\vn \times \vx)\cdot \vy = \det(\vn,\vx,\vy)$ for any vectors $\vx, \vy \in \R^3$.
Hence we can apply Lemma \ref{lem:3} to the pair $K \cap H$ and $\Pi_H(K^\circ)$ on $H$, and the results hold.
\end{proof}

\begin{remark}
Under the assumption of Lemma \ref{lem:4}, it always holds that $0< \angle \va o \vb < \pi$.
However, the situation that $\angle \va^\circ o \vb^\circ \geq \pi$ can happen.
\end{remark}

\begin{lemma}
\label{lem:7}
Let $K \in \mathcal{K}$ and we take points $\va_1,\ldots,\va_m,\va^\circ_1,\ldots,\va^\circ_\ell \in \R^3 \setminus \{o\}$.
Assume that $\mathcal{C}:=\mathcal{C}_K(\va_1,\ldots,\va_m,\va_1)$ is a simple closed curve on $\partial K$
such that
\begin{equation*}
\mathcal{C} = \bigcup_{i=1}^m [\va_i, \va_{i+1}] \subset \partial K
\end{equation*}
with $\va_{m+1}:=\va_1$,
and also $\mathcal{C}^\circ:=\mathcal{C}_{K^\circ}(\va^\circ_1,\ldots,\va^\circ_\ell,\va^\circ_1)$ on $\partial K^\circ$ is.
Then we have
\begin{equation}
\label{3Dineq}
\abs{o*\mathcal{S}_K(\mathcal{C})} \, \abs{o*\mathcal{S}_{K^\circ}(\mathcal{C}^\circ)}
\geq \frac{1}{9} \, \overline{\mathcal{C}} \cdot \overline{\mathcal{C}^\circ}.
\end{equation}
Moreover, if the equality of \eqref{3Dineq} holds, then there exist positive constants $\tau$ and $\tau^\circ$ such that
\begin{equation*}
\begin{aligned}
\mathcal{S}_K(\mathcal{C}) &= (\tau \overline{\mathcal{C}^\circ})* \mathcal{C}, &
\det({\overline{\mathcal{C}^\circ}, \va_i, \va_{i+1}}) &\geq 0 \quad (i=1,\dots, m), \\
\mathcal{S}_{K^\circ}(\mathcal{C^\circ}) &= (\tau^\circ \overline{\mathcal{C}})* \mathcal{C}^\circ, &
\det({\overline{\mathcal{C}}, \va^\circ_j, \va^\circ_{j+1}}) &\geq 0 \quad (j=1,\dots, \ell). 
\end{aligned}
\end{equation*}
\end{lemma}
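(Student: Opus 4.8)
The plan is to lift the two–dimensional argument of Lemma~\ref{lem:8} one dimension up. The engine is the following identity, valid for every $\vx\in\R^3$: writing $\mathcal{S}:=\mathcal{S}_K(\mathcal{C})$ and letting $d\bm A$ be the area element of $\partial K$ along $\mathcal{S}$, oriented compatibly with $\mathcal{C}$ — which by convention is the outer orientation, so that $\abs{o*\mathcal{S}_K(\mathcal{C})}=\tfrac{1}{3}\int_{\mathcal{S}}\vy\cdot d\bm A$ and $\int_{\mathcal{S}}d\bm A=\overline{\mathcal{C}}$, the latter being the vector form of Stokes' theorem applied to $\mathcal{C}=\partial\mathcal{S}$ — one has
\begin{equation*}
\abs{o*\mathcal{S}_K(\mathcal{C})}-\tfrac{1}{3}\,\vx\cdot\overline{\mathcal{C}}
=\tfrac{1}{3}\int_{\mathcal{S}}\vy\cdot d\bm A-\tfrac{1}{3}\,\vx\cdot\int_{\mathcal{S}}d\bm A
=\tfrac{1}{3}\int_{\mathcal{S}}(\vy-\vx)\cdot d\bm A .
\end{equation*}
When $\vx\in K$ the integrand $(\vy-\vx)\cdot d\bm A$ is pointwise nonnegative, since at almost every $\vy\in\partial K$ the element $d\bm A(\vy)$ points along an outer normal and $\vx$ lies in the corresponding supporting half-space. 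Hence
\begin{equation*}
\abs{o*\mathcal{S}_K(\mathcal{C})}\ \ge\ \tfrac{1}{3}\,\vx\cdot\overline{\mathcal{C}}\qquad(\vx\in K).
\end{equation*}
(Note $\abs{o*\mathcal{S}_K(\mathcal{C})}>0$ and $\abs{o*\mathcal{S}_{K^\circ}(\mathcal{C}^\circ)}>0$, these being cones from the interior point $o$ over non-degenerate boundary pieces.)

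The last display says precisely that $\vc^\circ:=\overline{\mathcal{C}}/\bigl(3\abs{o*\mathcal{S}_K(\mathcal{C})}\bigr)\in K^\circ$, and the same reasoning applied to $K^\circ$, $\mathcal{C}^\circ$ (using $K^{\circ\circ}=K$) gives $\vc:=\overline{\mathcal{C}^\circ}/\bigl(3\abs{o*\mathcal{S}_{K^\circ}(\mathcal{C}^\circ)}\bigr)\in K$. Since $\vc\in K$ and $\vc^\circ\in K^\circ$ we get $\vc\cdot\vc^\circ\le 1$, which after clearing denominators is exactly \eqref{3Dineq} (the statement being vacuous if $\overline{\mathcal{C}}\cdot\overline{\mathcal{C}^\circ}<0$).

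Suppose now equality holds in \eqref{3Dineq}. Then $\vc\cdot\vc^\circ=1$, so the inequality above is an equality at $\vx=\vc$, that is $\int_{\mathcal{S}}(\vy-\vc)\cdot d\bm A=0$; being a nonnegative measure this integrand vanishes, so the supporting hyperplane of $K$ at almost every, hence (by density and the closedness of $\partial K$) every $\vy\in\mathcal{S}_K(\mathcal{C})$ passes through $\vc$. Consequently $[\vc,\vy]\subset\partial K$ for every $\vy\in\mathcal{S}_K(\mathcal{C})$; as $\partial K$ has empty interior and $\partial\mathcal{S}_K(\mathcal{C})=\bigcup_{i}[\va_i,\va_{i+1}]$, this forces $\mathcal{S}_K(\mathcal{C})=\vc*\mathcal{C}=\bigcup_{i=1}^{m}\conv\{\vc,\va_i,\va_{i+1}\}$. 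Putting $\tau:=1/\bigl(3\abs{o*\mathcal{S}_{K^\circ}(\mathcal{C}^\circ)}\bigr)>0$ we have $\vc=\tau\,\overline{\mathcal{C}^\circ}$, whence $\mathcal{S}_K(\mathcal{C})=(\tau\,\overline{\mathcal{C}^\circ})*\mathcal{C}$; and compatibility of the orientation of $\mathcal{S}_K(\mathcal{C})$ with $\mathcal{C}$ together with $o\in\interior K$ places $o$ on the inner side of each face $\conv\{\vc,\va_i,\va_{i+1}\}$, i.e. $\det(\vc,\va_i,\va_{i+1})\ge 0$, equivalently $\det(\overline{\mathcal{C}^\circ},\va_i,\va_{i+1})\ge 0$. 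The symmetric argument with $K$ and $K^\circ$ interchanged gives $\mathcal{S}_{K^\circ}(\mathcal{C}^\circ)=(\tau^\circ\,\overline{\mathcal{C}})*\mathcal{C}^\circ$ with $\tau^\circ:=1/\bigl(3\abs{o*\mathcal{S}_K(\mathcal{C})}\bigr)>0$ and $\det(\overline{\mathcal{C}},\va^\circ_j,\va^\circ_{j+1})\ge 0$; conversely, if these conditions hold one verifies by direct computation that all the inequalities become equalities. The step demanding the most care is this last one: turning the pointwise relation $(\vy-\vc)\cdot d\bm A=0$ on $\mathcal{S}_K(\mathcal{C})$ into the global identity $\mathcal{S}_K(\mathcal{C})=\vc*\mathcal{C}$ for a convex body whose boundary need not be smooth, and pinning down the sign conventions so that the determinant inequalities come out as stated.
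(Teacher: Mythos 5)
Your proof is correct and follows essentially the same route as the paper's: both construct $\vc^\circ=\overline{\mathcal{C}}/(3\abs{o*\mathcal{S}_K(\mathcal{C})})\in K^\circ$ and $\vc=\overline{\mathcal{C}^\circ}/(3\abs{o*\mathcal{S}_{K^\circ}(\mathcal{C}^\circ)})\in K$ and conclude from $\vc\cdot\vc^\circ\le 1$, then analyze equality at $\vx=\vc$. The only (inessential) difference is that you justify the key inequality $\tfrac13\vx\cdot\overline{\mathcal{C}}\le\abs{o*\mathcal{S}_K(\mathcal{C})}$ by a surface-integral/Stokes argument with pointwise nonnegativity of $(\vy-\vx)\cdot d\bm A$, whereas the paper writes $\overline{\mathcal{C}}=\tfrac12\sum_i\va_i\times\va_{i+1}$ and compares the sum of signed tetrahedron volumes $\tfrac16\sum_i\det(\vx,\va_i,\va_{i+1})$ with $\abs{o*\mathcal{S}_K(\mathcal{C})}$.
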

\begin{proof}
By definition, $\overline{\mathcal{C}}=(1/2)\sum_{i=1}^m \va_i \times \va_{i+1}$.
For any point $\vx \in K$ we have
\begin{equation}
\label{3Dineq2}
\frac{1}{3} \vx \cdot \overline{\mathcal{C}}
= \frac{1}{6}\sum_{i=1}^m \vx \cdot (\va_i \times \va_{i+1})
= \frac{1}{6}\sum_{i=1}^m \det({\vx, \va_i, \va_{i+1}})
\leq \abs{o*\mathcal{S}_K(\mathcal{C})},
\end{equation}
which means that
\begin{equation*}
\vc^\circ:=
\frac{1}{3\abs{o*\mathcal{S}_K(\mathcal{C})}} \, \overline{\mathcal{C}} \in K^\circ.
\end{equation*}
Similarly, we obtain
\begin{equation*}
\vc:=
\frac{1}{3\abs{o*\mathcal{S}_{K^\circ}(\mathcal{C}^\circ)}} \, \overline{\mathcal{C}^\circ} \in K.
\end{equation*}
Since $\vc^\circ \cdot \vc \leq 1$, the inequality \eqref{3Dineq} holds.

If the equality of \eqref{3Dineq} holds, then the equality in \eqref{3Dineq2} holds for $\vx=\vc$,
which implies that $\vc \in \partial K$, $o*\mathcal{S}_K(\mathcal{C})=(o*\mathcal{C})* \vc=o*(\vc * \mathcal{C})$ and $\det({\vc, \va_i, \va_{i+1}}) \geq 0$ for all $i$.
Hence, $\mathcal{S}_K(\mathcal{C})=\vc * \mathcal{C}$.
The argument for $o*\mathcal{S}_{K^\circ}(\mathcal{C^\circ})$ is the same.
\end{proof}

\section{The case with $D_2$-symmetry}
\label{sec:3}

\subsection{Settings and preliminary calculations}

Without loss of generality, we may assume that the subgroup $D_2 \subset SO(3)$ consists of the following four elements:
\begin{equation*}
E:=
\begin{pmatrix}
1 & 0 & 0 \\
0 & 1 & 0 \\
0 & 0 & 1
\end{pmatrix}, \quad
R_{23}:=
\begin{pmatrix}
1 & 0 & 0 \\
0 & -1 & 0 \\
0 & 0 & -1
\end{pmatrix}, \quad
R_{13}:=
\begin{pmatrix}
-1 & 0 & 0 \\
0 & 1 & 0 \\
0 & 0 & -1
\end{pmatrix}, \quad
R_{12}:=
\begin{pmatrix}
-1 & 0 & 0 \\
0 & -1 & 0 \\
0 & 0 & 1
\end{pmatrix}
.
\end{equation*}
Let $K \in \checkK^3(D_2)$. 
For positive real numbers $a$, $b$, and $c$, we put 
\begin{equation*}
\vv_{---}:=
\begin{pmatrix}
-a \\ -b \\ -c
\end{pmatrix}, \quad
\vv_{-++}:=
\begin{pmatrix}
-a \\ +b \\ +c
\end{pmatrix}, \quad
\vv_{+-+}:=
\begin{pmatrix}
+a \\ -b \\ +c
\end{pmatrix}, \quad
\vv_{++-}:=
\begin{pmatrix}
+a \\ +b \\ -c
\end{pmatrix}.
\end{equation*}
These are vertices of a $3$-simplex satisfying
\begin{equation}
\label{eq:2}
\vv_{-++} = R_{23} \vv_{---}, \quad
\vv_{+-+} = R_{13} \vv_{---}, \quad
\vv_{++-} = R_{12} \vv_{---}.
\end{equation}

We can divide $K$ into the four parts
\begin{equation*}
\begin{aligned}
K_0 &:= K \cap \pos(\vv_{-++}, \vv_{+-+}, \vv_{++-}), &
R_{23} K_0 &= K \cap \pos(\vv_{---}, \vv_{++-}, \vv_{+-+}), \\
R_{13} K_0 &= K \cap \pos(\vv_{++-}, \vv_{---}, \vv_{-++}), &
R_{12} K_0 &= K \cap \pos(\vv_{+-+}, \vv_{-++}, \vv_{---}).
\end{aligned}
\end{equation*}
Since they are congruent, by the volume estimate (Lemma \ref{lem:1}), we have
\begin{equation}
\label{eq:39}
\begin{aligned}
& \abs{K} \, \abs{K^\circ} = 16 \abs{K_0} \, \abs{\Lambda(K_0)} \\
&\geq
\frac{16}{9}
\left(
\overline{\mathcal{C}(\vv_{+-+}, \vv_{++-})}
+
\overline{\mathcal{C}(\vv_{++-},\vv_{-++})}
+
\overline{\mathcal{C}(\vv_{-++}, \vv_{+-+})}
\right) \\
&\quad \cdot
\left(
\overline{\Lambda (\mathcal{C}(\vv_{+-+}, \vv_{++-}))}
+
\overline{\Lambda (\mathcal{C}(\vv_{++-},\vv_{-++}))}
+
\overline{\Lambda (\mathcal{C}(\vv_{-++}, \vv_{+-+}))}
\right).
\end{aligned}
\end{equation}
We note that
\begin{equation*}
\overline{\mathcal{C}(\vv_{+-+}, \vv_{++-})}
\parallel
\vv_{+-+} \times \vv_{++-}, \quad
\vv_{+-+} \times \vv_{++-}
=
\begin{pmatrix}
0 \\ 2ca \\ 2ab
\end{pmatrix}
\end{equation*}
and obtain
\begin{equation*}
\begin{aligned}
\overline{\mathcal{C}(\vv_{+-+}, \vv_{++-})}
&= \frac{|o*\mathcal{C}(\vv_{+-+}, \vv_{++-})|}{\sqrt{c^2a^2+a^2b^2}}
\begin{pmatrix}
0 \\ ca \\ ab
\end{pmatrix}, \quad
\overline{\mathcal{C}(\vv_{++-},\vv_{-++})}
= \frac{|o*\mathcal{C}(\vv_{++-},\vv_{-++})|}{\sqrt{b^2c^2+a^2b^2}}
\begin{pmatrix}
bc \\ 0 \\ ab
\end{pmatrix}, \\
\overline{\mathcal{C}(\vv_{-++}, \vv_{+-+})}
&= \frac{|o*\mathcal{C}(\vv_{-++}, \vv_{+-+})|}{\sqrt{b^2c^2+c^2a^2}}
\begin{pmatrix}
bc \\ ca \\ 0
\end{pmatrix}
\end{aligned}
\end{equation*}
(see \cite[(5)]{IS2}).
Now we choose $a$, $b$, and $c$ which satisfy that 
\begin{equation}
\label{eq:1}
S_K:=
\frac{|o*\mathcal{C}(\vv_{+-+}, \vv_{++-})|}{\sqrt{c^2a^2+a^2b^2}}=
\frac{|o*\mathcal{C}(\vv_{++-},\vv_{-++})|}{\sqrt{b^2c^2+a^2b^2}}=
\frac{|o*\mathcal{C}(\vv_{-++}, \vv_{+-+})|}{\sqrt{b^2c^2+c^2a^2}}
\end{equation}
and show the inequality of Theorem \ref{thm:1}.
Indeed, the following proposition holds.
\begin{proposition}
\label{prop:1}
For any $K \in \checkK^3(D_2)$ there exist positive constants $a$, $b$, and $c$ satisfying the condition \eqref{eq:1}.
\end{proposition}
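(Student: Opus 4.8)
The plan is to read \eqref{eq:1} as a two-dimensional balancing (``equipartition'') problem and to solve it by a Knaster--Kuratowski--Mazurkiewicz (KKM) argument. Since $\vv_{---}=-\operatorname{diag}(a,b,c)\,(1,1,1)^{\mathrm{T}}$, prescribing the triple $(a,b,c)$ is the same as prescribing a positive diagonal linear map, and these are exactly the orientation-preserving linear maps commuting with $D_2$; moreover the three fractions in \eqref{eq:1} depend only on the rays $\R_+\vv_{+-+},\R_+\vv_{++-},\R_+\vv_{-++}$, hence only on $[a:b:c]$. So I would fix $a+b+c=1$, write $\Delta^\circ:=\{(a,b,c):a,b,c>0,\ a+b+c=1\}$, and regard on $\Delta^\circ$ the three positive continuous functions
\begin{align*}
A_1 &:= \frac{\abs{o*\mathcal{C}_K(\vv_{+-+},\vv_{++-})}}{a\sqrt{b^2+c^2}}, \\
A_2 &:= \frac{\abs{o*\mathcal{C}_K(\vv_{++-},\vv_{-++})}}{b\sqrt{c^2+a^2}}, \\
A_3 &:= \frac{\abs{o*\mathcal{C}_K(\vv_{-++},\vv_{+-+})}}{c\sqrt{a^2+b^2}}
\end{align*}
(using $\sqrt{c^2a^2+a^2b^2}=a\sqrt{b^2+c^2}$, and cyclically). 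Condition \eqref{eq:1} is precisely $A_1=A_2=A_3$; continuity on $\Delta^\circ$ holds because there the vectors $\vv_{+-+},\vv_{++-},\vv_{-++}$ are nonzero and pairwise non-parallel.

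The crucial input is the boundary behaviour of the $A_i$ on $\partial\Delta$, where $\Delta$ has vertices $V_1=(1,0,0)$, $V_2=(0,1,0)$, $V_3=(0,0,1)$. First, $A_i\to+\infty$ as one approaches the edge of $\Delta$ opposite $V_i$. Indeed, as $a\to0$ the vectors $\vv_{+-+}=(a,-b,c)$ and $\vv_{++-}=(a,b,-c)$ tend to an antipodal pair lying in the ($a$-independent) plane $\{cy+bz=0\}$, so $\mathcal{C}_K(\vv_{+-+},\vv_{++-})$ opens up to one half of the planar convex curve $\partial K\cap\{cy+bz=0\}$ and $\abs{o*\mathcal{C}_K(\vv_{+-+},\vv_{++-})}$ stays bounded below by a positive constant, while $a\sqrt{b^2+c^2}\to0$; hence $A_1\to+\infty$ near the edge $[V_2,V_3]=\{a=0\}$. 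The cases of $A_2$ and $A_3$ follow by permuting the indices. Second, near the vertex $V_1$ the two vectors $\vv_{+-+},\vv_{++-}$ collapse to the single ray $\R_+(1,0,0)$, so $\mathcal{C}_K(\vv_{+-+},\vv_{++-})$ is confined to a circular sector of half-angle $O(\sqrt{b^2+c^2})$, giving $\abs{o*\mathcal{C}_K(\vv_{+-+},\vv_{++-})}=O(\sqrt{b^2+c^2})$ and hence $A_1$ bounded near $V_1$; meanwhile $A_2,A_3\to+\infty$ there, since the two edges of $\Delta$ through $V_1$ are opposite $V_2$ and $V_3$. Thus near each vertex $V_i$ exactly two of the $A_j$ blow up and $A_i$ is the minimum.

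Granting this, I would conclude as follows. Set $S_i:=\{p\in\Delta^\circ:A_i(p)\le A_j(p)\ (j=1,2,3)\}$ and let $C_i$ be the closure of $S_i$ in the compact triangle $\Delta$. Then $C_1\cup C_2\cup C_3=\Delta$ (since $S_1\cup S_2\cup S_3=\Delta^\circ$ is dense), $V_i\in C_i$ (near $V_i$ the other two functions are large, so $A_i$ attains the minimum there), and the edge $[V_i,V_j]$ lies in $C_i\cup C_j$ (on it $A_k$ is large, so the minimum is $A_i$ or $A_j$). The closed-covering KKM lemma yields a point $p_0\in C_1\cap C_2\cap C_3$. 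The same blow-up estimates force $p_0\in\Delta^\circ$: if $p_0$ were on an open edge, respectively at a vertex, then a whole neighbourhood of $p_0$ would avoid one of the $S_i$, contradicting $p_0\in C_i$. Since each $S_i$ is relatively closed in $\Delta^\circ$ we have $C_i\cap\Delta^\circ=S_i$, so $p_0\in S_1\cap S_2\cap S_3$, i.e.\ $A_1(p_0)=A_2(p_0)=A_3(p_0)$; any representative $(a,b,c)$ of $p_0$ then satisfies \eqref{eq:1}.

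The step I expect to be the main obstacle is making the boundary behaviour in the second paragraph precise and uniform: bounding $\abs{o*\mathcal{C}_K(\va,\vb)}$ from below near an open edge of $\Delta$, bounding it by the genuine rate $O(\sqrt{b^2+c^2})$ near a vertex, and checking that the relevant limit is independent of the path of approach. These amount to elementary continuity and compactness statements about the ``pie-slice'' areas $\abs{o*\mathcal{C}_K(\va,\vb)}=\tfrac12\int\rho_{K\cap H}^2$ as the pair $(\va,\vb)$ degenerates, and they use only that $\rho_K$ is continuous with $o\in\Int K$ (the strong convexity and $C^\infty$-smoothness of $K\in\checkK$ are not needed here). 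As an alternative to KKM one could argue by topological degree: the map $(A_2-A_1,\,A_3-A_1)\colon\Delta^\circ\to\R^2$ is proper by the same estimates, extends to a self-map of $S^2$ of degree $\pm1$, hence is surjective; but the KKM phrasing is cleaner since it avoids analyzing this map at the vertices of $\Delta$.
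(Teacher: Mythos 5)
Your proposal is correct and follows essentially the same strategy as the paper: both reduce \eqref{eq:1} to a topological equipartition problem on the two-dimensional simplex of ratios $[a:b:c]$, driven by the same estimates (from $rB\subset K\subset r^{-1}B$ and the arc formula for the ball) showing that the $i$-th normalized area blows up near the facet where the $i$-th coordinate vanishes and stays controlled away from it. The only difference is the topological packaging --- you apply the KKM lemma on the closed simplex, which requires the uniform boundary asymptotics you flag as the main remaining work, whereas the paper reaches the same conclusion by computing a winding number of the map built from $(S_1,S_2,S_3)$ and the cube roots of unity on the boundary of a small compact triangle $T=\conv\{(1,\epsilon,\epsilon),(\epsilon,1,\epsilon),(\epsilon,\epsilon,1)\}$, thereby keeping every estimate strictly inside the open octant.
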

We give the proof in Section \ref{sec:3.3}.
Owing to the equipartitional condition \eqref{eq:1}, we can transform the right-hand side of \eqref{eq:39} into a computable form as in \eqref{eq:37} below.
There, the cross terms of \eqref{eq:39} do not appear.

\subsection{Proof of the inequality}

\begin{proposition}
\label{prop:2}
For any $K \in \checkK^3(D_2)$, the inequality in Theorem \ref{thm:1} holds.
\end{proposition}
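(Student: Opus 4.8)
The plan is to combine the equipartition from Proposition \ref{prop:1} with the pointwise signed‑volume estimate of Lemma \ref{lem:2}. By Proposition \ref{prop:5} it suffices to treat $K \in \checkK^3(D_2)$. Fix $a,b,c>0$ satisfying \eqref{eq:1}, so that the three vectors $\overline{\mathcal{C}(\vv_{+-+}, \vv_{++-})}$, $\overline{\mathcal{C}(\vv_{++-},\vv_{-++})}$, $\overline{\mathcal{C}(\vv_{-++}, \vv_{+-+})}$ equal $S_K$ times $(0,ca,ab)^{\mathrm T}$, $(bc,0,ab)^{\mathrm T}$, $(bc,ca,0)^{\mathrm T}$ respectively. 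The point of \eqref{eq:1} is that the first factor on the right of \eqref{eq:39} becomes $S_K(bc,ca,ab)^{\mathrm T} \cdot 2$, a single clean vector, so I would start by expanding that inner product.

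First I would handle the polar factor. Using $D_2$‑equivariance (Lemma \ref{lem:6}), $\Lambda(\vv_{-++})=\Lambda(R_{23}\vv_{---})=R_{23}\Lambda(\vv_{---})$, and likewise for the other vertices; writing $\Lambda(\vv_{---})=:(a^\circ,b^\circ,c^\circ)^{\mathrm T}$ up to suitable signs, the three dual vertices $\Lambda(\vv_{-++}),\Lambda(\vv_{+-+}),\Lambda(\vv_{++-})$ are obtained by the same sign pattern as \eqref{eq:2}, and they all lie on $\partial K^\circ$ with $\vv_{\cdot}\cdot\Lambda(\vv_{\cdot})=1$. Then apply Lemma \ref{lem:2} to each of the three arcs $\mathcal{C}(\vv_{+-+},\vv_{++-})$, $\mathcal{C}(\vv_{++-},\vv_{-++})$, $\mathcal{C}(\vv_{-++},\vv_{+-+})$, bounding $\overline{\mathcal{C}}\cdot\overline{\Lambda(\mathcal{C})}$ from below by $\tfrac14(\va-\vb)\cdot(\Lambda(\va)-\Lambda(\vb))$ for the corresponding pair; summing these three, and because the first factor in \eqref{eq:39} is a positive multiple of the fixed vector $(bc,ca,ab)^{\mathrm T}$, I can pair each coordinate of that vector against the matching coordinate of $\sum\overline{\Lambda(\mathcal C)}$ and feed in the Lemma \ref{lem:2} bounds termwise. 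This is exactly the "cross terms do not appear" mechanism advertised before \eqref{eq:37}: after using \eqref{eq:1}, \eqref{eq:39} collapses to a sum of products of the form (coordinate of $(bc,ca,ab)$) $\times$ (a difference of coordinates of the $\Lambda(\vv)$'s). Carrying the algebra through should yield a bound of the shape $\tfrac{16}{9}\cdot\tfrac14\cdot(\text{quantity in }a,b,c,a^\circ,b^\circ,c^\circ)$, and the constraint $\vv\cdot\Lambda(\vv)=1$ forces $aa^\circ+bb^\circ+cc^\circ$ to a fixed value, reducing everything to a genuine inequality in finitely many positive reals.

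Next I would reduce that finite‑dimensional inequality to its sharp constant $64/9$. The natural route is AM–GM (or Cauchy–Schwarz): the expression in $a,b,c$ and $a^\circ,b^\circ,c^\circ$ should, after using the normalization $\va\cdot\Lambda(\va)=1$ together with the three analogous identities $\vv_{+-+}\cdot\Lambda(\vv_{++-})<1$ etc.\ that come for free from $o\in\Int K^\circ$, be minimized when all the "off‑diagonal" inner products are as large as possible, i.e.\ when the configuration degenerates to the simplex $\triangle$ (or its polar). I expect the clean statement to be that the bracketed quantity is $\geq 1$ with the factor $16/(9\cdot 4)\cdot 16 = 64/9$ appearing after the dust settles — matching $\abs{\triangle}\abs{\triangle^\circ}$. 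I would verify the constant by plugging in the explicit $\triangle$ with the standard $D_2$‑invariant placement (vertices $\vv_{-++}$ etc.), where $S_K$ and all arcs are segments, to confirm \eqref{eq:39} is tight there.

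\textbf{The main obstacle} will be the bookkeeping in the middle step: making sure the dot product in \eqref{eq:39}, after substitution of \eqref{eq:1}, really does separate so that Lemma \ref{lem:2} can be applied one arc at a time without leaving residual cross terms, and then that the resulting polynomial inequality in six positive variables actually has minimum $64/9$ subject only to the single linear constraint coming from $\va\cdot\Lambda(\va)=1$ type identities. There is a subtlety that $\Lambda(\vv_{+-+})$ need not be parallel to $\vv_{+-+}$, so the "diagonal" products $\vv_{+-+}\cdot\Lambda(\vv_{+-+})=1$ are the only equalities available while all mixed products are strict inequalities; I would need to check that the minimization over the mixed products is attained at the boundary (equalities), which is where the degenerate simplex configuration lives, and that this is consistent with convexity. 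Once the inequality is pinned down, Proposition \ref{prop:2} follows; the equality analysis is deferred to Section \ref{sec:3.4}.
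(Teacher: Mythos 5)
Your overall route is the paper's: equipartition via Proposition \ref{prop:1}, cancellation of the cross terms in \eqref{eq:39}, then Lemma \ref{lem:2} arc by arc. But there is a gap in the step you rely on most. The fact that the first factor of \eqref{eq:39} becomes the single vector $2S_K(bc,ca,ab)^{\mathrm{T}}$ does \emph{not} by itself let you ``pair coordinates termwise'': to replace
\[
2S_K(bc,ca,ab)^{\mathrm{T}}\cdot\overline{\Lambda(\mathcal{C}(\vv_{+-+},\vv_{++-}))}
\quad\text{by}\quad
2\,\overline{\mathcal{C}(\vv_{+-+},\vv_{++-})}\cdot\overline{\Lambda(\mathcal{C}(\vv_{+-+},\vv_{++-}))},
\]
which is what makes Lemma \ref{lem:2} applicable one arc at a time, you must know that the first component of $\overline{\Lambda(\mathcal{C}(\vv_{+-+},\vv_{++-}))}$ vanishes, since $\overline{\mathcal{C}(\vv_{+-+},\vv_{++-})}=S_K(0,ca,ab)^{\mathrm{T}}$ has a zero in that slot while the summed vector has the nonzero entry $bc$ there. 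This is where the symmetry genuinely enters: $R_{23}$ swaps $\vv_{+-+}$ and $\vv_{++-}$, hence carries that arc to its reversal, so $R_{23}\,\overline{\Lambda(\mathcal{C}(\vv_{+-+},\vv_{++-}))}=-\overline{\Lambda(\mathcal{C}(\vv_{+-+},\vv_{++-}))}$ and the first coordinate is forced to be $0$ (and cyclically for the other two arcs). Your invocation of equivariance only at the endpoints $\Lambda(\vv_{\cdot})$ does not supply this; without it the cross terms do not cancel and the termwise use of Lemma \ref{lem:2} is unjustified.

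The second issue is your final step: there is no finite-dimensional minimization and no AM--GM to be done. After normalizing $\vv_{---}\in\partial K$ and writing $-\Lambda\vv_{---}=(a^\circ,b^\circ,c^\circ)^{\mathrm{T}}$, Lemma \ref{lem:6} gives $\Lambda\vv_{+-+}-\Lambda\vv_{++-}=(0,-2b^\circ,2c^\circ)^{\mathrm{T}}$, so the three bounds from Lemma \ref{lem:2} are exactly $2bb^\circ+2cc^\circ$, $2aa^\circ+2cc^\circ$, $2aa^\circ+2bb^\circ$; their sum is $4(aa^\circ+bb^\circ+cc^\circ)=4$ by the single identity $\vv_{---}\cdot\Lambda\vv_{---}=1$, and $\abs{K}\,\abs{K^\circ}\geq\frac{16}{9}\cdot 4=\frac{64}{9}$ drops out. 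The mixed products $\vv_{+-+}\cdot\Lambda(\vv_{++-})$ that you propose to optimize over never appear in the lower bound, so the ``main obstacle'' you identify is not an obstacle at all; the genuine work in this proposition is the vanishing-coordinate argument above, together with Proposition \ref{prop:1} itself.
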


\begin{proof}
Let $K \in \checkK^3(D_2)$.
By Proposition \ref{prop:1}, there exist
$a,b,c>0$ satisfying the condition \eqref{eq:1}.
Then, it holds that
\begin{equation}
\label{eq:35}
\overline{\mathcal{C}(\vv_{+-+}, \vv_{++-})}
+
\overline{\mathcal{C}(\vv_{++-},\vv_{-++})}
+
\overline{\mathcal{C}(\vv_{-++}, \vv_{+-+})}
=
2 S_K
\begin{pmatrix}
bc \\ ca \\ ab
\end{pmatrix}.
\end{equation}
Putting
\begin{equation*}
\begin{pmatrix}
\alpha^\circ \\ \beta^\circ \\ \gamma^\circ
\end{pmatrix}
:=
\overline{\Lambda (\mathcal{C}(\vv_{+-+}, \vv_{++-}))},
\end{equation*}
since
\begin{equation*}
R_{23} \overline{\Lambda (\mathcal{C}(\vv_{+-+}, \vv_{++-}))}
=
\overline{\Lambda (\mathcal{C}(\vv_{++-}, \vv_{+-+}))}
=
-
\overline{\Lambda (\mathcal{C}(\vv_{+-+}, \vv_{++-}))},
\end{equation*}
we have $\alpha^\circ=0$. Hence we get
\begin{equation*}
2 S_K
\begin{pmatrix}
bc \\ ca \\ ab
\end{pmatrix}
\cdot
\overline{\Lambda (\mathcal{C}(\vv_{+-+}, \vv_{++-}))}
=
2 S_K(ca \beta^\circ + ab \gamma^\circ)
=
2 \, \overline{\mathcal{C}(\vv_{+-+}, \vv_{++-})}
\cdot
\overline{\Lambda (\mathcal{C}(\vv_{+-+}, \vv_{++-}))}.
\end{equation*}
Combining it with \eqref{eq:35}, we have
\begin{equation*}
\begin{aligned}
&\left(
\overline{\mathcal{C}(\vv_{+-+}, \vv_{++-})}
+
\overline{\mathcal{C}(\vv_{++-},\vv_{-++})}
+
\overline{\mathcal{C}(\vv_{-++}, \vv_{+-+})}
\right)\cdot
\overline{\Lambda (\mathcal{C}(\vv_{+-+}, \vv_{++-}))} \\
& =
2 \, \overline{\mathcal{C}(\vv_{+-+}, \vv_{++-})}
\cdot
\overline{\Lambda (\mathcal{C}(\vv_{+-+}, \vv_{++-}))}.
\end{aligned}
\end{equation*}
Similarly, we obtain
\begin{equation}
\label{eq:37}
\begin{aligned}
&\left(
\overline{\mathcal{C}(\vv_{+-+}, \vv_{++-})}
+
\overline{\mathcal{C}(\vv_{++-},\vv_{-++})}
+
\overline{\mathcal{C}(\vv_{-++}, \vv_{+-+})}
\right) \\
&\cdot
\left(
\overline{\Lambda (\mathcal{C}(\vv_{+-+}, \vv_{++-}))}
+
\overline{\Lambda (\mathcal{C}(\vv_{++-},\vv_{-++}))}
+
\overline{\Lambda (\mathcal{C}(\vv_{-++}, \vv_{+-+}))}
\right) \\
& =
2 \, \overline{\mathcal{C}(\vv_{+-+}, \vv_{++-})}
\cdot
\overline{\Lambda (\mathcal{C}(\vv_{+-+}, \vv_{++-}))}
+
2 \, \overline{\mathcal{C}(\vv_{++-}, \vv_{-++})}
\cdot
\overline{\Lambda (\mathcal{C}(\vv_{++-}, \vv_{-++}))} \\
&+
2 \, \overline{\mathcal{C}(\vv_{-++}, \vv_{+-+})}
\cdot
\overline{\Lambda (\mathcal{C}(\vv_{-++}, \vv_{+-+}))}.
\end{aligned}
\end{equation}
By a dilation of $K$, we may assume that $\vv_{---} \in \partial K$.
Then 
$\vv_{+-+}, \vv_{++-} \in \partial K$ from \eqref{eq:2}, 
and by Lemma \ref{lem:2}, it holds that
\begin{equation*}
2 \, \overline{\mathcal{C}(\vv_{+-+}, \vv_{++-})}
\cdot
\overline{\Lambda (\mathcal{C}(\vv_{+-+}, \vv_{++-}))}
\geq \frac{1}{2} (\vv_{+-+}-\vv_{++-})\cdot (\Lambda \vv_{+-+}- \Lambda \vv_{++-}).
\end{equation*}
Putting
\begin{equation*}
\begin{pmatrix}
a^\circ \\ b^\circ \\ c^\circ
\end{pmatrix}
:= -\Lambda \vv_{---},
\end{equation*}
we have 
\begin{equation*}
\vv_{---} \cdot \Lambda \vv_{---} = a a^\circ+b b^\circ+c c^\circ=1,
\end{equation*}
and 
\begin{equation*}
\begin{aligned}
\frac{1}{2} (\vv_{+-+}-\vv_{++-})\cdot (\Lambda \vv_{+-+}- \Lambda \vv_{++-})
&=
\begin{pmatrix}
0 \\ -b \\ c
\end{pmatrix}
\cdot
(\Lambda R_{13} \vv_{---} - \Lambda R_{12} \vv_{---}) \\
&=
\begin{pmatrix}
0 \\ -b \\ c
\end{pmatrix}
\cdot
\begin{pmatrix}
0 \\ -2 b^\circ \\ 2c^\circ
\end{pmatrix}
=
2 b b^\circ + 2 c c^\circ,
\end{aligned}
\end{equation*}
where the second equality holds due to Lemma \ref{lem:6}.
Hence, 
\begin{equation}
\label{eq:38}
2 \, \overline{\mathcal{C}(\vv_{+-+}, \vv_{++-})}
\cdot
\overline{\Lambda (\mathcal{C}(\vv_{+-+}, \vv_{++-}))}
\geq
2 b b^\circ + 2 c c^\circ.
\end{equation}
Similarly, we obtain
\begin{equation}
\label{eq:36}
\begin{aligned}
&2 \, \overline{\mathcal{C}(\vv_{++-}, \vv_{-++})}
\cdot
\overline{\Lambda (\mathcal{C}(\vv_{++-}, \vv_{-++}))}
\geq
2a a^\circ + 2 c c^\circ, \\
&2 \, \overline{\mathcal{C}(\vv_{-++}, \vv_{+-+})}
\cdot
\overline{\Lambda (\mathcal{C}(\vv_{-++}, \vv_{+-+}))} 
\geq
2 a a^\circ + 2 b b^\circ.
\end{aligned}
\end{equation}
It follows from \eqref{eq:39}, \eqref{eq:37}, \eqref{eq:38}, and \eqref{eq:36} that 
\begin{equation*}
\abs{K} \, \abs{K^\circ} \geq \frac{16}{9} 
\left(
4 a a^\circ
+
4 b b^\circ
+
4 c c^\circ
\right)
= \frac{64}{9},
\end{equation*}
which completes the proof.
\end{proof}

\subsection{Equipartition}
\label{sec:3.3}

Now we shall prove Proposition \ref{prop:1}.
In this subsection, $\R^3_{+++}$ denotes the first octant of $\R^3$ and $B \subset \R^3$ denotes the unit closed ball centered at the origin.

\begin{proof}[Proof of Proposition \ref{prop:1}]
For $\vp:=(a,b,c)^{\mathrm{T}} \in \R^3_{+++}$, we 
consider the continuous functions
\begin{equation*}
S_1(\vp):=
\frac{|o*\mathcal{C}(\vv_{+-+}, \vv_{++-})|}{\sqrt{c^2a^2+a^2b^2}}, \quad
S_2(\vp):=
\frac{|o*\mathcal{C}(\vv_{++-},\vv_{-++})|}{\sqrt{b^2c^2+a^2b^2}}, \quad
S_3(\vp):=
\frac{|o*\mathcal{C}(\vv_{-++}, \vv_{+-+})|}{\sqrt{b^2c^2+c^2a^2}}
\end{equation*}
and take $r>0$ such that $r B \subset K \subset r^{-1} B$.
Then, we see that
\begin{equation*}
|o*\mathcal{C}_{rB}(\vv_{+-+}, \vv_{++-})|
\leq
|o*\mathcal{C}_K(\vv_{+-+}, \vv_{++-})|
\leq
|o*\mathcal{C}_{r^{-1}B}(\vv_{+-+}, \vv_{++-})|.
\end{equation*}
Since $B$ is the unit ball, we have
\begin{equation*}
|o*\mathcal{C}_{B}(\vv_{+-+}, \vv_{++-})|
=
\frac{1}{2}\arccos\left(
\frac{\vv_{+-+} \cdot \vv_{++-}}{\norm{\vv_{+-+}} \, \norm{\vv_{++-}}}
\right)
=
\frac{1}{2}
\arccos \left(
\frac{a^2-b^2-c^2}{a^2+b^2+c^2}
\right).
\end{equation*}
Thus, putting 
\begin{equation*}
f(\vp)
:=\frac{1}{2\sqrt{c^2 a^2+a^2 b^2}}
\arccos \left(
\frac{a^2-b^2-c^2}{a^2+b^2+c^2}
\right),
\end{equation*}
we get 
\begin{equation*}
r^2 f(\vp)
\leq S_1(\vp)
\leq
r^{-2} f(\vp).
\end{equation*}
Similarly, by using the notation
\begin{equation*}
R:=
\begin{pmatrix}
0 & 0 & 1 \\
1 & 0 & 0 \\
0 & 1 & 0 \\
\end{pmatrix}, \quad
R \vp=
\begin{pmatrix}
c \\ a \\ b
\end{pmatrix}, \quad
R^2 \vp=
\begin{pmatrix}
b \\ c \\ a
\end{pmatrix},
\end{equation*}
we also obtain
\begin{equation*}
r^2 f(R^2\vp)
\leq S_2(\vp)
\leq
r^{-2} f(R^2\vp), \quad
r^2 f(R\vp)
\leq S_3(\vp)
\leq
r^{-2} f(R\vp).
\end{equation*}
To get the required condition \eqref{eq:1}, we introduce a continuous map on $\R^3_{+++}$ defined by
\begin{equation*}
F(\vp):=
S_1(\vp)
\begin{pmatrix}
1 \\ 0
\end{pmatrix}
+
S_2(\vp)
\begin{pmatrix}
-1/2 \\ \sqrt{3}/2
\end{pmatrix}
+
S_3(\vp)
\begin{pmatrix}
-1/2 \\ -\sqrt{3}/2
\end{pmatrix}.
\end{equation*}
If the map $F: \R^3_{+++} \to \R^2$ has a zero point $\vp_0$, then \eqref{eq:1} holds at $\vp_0$.
We choose sufficiently small $\epsilon>0$ such that
\begin{equation}
\label{eq:40}
\frac{r^2 \pi}{8\sqrt{2}}
- r^{-2} \max\{\pi \sqrt{\epsilon}, 2 \arccos(1-20\epsilon)\} >0, \quad
\epsilon < \frac{1}{4},
\end{equation}
then $0 < \sqrt{\epsilon} <1/2$. 
Note that this constant $\epsilon>0$ depends only on $r$.
We set
\begin{equation*}
\va:=
\begin{pmatrix}
1 \\ \epsilon \\ \epsilon
\end{pmatrix}, \quad
\vb:=
\begin{pmatrix}
\epsilon \\ 1 \\ \epsilon
\end{pmatrix}, \quad
\vc:=
\begin{pmatrix}
\epsilon \\ \epsilon \\ 1
\end{pmatrix}
\in \R^3_{+++},
\end{equation*}
then $R \va = \vb$, $R \vb = \vc$, and $R \vc= \va$.
Let us consider the triangle $T:=\conv\{\va, \vb, \vc\} \subset \R^3_{+++}$ and calculate the winding number of $F(\vp)$ on
$\partial T = [\va, \vb] \cup [\vb, \vc] \cup [\vc, \va]$, which is homeomorphic to $S^1$.
If the winding number is not zero, then we get a zero point in $T$.
Indeed, 
by \cite{OR}*{Proposition 4.4}, 
for the continuous map $F: T \rightarrow \R^2$, if $o \notin F(\partial T)$ and 
the winding number $W(F|\partial T,o)$ of $F(\partial T)$ around the origin $o \in \R^2$ is not $0$, then we have
$o \in F(T)$. Hence, $F$ has a zero point in the interior of $T$.

Now we are going to show that $o \notin F(\partial T)$ and $W(F|\partial T,o) \neq 0$.
We put 
\begin{equation*}
\va':= \frac{\vb + \vc}{2}, \quad
\vb':= \frac{\vc + \va}{2}, \quad
\vc':= \frac{\va + \vb}{2}.
\end{equation*}
These are the midpoints of the line segments $[\vb, \vc]$, $[\vc, \va]$, $[\va, \vb]$, respectively.
For $\vp=(a,b,c)^\mathrm{T}:=(1-t) \va + t \vb$ $(0 \leq t \leq 1/2)$ on the segment $[\va, \vc']$, we have
\begin{equation*}
\begin{aligned}
F(\vp) \cdot 
\begin{pmatrix}
-1 \\ 0
\end{pmatrix}
&=\frac{1}{2} \left(
S_2(\vp)
+
S_3(\vp)
\right)
-
S_1(\vp)
\geq
\frac{1}{2}S_3(\vp)
-
S_1(\vp)
\geq 
\frac{r^2}{2} f(R \vp)
-r^{-2} f(\vp) \\
&=
\frac{r^2}{2} f((1-t)\vb + t \vc)
-r^{-2} f((1-t)\va + t \vb).
\end{aligned}
\end{equation*}
If $\sqrt{\epsilon} \leq t \leq 1/2$, then
\begin{equation*}
\sqrt{c^2 a^2 + a^2 b^2} 
\geq a b
=(1-t+\epsilon t)((1-t)\epsilon + t)
 \geq (1-t)t \geq \frac{\sqrt{\epsilon}}{2}
\end{equation*}
holds and we see that
\begin{equation*}
f((1-t)\va + t \vb) = f(\vp) \leq \frac{\pi}{\sqrt{\epsilon}}
\quad \left(\sqrt{\epsilon} \leq t \leq \frac{1}{2}\right).
\end{equation*}
If $0 \leq t \leq \sqrt{\epsilon} \,(<1/2)$, then we have
\begin{equation*}
\sqrt{c^2 a^2 + a^2 b^2} 
\geq a b
=(1-t+\epsilon t)((1-t)\epsilon + t)
\geq (1-t)^2\epsilon
\geq \frac{\epsilon}{4}
\end{equation*}
and
\begin{equation*}
\begin{aligned}
&\frac{a^2-b^2-c^2}{a^2+b^2+c^2}
=1- \frac{2(b^2+c^2)}{a^2+b^2+c^2}
\geq
1- \frac{2(b^2+c^2)}{a^2}
=
1- \frac{2(((1-t)\epsilon + t)^2+\epsilon^2)}{(1-t+\epsilon t)^2} \\
& \geq
1- \frac{2((\epsilon + \sqrt{\epsilon})^2+\epsilon^2)}{(1-t)^2}
\geq 1- 8(2 \epsilon^2 + 2 \epsilon \sqrt{\epsilon} + \epsilon)
\geq 1-20\epsilon,
\end{aligned}
\end{equation*}
which yield that
\begin{equation*}
f((1-t)\va + t \vb) \leq \frac{2}{\epsilon} \arccos(1-20\epsilon)
\quad
(0 \leq t \leq \sqrt{\epsilon}).
\end{equation*}
Thus we obtain
\begin{equation}
\label{eq:5}
f((1-t)\va + t \vb) \leq 
\max\left\{
\frac{\pi}{\sqrt{\epsilon}},
\frac{2}{\epsilon} \arccos(1-20\epsilon)
\right\}
\quad
\left(0 \leq t \leq \frac{1}{2}\right).
\end{equation}
On the other hand, since
$0 \leq t \leq 1/2$ and $\epsilon^2 < 1/4 \leq (1-t)^2 \leq 1$, 
we have
\begin{equation*}
(1-t+\epsilon t)^2 + (\epsilon - t \epsilon + t)^2
\leq 2, \quad
\epsilon^2 - (1-t +\epsilon t)^2 -(\epsilon - t \epsilon + t)^2
\leq \epsilon ^2 - (1-t)^2 \leq 0.
\end{equation*}
Hence, it holds that
\begin{equation*}
\begin{aligned}
&f((1-t)\vb + t \vc) \\
&=
\frac{1}{2 \epsilon \sqrt{(1-t+\epsilon t)^2 + (\epsilon - t \epsilon + t)^2}} \arccos
\left(
\frac{\epsilon^2 - (1-t +\epsilon t)^2 -(\epsilon - t \epsilon + t)^2}{\epsilon^2 + (1-t +\epsilon t)^2 +(\epsilon - t \epsilon + t)^2}
\right) 
\geq 
\frac{\pi}{4\sqrt{2} \epsilon}.
\end{aligned}
\end{equation*}
Combining it with \eqref{eq:5} and \eqref{eq:40}, we obtain
\begin{equation}
\label{eq:6}
\begin{aligned}
&\frac{r^2}{2} f((1-t)\vb + t \vc)
-r^{-2} f((1-t)\va + t \vb) \\
&\geq
\frac{1}{\epsilon}
\left(
\frac{r^2 \pi}{8\sqrt{2}}
- r^{-2} \max\{\pi \sqrt{\epsilon}, 2 \arccos(1-20\epsilon)\}
\right) >0 \quad \left(0 \leq t \leq \frac{1}{2}\right).
\end{aligned}
\end{equation}
Since $f((a,b,c)^\mathrm{T})=f((a,c,b)^\mathrm{T})$ by definition, 
for $\lambda_i \geq 0$ $(i=1,2,3)$ such that $(\lambda_1, \lambda_2, \lambda_3)\not=(0,0,0)$, 
we have
\begin{equation}
\label{eq:14}
f(\lambda_1 \va + \lambda_2 \vb + \lambda_3 \vc)
=
f(\lambda_1 \va + \lambda_3 \vb + \lambda_2 \vc).
\end{equation}
For $\vp=(1-t)\va + t \vc$ $(0 \leq t \leq 1/2)$ on $[\va, \vb']$, 
by \eqref{eq:6} and \eqref{eq:14}, 
we have
\begin{equation*}
\begin{aligned}
F(\vp) \cdot 
\begin{pmatrix}
-1 \\ 0
\end{pmatrix}
&=\frac{1}{2} \left(
S_2(\vp)
+
S_3(\vp)
\right)
-
S_1(\vp)
\geq
\frac{1}{2}S_2(\vp)
-
S_1(\vp)
\geq 
\frac{r^2}{2} f(R^2 \vp)
-r^{-2} f(\vp) \\
&=
\frac{r^2}{2} f((1-t)\vc + t \vb)
-r^{-2} f((1-t)\va + t \vc) \\
&=
\frac{r^2}{2} f((1-t)\vb + t \vc)
-r^{-2} f((1-t)\va + t \vb)>0
\quad \left(0 \leq t \leq \frac{1}{2}\right).
\end{aligned}
\end{equation*}
Next, for $\vp=(1-t)\vb+t \vc$ $(0 \leq t \leq 1/2)$ on $[\vb, \va']$, we have
\begin{equation*}
\begin{aligned}
F(\vp) \cdot 
\begin{pmatrix}
1/2 \\ -\sqrt{3}/2
\end{pmatrix}
&=\frac{1}{2} \left(
S_3(\vp)
+
S_1(\vp)
\right)
-
S_2(\vp)
\geq
\frac{1}{2}S_1(\vp)
-
S_2(\vp)
\geq 
\frac{r^2}{2} f(\vp)
-r^{-2} f(R^2 \vp) \\
&=
\frac{r^2}{2} f((1-t)\vb + t \vc)
-r^{-2} f((1-t)\va + t \vb) >0
\quad \left(0 \leq t \leq \frac{1}{2}\right).
\end{aligned}
\end{equation*}
As for $\vp=(1-t)\vb+t \va$ $(0 \leq t \leq 1/2)$ on $[\vb, \vc']$, we have
\begin{equation*}
\begin{aligned}
F(\vp) \cdot 
\begin{pmatrix}
1/2 \\ -\sqrt{3}/2
\end{pmatrix}
&=\frac{1}{2} \left(
S_3(\vp)
+
S_1(\vp)
\right)
-
S_2(\vp)
\geq
\frac{1}{2}S_3(\vp)
-
S_2(\vp)
\geq 
\frac{r^2}{2} f(R\vp)
-r^{-2} f(R^2 \vp) \\
&=
\frac{r^2}{2} f((1-t)\vc + t \vb)
-r^{-2} f((1-t)\va + t \vc) \\
&=
\frac{r^2}{2} f((1-t)\vb + t \vc)
-r^{-2} f((1-t)\va + t \vb) 
>0
\quad \left(0 \leq t \leq \frac{1}{2}\right).
\end{aligned}
\end{equation*}
Similarly, for any point $\vp$ on $[\vc, \vb'] \cup [\vc, \va']$, we obtain
\begin{equation*}
F(\vp) \cdot 
\begin{pmatrix}
1/2 \\ \sqrt{3}/2
\end{pmatrix}
>0.
\end{equation*}
Now we divide the boundary $\partial T$ of the triangle $T$ into 
$[\va, \vb'] \cup [\va, \vc']$,
$[\vb, \vc'] \cup [\vb, \va']$, and
$[\vc, \va'] \cup [\vc, \vb']$. Then, from the above results, we obtain
\begin{equation*}
\begin{aligned}
&F(\vp) \cdot
\begin{pmatrix}
-1 \\ 0
\end{pmatrix}
>0 \text{ for } \vp \in [\va, \vb'] \cup [\va, \vc'], \quad
F(\vp) \cdot
\begin{pmatrix}
1/2 \\ -\sqrt{3}/2
\end{pmatrix}
>0 \text{ for } \vp \in [\vb, \vc'] \cup [\vb, \va'], \\
&F(\vp) \cdot
\begin{pmatrix}
1/2 \\ \sqrt{3}/2
\end{pmatrix}
>0 \text{ for } \vp \in [\vc, \va'] \cup [\vc, \vb'],
\end{aligned}
\end{equation*}
which mean that
$o \notin F(\partial T)$ and $W(F|\partial T,o)=\pm 1$.
\end{proof}

\subsection{Equality condition}
\label{sec:3.4}


\begin{proof}[Proof of Theorem \ref{thm:1} (equality condition)]
If $K \in \mathcal{K}(D_2)$ is a $3$-simplex, then $K$ satisfies the equality condition. 
Conversely,
asuume that $K \in \mathcal{K}(D_2)$ satisfies the equality condition, that is, $\abs{K} \abs{K^\circ} = 64/9$.
By Proposition \ref{prop:5}, there exists a sequence of convex bodies $\{K^{(n)}\}_{n \in \N} \subset \checkK(D_2)$ such that $\lim_{n \rightarrow \infty} \delta(K^{(n)}, K) = 0$.
By applying Proposition \ref{prop:1} to $K=K^{(n)}$, there exist $a_n>0$, $b_n>0$, and $c_n>0$ such that \eqref{eq:1} holds with $(a,b,c)=(a_n, b_n, c_n)$ and 
$\vv_{+-+}=\vv^{(n)}_{+-+}$, $\vv_{-++}=\vv^{(n)}_{-++}$, $\vv_{++-}=\vv^{(n)}_{++-}$, 
where we have put $\vv_{---}^{(n)} := (-a_n, -b_n, -c_n)^\mathrm{T}$ and the other as in \eqref{eq:2} for each $n \in \N$.

We note that there exists a constant $r>0$ which does not depend on $n$ such that 
$r B \subset K^{(n)} \subset r^{-1} B$.
Moreover, in the proof of Proposition \ref{prop:1}, the small positive constant $\epsilon$ depends only on $r$, and hence the triangle $T$ does not depend on $n$.
Hence, $\{(a_n, b_n, c_n)^\mathrm{T}\}_{n \in \N} \subset T$.
Since $T$ is compact, by passing to a subsequence if necessary (for simplicity, we use the same notation for the subsequences), there exists $(a, b, c)^\mathrm{T} \in T$ such that 
$(a_n, b_n, c_n)^\mathrm{T} \rightarrow (a,b,c)^\mathrm{T}$ as $n \rightarrow \infty$.
In particular, $a>0$, $b>0$, and  $c>0$.
We put $\vv_{---}:=(-a, -b, -c)^\mathrm{T}$.
In addition, normalizing $K^{(n)}$ by their dilations, we may assume that
$\vv_{---}^{(n)} = (-a_n, -b_n, -c_n)^\mathrm{T} \in \partial K^{(n)}$ for each $n$. Then, we have $\vv_{---} \in \partial K$.
Notice that, even though this normalization is done, the convergence $K^{(n)} \rightarrow K$ as $n \rightarrow \infty$ still holds.
Since
\begin{equation*}
\begin{pmatrix}
-a^\circ_n \\ -b^\circ_n \\ -c^\circ_n
\end{pmatrix}
:=
\Lambda_{K^{(n)}} \vv^{(n)}_{---}
=
\Lambda_{K^{(n)}}
\begin{pmatrix}
-a_n \\ -b_n \\ -c_n
\end{pmatrix}
\in \partial (K^{(n)})^\circ, \quad
a_n a_n^\circ + 
b_n b_n^\circ + 
c_n c_n^\circ =1,
\end{equation*}
by passing to a subsequence if necessary, there exists 
$\vv_{---}^\circ=(-a^\circ, -b^\circ, -c^\circ)^\mathrm{T} \in \partial K^\circ$ such that
\begin{equation}
\label{eq:7}
\lim_{n \rightarrow \infty} 
\begin{pmatrix}
-a^\circ_n \\ -b^\circ_n \\ -c^\circ_n
\end{pmatrix}
=
\begin{pmatrix}
-a^\circ \\ -b^\circ \\ -c^\circ
\end{pmatrix},
\quad
a a^\circ + b b^\circ + c c^\circ=1.
\end{equation}
We put 
$\vv_{+-+} := R_{13} \vv_{---}, \vv_{++-} := R_{12} \vv_{---}, \vv_{-++} := R_{23} \vv_{---} \in \partial K$ and 
$\vv_{+-+}^\circ := R_{13} \vv_{---}^\circ, \vv_{++-}^\circ := R_{12} \vv_{---}^\circ, \vv_{-++}^\circ := R_{23} \vv_{---}^\circ \in \partial K^\circ$.
Focusing on vectors $\vv_{+-+}$ and $\vv_{++-}$, we define
\begin{equation}
\label{eq:15}
\begin{aligned}
H_1&:= \Span\{\vv_{+-+}, \vv_{++-}\}, \\
L_1&:= K \cap \pos\{\vv_{+-+}, \vv_{++-}\} \subset H_1, \\
L_1^\circ&:= o*\mathcal{C}_{K^\circ}(H_1; \vv_{+-+}^\circ,\vv_{++-}^\circ) \subset H_1.
\end{aligned}
\end{equation}
Here, for the definition of $L_1^\circ$, recall the paragraph just before Remark \ref{rem:C}.
Similarly, we define $H_2$, $L_2$, $L_2^\circ$ by using 
$\vv_{++-}$ and $\vv_{-++}$, and define $H_3$, $L_3$, $L_3^\circ$ by using $\vv_{-++}$ and $\vv_{+-+}$.

Taking the estimates \eqref{eq:39}, \eqref{eq:37}, \eqref{eq:38}, and \eqref{eq:36}
into account, for $K^{(n)}$, we have
\begin{equation*}
\begin{aligned}
\frac{9}{16} |K^{(n)}| \, |(K^{(n)})^\circ|
&\geq
2 \, \overline{\mathcal{C}_{K^{(n)}}(\vv_{+-+}^{(n)}, \vv_{++-}^{(n)})}
\cdot
\overline{\Lambda^{(n)} (\mathcal{C}_{K^{(n)}}(\vv_{+-+}^{(n)}, \vv_{++-}^{(n)}))} \\
&+
2 \, \overline{\mathcal{C}_{K^{(n)}}(\vv_{++-}^{(n)}, \vv_{-++}^{(n)})}
\cdot
\overline{\Lambda^{(n)} (\mathcal{C}_{K^{(n)}}(\vv_{++-}^{(n)}, \vv_{-++}^{(n)}))} \\
&+
2 \, \overline{\mathcal{C}_{K^{(n)}}(\vv_{-++}^{(n)}, \vv_{+-+}^{(n)})}
\cdot
\overline{\Lambda^{(n)} (\mathcal{C}_{K^{(n)}}(\vv_{-++}^{(n)}, \vv_{+-+}^{(n)}))},
\end{aligned}
\end{equation*}
\begin{equation*}
\begin{aligned}
2 \, \overline{\mathcal{C}_{K^{(n)}}(\vv_{+-+}^{(n)}, \vv_{++-}^{(n)})}
\cdot
\overline{\Lambda^{(n)} (\mathcal{C}_{K^{(n)}}(\vv_{+-+}^{(n)}, \vv_{++-}^{(n)}))} 
& \geq 2 b_n b_n^\circ + 2 c_n c_n^\circ, \\
2 \, \overline{\mathcal{C}_{K^{(n)}}(\vv_{++-}^{(n)}, \vv_{-++}^{(n)})}
\cdot
\overline{\Lambda^{(n)} (\mathcal{C}_{K^{(n)}}(\vv_{++-}^{(n)}, \vv_{-++}^{(n)}))}
& \geq 2 a_n a_n^\circ + 2 c_n c_n^\circ, \\
2 \, \overline{\mathcal{C}_{K^{(n)}}(\vv_{-++}^{(n)}, \vv_{+-+}^{(n)})}
\cdot
\overline{\Lambda^{(n)} (\mathcal{C}_{K^{(n)}}(\vv_{-++}^{(n)}, \vv_{+-+}^{(n)}))} 
& \geq 2 a_n a_n^\circ + 2 b_n b_n^\circ,
\end{aligned}
\end{equation*}
and so
\begin{equation*}
\frac{9}{16} |K^{(n)}| \, |(K^{(n)})^\circ| \geq 4 (a_n a_n^\circ + b_n b_n^\circ + c_n c_n^\circ)=4.
\end{equation*}
Since $\lim_{n \rightarrow \infty} \delta(K^{(n)}, K) = 0$, it follows from the equality condition that 
\begin{equation}
\label{eq:8}
\begin{aligned}
\lim_{n \rightarrow \infty} \overline{\mathcal{C}_{K^{(n)}}(\vv_{+-+}^{(n)}, \vv_{++-}^{(n)})}
\cdot
\overline{\Lambda^{(n)} (\mathcal{C}_{K^{(n)}}(\vv_{+-+}^{(n)}, \vv_{++-}^{(n)}))} 
&= b b^\circ + c c^\circ, \\
\lim_{n \rightarrow \infty} \overline{\mathcal{C}_{K^{(n)}}(\vv_{++-}^{(n)}, \vv_{-++}^{(n)})}
\cdot
\overline{\Lambda^{(n)} (\mathcal{C}_{K^{(n)}}(\vv_{++-}^{(n)}, \vv_{-++}^{(n)}))}
& = a a^\circ + c c^\circ, \\
\lim_{n \rightarrow \infty} \overline{\mathcal{C}_{K^{(n)}}(\vv_{-++}^{(n)}, \vv_{+-+}^{(n)})}
\cdot
\overline{\Lambda^{(n)} (\mathcal{C}_{K^{(n)}}(\vv_{-++}^{(n)}, \vv_{+-+}^{(n)}))} 
& = a a^\circ + b b^\circ.
\end{aligned}
\end{equation}
Let $H_1^{(n)}$ denote the plane through the origin $o$ with the unit normal vector
\begin{equation*}
\vn^{(n)}:=\frac{\vv_{+-+}^{(n)} \times \vv_{++-}^{(n)}}{\norm{\vv_{+-+}^{(n)} \times \vv_{++-}^{(n)}}}.
\end{equation*}
Then, by the arguments in \cite{IS2}*{Section 3.3}, we have
\begin{equation*}
\begin{aligned}
\overline{\mathcal{C}_{K^{(n)}}(\vv_{+-+}^{(n)}, \vv_{++-}^{(n)})}
&=
\abs{K^{(n)} \cap \pos\{\vv_{+-+}^{(n)}, \vv_{++-}^{(n)}\}} \vn^{(n)}, \\
\overline{\Lambda^{(n)} (\mathcal{C}_{K^{(n)}}(\vv_{+-+}^{(n)}, \vv_{++-}^{(n)}))} \cdot \vn^{(n)}
&=
\abs{o*\mathcal{C}_{(K^{(n)})^\circ}(H_1^{(n)}; \Lambda^{(n)}\vv_{+-+}^{(n)}, \Lambda^{(n)} \vv_{++-}^{(n)})
}.
\end{aligned}
\end{equation*}
From \eqref{eq:7}, \eqref{eq:15}, and \eqref{eq:8},
we obtain 
\begin{equation}
\label{eq:10}
|L_1| \, |L_1^\circ| = b b^\circ + c c^\circ = \frac{1}{4} (\vv_{+-+} - \vv_{++-})\cdot(\vv^\circ_{+-+} - \vv^\circ_{++-}),
\end{equation}
since $K^{(n)} \rightarrow K$ as $n \rightarrow \infty$ in the Hausdorff distance.
By similar arguments about $L_2$ and $L_3$, we get
\begin{equation*}
|L_2| \, |L_2^\circ| = a a^\circ + c c^\circ, \quad |L_3| \, |L_3^\circ| = a a^\circ + b b^\circ.
\end{equation*}

Since 
$aa^\circ+bb^\circ+cc^\circ=1$,
$\vv_{+-+} \cdot \vv^\circ_{++-} \leq 1$, 
$\vv_{++-} \cdot \vv^\circ_{-++} \leq 1$, and
$\vv_{-++} \cdot \vv^\circ_{+-+} \leq 1$, we have
\begin{equation*}
aa^\circ \leq 1, \quad
bb^\circ \leq 1, \quad
cc^\circ \leq 1.
\end{equation*}
The case that $aa^\circ=bb^\circ=cc^\circ=1$ does not happen, since $aa^\circ+bb^\circ+cc^\circ=1$.
Moreover, without loss of generality we may assume that $aa^\circ \geq bb^\circ \geq cc^\circ$.
Thus, it suffices to consider the following three cases:
\begin{description}
\item[Case I] $aa^\circ<1$, $bb^\circ<1$, $cc^\circ<1$. 
\item[Case II] $aa^\circ=1$, $bb^\circ<1$, $cc^\circ<1$.  
\item[Case III] $aa^\circ=1$, $bb^\circ=1$, $cc^\circ<1$. 
\end{description}
We shall discuss each of these cases.

\paragraph{Case I: $aa^\circ<1$, $b b^\circ<1$, $c c^\circ<1$.}

\ Under the condition $aa^\circ+bb^\circ+cc^\circ=1$, $aa^\circ<1$ holds if and only if $\vv_{++-} \cdot \vv^\circ_{+-+}=\vv_{+-+} \cdot \vv^\circ_{++-}<1$, and so we can apply Lemma \ref{lem:4} to $L_1$ and $L_1^\circ$.
By the equality condition \eqref{eq:10}, we have
\begin{equation}
\label{eq:19}
L_1 = 
o*([\vv_{+-+}, \vv_1] \cup [\vv_1, \vv_{++-}]), \quad
L^\circ_1 = 
o*([\Pi_{H_1} \vv^\circ_{+-+}, \vv^\circ_1] \cup [\vv^\circ_1, \Pi_{H_1} \vv^\circ_{++-}]),
\end{equation}
where 
\begin{equation*}
\vv_1 = \delta_1 \vn_1 \times (\vv^\circ_{+-+}-\vv^\circ_{++-}), \quad
\vv^\circ_1 = \delta^\circ_1 \vn_1 \times (\vv_{+-+}-\vv_{++-}), \quad
\vn_1 :=
\frac{\vv_{+-+} \times \vv_{++-}}{\norm{\vv_{+-+} \times \vv_{++-}}}.
\end{equation*}
By a direct calculation, we have
\begin{equation*}
\vv_1=\frac{2\delta_1 (1-a a^\circ)}{\sqrt{b^2+c^2}}
\begin{pmatrix}
1 \\ 0 \\ 0
\end{pmatrix}, \quad
\vv^\circ_1=2\delta^\circ_1  \sqrt{b^2+c^2}
\begin{pmatrix}
1 \\ 0 \\ 0
\end{pmatrix}.
\end{equation*}
Moreover, the above constants $\delta_1$ and $\delta^\circ_1$ are expressed as follows:
\begin{equation*}
\begin{aligned}
\text{Case A\textsuperscript{\!1}: } 
\delta_1&=\frac{1}{\det(\vn_1, \vv^\circ_{+-+}, \vv^\circ_{++-})}
= \frac{\sqrt{b^2+c^2}}{2a^\circ(1-aa^\circ)}, \\
\delta^\circ_1&=\frac{\det(\vn_1, \vv^\circ_{+-+}, \vv^\circ_{++-})}{(\vv_{+-+}-\vv_{++-})\cdot (\vv^\circ_{+-+}-\vv^\circ_{++-})}
=\frac{a^\circ}{2\sqrt{b^2+c^2}}, \\
\text{Case B\textsuperscript{1}: } 
\delta_1&=\frac{\det(\vn_1,\vv_{+-+},\vv_{++-})}{(\vv_{+-+}-\vv_{++-})\cdot (\vv^\circ_{+-+}-\vv^\circ_{++-})}
= \frac{a\sqrt{b^2+c^2}}{2(1-aa^\circ)}, \\
\delta^\circ_1&=\frac{1}{\det(\vn_1,\vv_{+-+},\vv_{++-})}
=\frac{1}{2a\sqrt{b^2+c^2}}.
\end{aligned}
\end{equation*}
We note that $\delta_1 \delta^\circ_1=1/(4-4aa^\circ)$ holds in either case.
In addition, 
Case A\textsuperscript{\!1} occurs only when $\angle (\Pi_{H_1}\vv^\circ_{+-+}) o (\Pi_{H_1}\vv^\circ_{++-}) < \pi$ is satisfied, which 
is equivalent to 
$(\vn_1 \times \vv^\circ_{+-+})\cdot \vv^\circ_{++-} =2a^\circ(1-aa^\circ)/\sqrt{b^2+c^2}>0$, 
that is, $a^\circ>0$.

By the definition of $L^\circ_1$, there exists $\sigma_1 \in \R$ such that 
\begin{equation*}
\hat{\vv}^\circ_1 := \delta^\circ_1 \vn_1 \times (\vv_{+-+}-\vv_{++-}) + \sigma_1 \vn_1 \in K^\circ.
\end{equation*}
By the symmetry and convexity of $K^\circ$, we have
\begin{equation*}
R_{23} \hat{\vv}^\circ_1 = \delta^\circ_1 \vn_1 \times (\vv_{+-+}-\vv_{++-}) - \sigma_1 \vn_1 \in K^\circ, \quad 
\vv^\circ_1 =
\frac{\hat{\vv}^\circ_1 + R_{23} \hat{\vv}^\circ_1}{2} \in K^\circ.
\end{equation*}
Hence, \eqref{eq:19} asserts that 
\begin{equation}
\label{eq:18}
[\vv_{+-+}, \vv_1], \,
[\vv_1, \vv_{++-}] \subset \partial K, \quad
[\vv^\circ_{+-+}, \vv^\circ_1], \,
[\vv^\circ_1, \vv^\circ_{++-}] \subset \partial K^\circ.
\end{equation}
By similar arguments about $L_2$ and $L_3$, we can define 
$\vv_2, \vv_3 \in \partial K$ and $\vv^\circ_2, \vv^\circ_3 \in \partial K^\circ$ so as to satisfy
\begin{equation*}
\begin{aligned}
&[\vv_{++-}, \vv_2], \,
[\vv_2, \vv_{-++}] \subset \partial K, \quad
[\vv^\circ_{++-}, \vv^\circ_2], \,
[\vv^\circ_2, \vv^\circ_{-++}] \subset \partial K^\circ, \\
&[\vv_{-++}, \vv_3], \,
[\vv_3, \vv_{+-+}] \subset \partial K, \quad
[\vv^\circ_{-++}, \vv^\circ_3], \,
[\vv^\circ_3, \vv^\circ_{+-+}] \subset \partial K^\circ.
\end{aligned}
\end{equation*}
Next, by using these segments and the $D_2$-symmetry of $K$, we divide $K$ and $K^\circ$,
and estimate the volume product of $K$.
By \eqref{eq:18} and the above conditions about $\vv_2$ and $\vv_3$, we have
\begin{equation}
\label{eq:20}
\begin{aligned}
\mathcal{C}&:= 
\mathcal{C}_{K}(\vv_{+-+}, \vv_{1}, \vv_{++-}, \vv_{2}, \vv_{-++}, \vv_{3}, \vv_{+-+}) \\
&=
[\vv_{+-+}, \vv_{1}] \cup
[\vv_{1}, \vv_{++-}] \cup
[\vv_{++-}, \vv_{2}] \cup
[\vv_{2}, \vv_{-++}] \cup
[\vv_{-++}, \vv_{3}] \cup
[\vv_{3}, \vv_{+-+}], \\
\mathcal{C}^\circ&:= 
\mathcal{C}_{K^\circ}(\vv^\circ_{+-+}, \vv^\circ_{1}, \vv^\circ_{++-}, \vv^\circ_{2}, \vv^\circ_{-++}, \vv^\circ_{3}, \vv^\circ_{+-+}) \\
&=
[\vv^\circ_{+-+}, \vv^\circ_{1}] \cup
[\vv^\circ_{1}, \vv^\circ_{++-}] \cup
[\vv^\circ_{++-}, \vv^\circ_{2}] \cup
[\vv^\circ_{2}, \vv^\circ_{-++}] \cup
[\vv^\circ_{-++}, \vv^\circ_{3}] \cup
[\vv^\circ_{3}, \vv^\circ_{+-+}].
\end{aligned}
\end{equation}
Using them, we put
\begin{equation*}
K_0 := o*\mathcal{S}_K(\mathcal{C}), \quad
K^\circ_0 := o*\mathcal{S}_{K^\circ}(\mathcal{C}^\circ).
\end{equation*}
Clearly, $\mathcal{C}$ is a simple closed curve on $\partial K$.
Moreover, $\mathcal{C}^\circ$ is also a simple closed curve on $\partial K^\circ$.
Indeed, for $\vx^\circ:= (1/a,1/b,1/c)^\mathrm{T}$
we can easily check that
$(\vv^\circ_{+-+} \times \vv^\circ_1) \cdot \vx^{\circ}$, 
$(\vv^\circ_1 \times \vv^\circ_{++-}) \cdot \vx^{\circ}$, 
$(\vv^\circ_{++-} \times \vv^\circ_2) \cdot \vx^{\circ}$, 
$(\vv^\circ_2 \times \vv^\circ_{-++}) \cdot \vx^{\circ}$, 
$(\vv^\circ_{-++} \times \vv^\circ_3) \cdot \vx^{\circ}$, and
$(\vv^\circ_3 \times \vv^\circ_{+-+}) \cdot \vx^{\circ}$ are nonnegative.
Thus $\mathcal{C}^\circ$ is a simple closed curve on $\partial K^\circ$ surrounding the point $\vx^\circ/\mu_{K^\circ}(\vx^\circ) \in \partial K^\circ$.
By the symmetry, $\abs{K} \abs{K^\circ} = 16 \abs{K_0} \abs{K^\circ_0}$ holds.
Applying Lemma \ref{lem:7} to $\mathcal{C}$ and $\mathcal{C}^\circ$, and using the equality condition, we get 
\begin{equation*}
\frac{4}{9} = \abs{K_0} \abs{K^\circ_0}
\geq \frac{1}{9}\overline{\mathcal{C}} \cdot \overline{\mathcal{C}^\circ}.
\end{equation*}
Here, by \eqref{eq:20}, we have
\begin{equation*}
\begin{aligned}
2\overline{\mathcal{C}} &=
\vv_{+-+} \times \vv_{1} +
\vv_{1} \times \vv_{++-} +
\vv_{++-} \times \vv_{2} +
\vv_{2} \times \vv_{-++} +
\vv_{-++} \times \vv_{3} +
\vv_{3} \times \vv_{+-+} , \\
2\overline{\mathcal{C}^\circ} &=
\vv^\circ_{+-+} \times \vv^\circ_{1} +
\vv^\circ_{1} \times \vv^\circ_{++-} +
\vv^\circ_{++-} \times \vv^\circ_{2} +
\vv^\circ_{2} \times \vv^\circ_{-++} +
\vv^\circ_{-++} \times \vv^\circ_{3} +
\vv^\circ_{3} \times \vv^\circ_{+-+},
\end{aligned}
\end{equation*}
and
\begin{equation*}
\begin{aligned}
\vv_{+-+} \times \vv_{1} +
\vv_{1} \times \vv_{++-}
&=\frac{2 \delta_1 (1-a a^\circ)}{a \sqrt{b^2+c^2}}
\begin{pmatrix}
0 \\ 2ac \\ 2ab
\end{pmatrix}, \\
\vv^\circ_{+-+} \times \vv^\circ_{1} +
\vv^\circ_{1} \times \vv^\circ_{++-} &=
\frac{2 \delta^\circ_1 \sqrt{b^2+c^2}}{a^\circ}
\begin{pmatrix}
0 \\ 2a^\circ c^\circ \\ 2a^\circ b^\circ
\end{pmatrix}.
\end{aligned}
\end{equation*}
Here, we denote the coefficients above by $\epsilon_1$ and $\epsilon_1^\circ$ as follows:
\begin{equation*}
\epsilon_1 := 
\frac{2 \delta_1 (1-a a^\circ)}{a \sqrt{b^2+c^2}}, \quad 
\epsilon_1^\circ := 
\frac{2 \delta^\circ_1 \sqrt{b^2+c^2}}{a^\circ}.
\end{equation*}
In a similar way, we define constants $\epsilon_i$, $\epsilon^\circ_i$ $(i=2,3)$.
We note that $\epsilon_1 \epsilon^\circ_1=1/(aa^\circ)$, $\epsilon_2 \epsilon^\circ_2=1/(bb^\circ)$, and $\epsilon_3 \epsilon^\circ_3=1/(cc^\circ)$.
Recall that \eqref{eq:1} holds for $K^{(n)}$, so that  as $n \rightarrow \infty$,  \eqref{eq:1} also holds for $K$.
Since 
\begin{equation*}
\begin{aligned}
&\overline{\mathcal{C}_K(\vv_{+-+}, \vv_{++-})} = 
\frac{1}{2} 
\left(
\vv_{+-+} \times \vv_{1}
+
\vv_{1} \times \vv_{++-} 
\right)
=\epsilon_1
\begin{pmatrix}
0 \\ ac \\ ab
\end{pmatrix}, \\
&\overline{\mathcal{C}_K(\vv_{++-}, \vv_{-++})}
= 
\epsilon_2
\begin{pmatrix}
bc \\ 0 \\ ab
\end{pmatrix}, \quad
\overline{\mathcal{C}_K(\vv_{-++}, \vv_{+-+})}
= 
\epsilon_3
\begin{pmatrix}
bc \\ ac \\ 0
\end{pmatrix},
\end{aligned}
\end{equation*}
the condition \eqref{eq:1} for $K$ means that
\begin{equation}
\label{eq:28}
\epsilon:=
\epsilon_1 =
\epsilon_2 =
\epsilon_3.
\end{equation}
Consequently, we obtain
\begin{equation*}
\overline{\mathcal{C}} = 
\begin{pmatrix}
bc(\epsilon_2 + \epsilon_3) \\
ac(\epsilon_1 + \epsilon_3) \\
ab(\epsilon_1 + \epsilon_2)
\end{pmatrix}
=2 \epsilon
\begin{pmatrix}
bc \\
ac \\
ab
\end{pmatrix}, \quad
\overline{\mathcal{C}^\circ}
=
\epsilon^\circ_1
\begin{pmatrix}
0 \\ a^\circ c^\circ \\ a^\circ b^\circ
\end{pmatrix}
+
\epsilon^\circ_2
\begin{pmatrix}
b^\circ c^\circ \\ 0 \\ a^\circ b^\circ
\end{pmatrix}
+
\epsilon^\circ_3
\begin{pmatrix}
b^\circ c^\circ \\ a^\circ c^\circ \\ 0
\end{pmatrix}
=
\frac{1}{\epsilon}
\begin{pmatrix}
\displaystyle \frac{cc^\circ + bb^\circ}{bc}\\
\displaystyle \frac{cc^\circ + aa^\circ}{ac} \\
\displaystyle \frac{bb^\circ+aa^\circ}{ab}
\end{pmatrix},
\end{equation*}
which yield
\begin{equation}
\label{eq:9}
4 = 9 \abs{K_0} \abs{K^\circ_0} \geq 
\overline{\mathcal{C}} \cdot \overline{\mathcal{C}^\circ} =
4(a a^\circ + b b^\circ + c c^\circ) =4.
\end{equation}
Hence, the equality condition in Lemma \ref{lem:7} is satisfied, and there exist $\overline{\vv}, \overline{\vv}^\circ \in \R^3$ and $\tau, \tau^\circ>0$ such that
\begin{equation}
\label{eq:11}
K_0=o * (\overline{\vv}*\mathcal{C}), \quad
K^\circ_0=o * (\overline{\vv}^\circ *\mathcal{C}^\circ), \quad
\overline{\vv} = \tau \overline{\mathcal{C}^\circ}, \quad
\overline{\vv}^\circ = \tau^\circ \overline{\mathcal{C}}.
\end{equation}
Since $\mathcal{C}$ is a closed polygonal line on $\partial K$ and 
\begin{equation*}
\begin{aligned}
\partial K &= S_K(\mathcal{C})
\cup R_{23} S_K(\mathcal{C})
\cup R_{13} S_K(\mathcal{C})
\cup R_{12} S_K(\mathcal{C}) \\
&=
\left(\overline{\vv} * \mathcal{C}\right)
\cup \left(R_{23} \overline{\vv} * R_{23}\mathcal{C}\right)
\cup \left(R_{13} \overline{\vv} * R_{13}\mathcal{C}\right)
\cup \left(R_{12} \overline{\vv} * R_{12}\mathcal{C}\right)
\end{aligned}
\end{equation*}
by the $D_2$-symmetry, $K$ is a polyhedron.
In addition, each face of $K$ contains at least one of the four points
$\overline{\vv}$, 
$R_{23}\overline{\vv}$, 
$R_{13}\overline{\vv}$, and
$R_{12}\overline{\vv}$.
Similarly, $K^\circ$ is also a polyhedron.

Finally, we shall determine the shape of $K$ (or $K^\circ$).
The possibility of the constants $\epsilon_i$ $(i=1,2,3)$ summarizes as follows:
\begin{equation*}
\epsilon_1 =
\begin{cases}
1/a a^\circ & \text{ if Case A\textsuperscript{\!1}}, \\
1 & \text{ if Case B\textsuperscript{1}}, 
\end{cases} \quad
\epsilon_2 =
\begin{cases}
1/b b^\circ & \text{ if Case A\textsuperscript{\!2}}, \\
1 & \text{ if Case B\textsuperscript{2}}, 
\end{cases} \quad 
\epsilon_3 =
\begin{cases}
1/c c^\circ & \text{ if Case A\textsuperscript{\!3}}, \\
1 & \text{ if Case B\textsuperscript{3}}.
\end{cases}
\end{equation*}
However, it follows from \eqref{eq:28} that only two cases,
Case A\textsuperscript{\!1}--Case A\textsuperscript{\!2}--Case A\textsuperscript{\!3} or 
Case B\textsuperscript{1}--Case B\textsuperscript{2}--Case B\textsuperscript{3}, can occur.
Indeed, suppose that Case A\textsuperscript{\!1} and Case B\textsuperscript{2} hold, then
we have $\epsilon_1=1/(a a^\circ)$ and $\epsilon_2=1$, which contradict to \eqref{eq:28}
since $a a^\circ <1$.
Similarly, Case A\textsuperscript{\!$i$} and Case B\textsuperscript{$j$} cannot occur simultaneously. 

Now, we consider Case A\textsuperscript{\!1}--Case A\textsuperscript{\!2}--Case A\textsuperscript{\!3}.
Since $aa^\circ+bb^\circ+cc^\circ=1$, by \eqref{eq:28}, we have
\begin{equation*}
aa^\circ=bb^\circ=cc^\circ=\frac{1}{3}, \quad
\epsilon=3, \quad
a^\circ, b^\circ, c^\circ>0.
\end{equation*}
By \eqref{eq:11}, it holds that
\begin{equation}
\label{eq:3}
K_0=o*(\overline{\vv}*(
[\vv_{+-+}, \vv_{1}] \cup
[\vv_{1}, \vv_{++-}] \cup
[\vv_{++-}, \vv_{2}] \cup
[\vv_{2}, \vv_{-++}] \cup
[\vv_{-++}, \vv_{3}] \cup
[\vv_{3}, \vv_{+-+}])).
\end{equation}
By a direct calculation, we get
\begin{equation*}
\vv_1=
\begin{pmatrix}
1/a^\circ \\ 0 \\ 0
\end{pmatrix}, \quad
\vv_2=
\begin{pmatrix}
0 \\ 1/b^\circ \\ 0
\end{pmatrix}, \quad
\vv_3=
\begin{pmatrix}
0 \\ 0 \\ 1/c^\circ
\end{pmatrix}.
\end{equation*}
It follows from
\begin{equation*}
\vv_2 \cdot \vv_{-++} = \frac{b}{b^\circ}>0, \quad
\vv_3 \cdot \vv_{-++} = \frac{c}{c^\circ}>0, \quad
\det(\vv_2, \vv_{-++}, \vv_3) = \frac{a}{b^\circ c^\circ}>0
\end{equation*}
that $\conv\{\vv_2, \vv_{-++}, \vv_3\} \subset K_0$ and the three points $\vv_2$, $\vv_{-++}$, $\vv_3$ do not lie on a straight line.
Moreover, since $\vv_{2} \cdot \vv^\circ_{-++}= \vv_{-++}\cdot \vv^\circ_{-++}= \vv_3\cdot \vv^\circ_{-++}=1$,
$\vv^\circ_{-++}$ is a vertex of $K^\circ$ and its polar dual is a face of $K$.
We denote the face by $F$, then $\conv\{\vv_2, \vv_{-++}, \vv_3\}$ is a subset of $F$ and
is contained in $K_0 \cap \partial K$, and all faces of $K_0 \cap \partial K$ contain $\overline{\vv}$ from \eqref{eq:3}. Thus, $\overline{\vv} \in F$, and so $\overline{\vv} \cdot \vv^\circ_{-++}=1$.
By \eqref{eq:11}, we have
\begin{equation*}
1 = \overline{\vv} \cdot \vv^\circ_{-++}
=
\frac{2\tau}{9}
\begin{pmatrix}
1/bc \\
1/ac \\
1/ab
\end{pmatrix}
\cdot \vv^\circ_{-++}
=\frac{2\tau}{27abc}, \quad
\overline{\vv}
=
3
\begin{pmatrix}
a \\ b \\ c
\end{pmatrix}
=
\begin{pmatrix}
1/a^\circ \\ 1/b^\circ \\ 1/c^\circ
\end{pmatrix}.
\end{equation*}
Thus, we obtain 
\begin{equation}
\label{eq:42}
\begin{aligned}
\abs{K} &\geq \abs{\conv\{\overline{\vv}, R_{23}\overline{\vv}, R_{13}\overline{\vv}, R_{12}\overline{\vv}\}} = \frac{8}{3a^\circ b^\circ c^\circ}, \\
\abs{K^\circ} &\geq \abs{\conv\{\vv^\circ_{-++}, \vv^\circ_{+-+}, \vv^\circ_{++-}, \vv^\circ_{---}\}} = \frac{8a^\circ b^\circ c^\circ}{3}.
\end{aligned}
\end{equation}
By the equality condtion, \eqref{eq:42} implies that
\begin{equation*}
K=\conv\{\overline{\vv}, R_{23}\overline{\vv}, R_{13}\overline{\vv}, R_{12}\overline{\vv}\}, \quad
K^\circ=\conv\{\vv^\circ_{-++}, \vv^\circ_{+-+}, \vv^\circ_{++-}, \vv^\circ_{---}\},
\end{equation*}
hence, $K$ and $K^\circ$ are $3$-simplices.
In Case B\textsuperscript{1}--Case B\textsuperscript{2}--Case B\textsuperscript{3}, 
we exchange $K$ for $K^\circ$ and can discuss in the same way.
Then, we can conclude that $K$ and $K^\circ$ are $3$-simplices.

\paragraph{Case II: $aa^\circ=1$, $b b^\circ<1$, $cc^\circ<1$.}

\ In this case, we note that $bb^\circ+cc^\circ=0$ since $aa^\circ+bb^\circ+cc^\circ=1$, and
we have
\begin{equation*}
\vv_{+-+} \cdot \vv^\circ_{+-+}
=
\vv_{+-+} \cdot \vv^\circ_{++-}
=
\vv_{++-} \cdot \vv^\circ_{+-+}
=
\vv_{++-} \cdot \vv^\circ_{++-}
=1.
\end{equation*}
Hence, the inner product of every point on $[\vv_{+-+}, \vv_{++-}]$ and one on $[\vv^\circ_{+-+}, \vv^\circ_{++-}]$
is always equal to one, and so
$[\vv_{+-+}, \vv_{++-}] \subset \partial K$ and 
$[\vv^\circ_{+-+}, \vv^\circ_{++-}] \subset \partial K^\circ$ hold.

As we discussed in Case I, since $b b^\circ<1$ and $c c^\circ<1$, we can characterize 
$L_2$, $L^\circ_2$, $L_3$, $L^\circ_3$ by Lemma \ref{lem:4}.
From the equality conditions and the above facts, we obtain
\begin{equation*}
\begin{aligned}
\mathcal{C}&:= 
\mathcal{C}_{K}(\vv_{+-+}, \vv_{++-}, \vv_{2}, \vv_{-++}, \vv_{3}, \vv_{+-+}) \\
&=
[\vv_{+-+}, \vv_{++-}] \cup
[\vv_{++-}, \vv_{2}] \cup
[\vv_{2}, \vv_{-++}] \cup
[\vv_{-++}, \vv_{3}] \cup
[\vv_{3}, \vv_{+-+}], \\
\mathcal{C}^\circ&:= 
\mathcal{C}_{K^\circ}(\vv^\circ_{+-+}, \vv^\circ_{++-}, \vv^\circ_{2}, \vv^\circ_{-++}, \vv^\circ_{3}, \vv^\circ_{+-+}) \\
&=
[\vv^\circ_{+-+}, \vv^\circ_{++-}] \cup
[\vv^\circ_{++-}, \vv^\circ_{2}] \cup
[\vv^\circ_{2}, \vv^\circ_{-++}] \cup
[\vv^\circ_{-++}, \vv^\circ_{3}] \cup
[\vv^\circ_{3}, \vv^\circ_{+-+}]
\end{aligned}
\end{equation*}
and 
\begin{equation*}
\begin{aligned}
2\overline{\mathcal{C}} &=
\vv_{+-+} \times \vv_{++-} +
\vv_{++-} \times \vv_{2} +
\vv_{2} \times \vv_{-++} +
\vv_{-++} \times \vv_{3} +
\vv_{3} \times \vv_{+-+} , \\
2\overline{\mathcal{C}^\circ} &=
\vv^\circ_{+-+} \times \vv^\circ_{++-} +
\vv^\circ_{++-} \times \vv^\circ_{2} +
\vv^\circ_{2} \times \vv^\circ_{-++} +
\vv^\circ_{-++} \times \vv^\circ_{3} +
\vv^\circ_{3} \times \vv^\circ_{+-+}.
\end{aligned}
\end{equation*}
It follows from
\begin{equation*}
\begin{aligned}
&\overline{\mathcal{C}_K(\vv_{+-+}, \vv_{++-})} = 
\frac{1}{2} \vv_{+-+} \times \vv_{++-}
=
\begin{pmatrix}
0 \\ ac \\ ab
\end{pmatrix}, \\
&\overline{\mathcal{C}_K(\vv_{++-}, \vv_{-++})}
= 
\epsilon_2
\begin{pmatrix}
bc \\ 0 \\ ab
\end{pmatrix}, \quad
\overline{\mathcal{C}_K(\vv_{-++}, \vv_{+-+})}
= 
\epsilon_3
\begin{pmatrix}
bc \\ ac \\ 0
\end{pmatrix},
\end{aligned}
\end{equation*}
and the condition \eqref{eq:1} for $K$ that
\begin{equation}
\label{eq:29}
1=\epsilon_2 =\epsilon_3
\end{equation}
and 
\begin{equation*}
\overline{\mathcal{C}} = 
2 
\begin{pmatrix}
bc \\
ac \\
ab
\end{pmatrix}, \quad
\overline{\mathcal{C}^\circ} = 
\begin{pmatrix}
\displaystyle \frac{cc^\circ + bb^\circ}{bc}\\
\displaystyle \frac{cc^\circ + aa^\circ}{ac} \\
\displaystyle \frac{bb^\circ+aa^\circ}{ab}
\end{pmatrix}.
\end{equation*}
Then, we obtain
\begin{equation*}
4=9 \abs{K_0} \abs{K^\circ_0} \geq 
\overline{\mathcal{C}} \cdot \overline{\mathcal{C}^\circ} =
4(a a^\circ + b b^\circ + c c^\circ)
=4.
\end{equation*}
Moreover, by \eqref{eq:29}, only 
Case B\textsuperscript{2} and Case B\textsuperscript{3} can occur. Hence we have
\begin{equation*}
\begin{aligned}
&K^\circ_0=o*(\overline{\vv}^\circ*(
[\vv^\circ_{+-+}, \vv^\circ_{++-}] \cup
[\vv^\circ_{++-}, \vv^\circ_{2}] \cup
[\vv^\circ_{2}, \vv^\circ_{-++}] \cup
[\vv^\circ_{-++}, \vv^\circ_{3}] \cup
[\vv^\circ_{3}, \vv^\circ_{+-+}])), \\
&\vv^\circ_2=
\begin{pmatrix}
0 \\ 1/b \\ 0
\end{pmatrix}, \quad
\vv^\circ_3=
\begin{pmatrix}
0 \\ 0 \\ 1/c
\end{pmatrix}.
\end{aligned}
\end{equation*}
Similarly as in Case I, we get $\vv_{-++} \cdot \overline{\vv}^\circ=1$ and
\begin{equation*}
\overline{\vv}^\circ=
\begin{pmatrix}
1/a \\ 1/b \\ 1/c
\end{pmatrix}.
\end{equation*}
Therefore, combining
\begin{equation*}
\begin{aligned}
\abs{K} &\geq \abs{\conv\{\vv_{-++}, \vv_{+-+}, \vv_{++-}, \vv_{---}\}} = \frac{8a b c}{3}, \\
\abs{K^\circ} &\geq \abs{\conv\{\overline{\vv}^\circ, R_{23}\overline{\vv}^\circ, R_{13}\overline{\vv}^\circ, R_{12}\overline{\vv}^\circ\}} = \frac{8}{3abc}
\end{aligned}
\end{equation*}
with the equality condition, we conclude that $K$ and $K^\circ$ are $3$-simplices.

\paragraph{Case III: $aa^\circ=b b^\circ=1$, $c c^\circ=-1$.}

By a direct calculation, we have
\begin{equation*}
\begin{aligned}
\abs{K} &\geq \abs{\conv\{\vv_{-++}, \vv_{+-+}, \vv_{++-}, \vv_{---}\}} = \frac{8abc}{3}, \\
\abs{K^\circ} &\geq \abs{\conv\{\vv^\circ_{-++}, \vv^\circ_{+-+}, \vv^\circ_{++-}, \vv^\circ_{---}\}} = \frac{8\abs{a^\circ b^\circ c^\circ}}{3}.
\end{aligned}
\end{equation*}
Since $\abs{a a^\circ b b^\circ c c^\circ}=1$, the equality condition implies that
\begin{equation*}
K=\conv\{\vv_{-++}, \vv_{+-+}, \vv_{++-}, \vv_{---}\}, \quad
K^\circ=\conv\{\vv^\circ_{-++}, \vv^\circ_{+-+}, \vv^\circ_{++-}, \vv^\circ_{---}\},
\end{equation*}
hence $K$ and $K^\circ$ are $3$-simplices.
\end{proof}

\section{The case with $S_4$-symmetry}
\label{sec:4}

\subsection{Settings and preliminary calculations}
\label{sec:4.1}

We put
\begin{equation*}
g:= 
\begin{pmatrix}
0 & -1 & 0 \\
1 & 0 & 0 \\
0 & 0 & -1
\end{pmatrix}
\, (=R_4 H)
, \quad
\vv_0:=
\begin{pmatrix}
1 \\ 0 \\ u
\end{pmatrix}, \quad
\vv_k:= g^k \vv_0
=
\begin{pmatrix}
\cos (k\pi/2) \\
\sin (k\pi/2) \\
(-1)^k u
\end{pmatrix} \quad
(k \in \Z),
\end{equation*}
where $u$ is a positive parameter which will be determined later (see Proposition \ref{prop:4}).
Then, without loss of generality we may assume that the group $S_4$ is generated by $g$, and $g^4=E$ holds.
Let $K \in \checkK(S_4)$ and put
\begin{equation*}
\begin{aligned}
K_0 &:= o*\mathcal{S}_K(\vv_0, \vv_1, \vv_2)=K \cap \pos(\vv_0, \vv_1, \vv_2), \\
K^\circ_0 & :=o*\Lambda_K(\mathcal{S}_K(\vv_0, \vv_1, \vv_2)).
\end{aligned}
\end{equation*}
Then, we see that 
\begin{equation*}
\abs{K}=4  \abs{K_0}, \quad
\abs{K^\circ}=4  \abs{K_0^\circ}, \quad
\abs{K} \, \abs{K^\circ} = 16 \abs{K_0} \, \abs{K_0^\circ}.
\end{equation*}
By the volume estimate (Lemma \ref{lem:1}), we have
\begin{equation}
\label{eq:50}
\begin{aligned}
& 9 \abs{K_0} \, \abs{K_0^\circ} \\
& \geq 
\left(
\overline{\mathcal{C}(\vv_0, \vv_1)}
+
\overline{\mathcal{C}(\vv_1, \vv_2)}
+
\overline{\mathcal{C}(\vv_2, \vv_0)}
\right)
\cdot 
\left(
\overline{\Lambda (\mathcal{C}(\vv_0, \vv_1))}
+
\overline{\Lambda (\mathcal{C}(\vv_1, \vv_2))}
+
\overline{\Lambda (\mathcal{C}(\vv_2, \vv_0))}
\right).
\end{aligned}
\end{equation}
We write $(*)$ for the right-hand side of \eqref{eq:50} for simplicity, and put
\begin{equation*}
\begin{pmatrix}
a \\ b \\ c
\end{pmatrix}
:=
\overline{\mathcal{C}(\vv_0, \vv_1)}, \quad
\begin{pmatrix}
a^\circ \\ b^\circ \\ c^\circ
\end{pmatrix}
:=
\overline{\Lambda (\mathcal{C}(\vv_0, \vv_1))}.
\end{equation*}
Since
\begin{equation*}
\overline{\mathcal{C}(\vv_0, \vv_1)} 
=\norm{\overline{\mathcal{C}(\vv_0, \vv_1)}} \frac{\vv_0 \times \vv_1}{\norm{\vv_0 \times \vv_1}}
=\frac{\abs{o * \mathcal{C}(\vv_0, \vv_1)}}{\sqrt{1+2u^2}}
\begin{pmatrix}
-u \\ u \\ 1
\end{pmatrix},
\end{equation*}
we have
\begin{equation*}
a=-uc, \quad b=uc, \quad
c=\frac{|o*\mathcal{C}(\vv_0,\vv_1)|}{\sqrt{1+2 u^2}}>0.
\end{equation*}
Thus, we get
\begin{equation*}
\overline{\mathcal{C}(\vv_0, \vv_1)}
\cdot
\overline{\Lambda (\mathcal{C}(\vv_0, \vv_1))}
=c (-u a^\circ+u b^\circ+c^\circ).
\end{equation*}
In addition, since $\det g=-1$, we have
\begin{equation*}
\overline{\mathcal{C}(\vv_1, \vv_2)}
=
\overline{\mathcal{C}(g \vv_0, g \vv_1)}
=
-g
\overline{\mathcal{C}(\vv_0, \vv_1)}, \quad
\overline{\Lambda (\mathcal{C}(\vv_1, \vv_2))}
=
-g
\overline{\Lambda (\mathcal{C}(\vv_0, \vv_1))}.
\end{equation*}
Next, we put
\begin{equation*}
\begin{pmatrix}
\alpha \\ \beta \\ \gamma
\end{pmatrix}
:=
\overline{\mathcal{C}(\vv_2, \vv_0)}, \quad
\begin{pmatrix}
\alpha^\circ \\ \beta^\circ \\ \gamma^\circ
\end{pmatrix}
:=
\overline{\Lambda (\mathcal{C}(\vv_2, \vv_0))}.
\end{equation*}
Then, since
\begin{equation*}
\overline{\mathcal{C}(\vv_2, \vv_0)} 
=\norm{\overline{\mathcal{C}(\vv_2, \vv_0)}} \frac{\vv_2 \times \vv_0}{\norm{\vv_2 \times \vv_0}}
= \abs{o*\mathcal{C}(\vv_2, \vv_0)}
\begin{pmatrix}
0 \\ 1 \\ 0
\end{pmatrix},
\end{equation*}
we have
\begin{equation*}
\alpha=\gamma=0, \quad
\beta =\abs{o*\mathcal{C}(\vv_2, \vv_0)}>0.
\end{equation*}
Since
\begin{equation*}
g^2 
\overline{\Lambda (\mathcal{C}(\vv_2, \vv_0))} =
\overline{\Lambda (\mathcal{C}(g^2 \vv_2, g^2 \vv_0))} =
\overline{\Lambda (\mathcal{C}(\vv_0, \vv_2))} =
-
\overline{\Lambda (\mathcal{C}(\vv_2, \vv_0))},
\end{equation*}
we obtain
\begin{equation*}
g^2 
\begin{pmatrix}
\alpha^\circ \\ \beta^\circ \\ \gamma^\circ
\end{pmatrix}
=
\begin{pmatrix}
-\alpha^\circ \\ -\beta^\circ \\ \gamma^\circ
\end{pmatrix}
=
-
\begin{pmatrix}
\alpha^\circ \\ \beta^\circ \\ \gamma^\circ
\end{pmatrix},
\end{equation*}
which implies $\gamma^\circ=0$. Thus, we have
\begin{equation*}
\overline{\mathcal{C}(\vv_2, \vv_0)}
\cdot
\overline{\Lambda (\mathcal{C}(\vv_2, \vv_0))}
=\beta \beta^\circ.
\end{equation*}
Therefore, we calculate $(*)$ as follows:
\begin{equation}
\label{eq:51}
\begin{aligned}
(*) 
&=
\left(
\overline{\mathcal{C}(\vv_0, \vv_1)}
+
\overline{\mathcal{C}(\vv_1, \vv_2)}
+
\overline{\mathcal{C}(\vv_2, \vv_0)}
\right)
\cdot 
\left(
\overline{\Lambda (\mathcal{C}(\vv_0, \vv_1))}
+
\overline{\Lambda (\mathcal{C}(\vv_1, \vv_2)}
+
\overline{\Lambda (\mathcal{C}(\vv_2, \vv_0))}
\right) \\
&=
\left(
(I-g) \overline{\mathcal{C}(\vv_0, \vv_1)}
+ \overline{\mathcal{C}(\vv_2, \vv_0)}
\right)
\cdot 
\left(
(I-g) \overline{\Lambda (\mathcal{C}(\vv_0, \vv_1))}
+ \overline{\Lambda (\mathcal{C}(\vv_2, \vv_0))}
\right)
\\
&=
\left(
\begin{pmatrix}
1 & 1 & 0 \\
-1 & 1 & 0 \\
0 & 0 & 2
\end{pmatrix}
\begin{pmatrix}
-uc \\ uc \\ c
\end{pmatrix}
+
\begin{pmatrix}
0 \\ \beta \\ 0
\end{pmatrix}
\right)
\cdot
\left(
\begin{pmatrix}
1 & 1 & 0 \\
-1 & 1 & 0 \\
0 & 0 & 2
\end{pmatrix}
\begin{pmatrix}
a^\circ \\ b^\circ \\ c^\circ
\end{pmatrix}
+
\begin{pmatrix}
\alpha^\circ \\ \beta^\circ \\ 0
\end{pmatrix}
\right) \\
&=
\begin{pmatrix}
0 \\ 2uc + \beta \\ 2c
\end{pmatrix}
\cdot
\begin{pmatrix}
a^\circ+b^\circ + \alpha^\circ \\
-a^\circ+b^\circ + \beta^\circ \\
2 c^\circ 
\end{pmatrix}
=
(2uc+\beta)(-a^\circ+b^\circ+\beta^\circ)
+4c c^\circ.
\end{aligned}
\end{equation}
Here, by Proposition \ref{prop:4} below we can choose $u>0$ such that
$\beta = 2uc$, that is,
\begin{equation}
\label{eq:4}
\abs{o*\mathcal{C}_K(\vv_2,\vv_0)} = 
\frac{2 u \abs{o*\mathcal{C}_K(\vv_0,\vv_1)}}{\sqrt{1+2 u^2}}.
\end{equation}
\begin{proposition}
\label{prop:4}
For any $K \in \checkK(S_4)$, there exists a constant $u>0$ such that the condition \eqref{eq:4} holds.
\end{proposition}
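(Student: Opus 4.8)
We outline the intended proof. The plan is to realize \eqref{eq:4} as the vanishing of a single continuous function of the one real parameter $u$ and to locate a zero of it by the intermediate value theorem, using that the vertices $\vv_0=(1,0,u)^{\mathrm T}$, $\vv_1=(0,1,-u)^{\mathrm T}$, $\vv_2=(-1,0,u)^{\mathrm T}$ all vary smoothly with $u$. First I would fix $K\in\checkK(S_4)$ and $r>0$ with $rB\subseteq K\subseteq r^{-1}B$, where $B$ is the closed unit ball. Since $\cos\angle\vv_2 o\vv_0=(u^2-1)/(u^2+1)$ and $\cos\angle\vv_0 o\vv_1=-u^2/(1+u^2)$ lie strictly between $-1$ and $1$ for every $u>0$, the planar sector $o*\mathcal{C}_K(\va,\vb)$ coincides with the planar convex region $K\cap\pos(\va,\vb)$ for both relevant pairs of rays; and since $K\in\checkK$, so that $\rho_K$ is of class $C^\infty$ on $\R^3\setminus\{o\}$, the vectors $\overline{\mathcal{C}_K(\vv_i(u),\vv_j(u))}$, hence their lengths $\abs{o*\mathcal{C}_K(\vv_i(u),\vv_j(u))}$, depend continuously (in fact smoothly) on $u$. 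Consequently
\[
\Phi(u):=\abs{o*\mathcal{C}_K(\vv_2,\vv_0)}-\frac{2u}{\sqrt{1+2u^2}}\,\abs{o*\mathcal{C}_K(\vv_0,\vv_1)}\qquad(u>0)
\]
is a continuous function of $u$, and proving Proposition \ref{prop:4} amounts to showing that $\Phi$ has a zero in $(0,\infty)$.

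Next I would analyze the two ends of the parameter range. As $u\to0^{+}$ the factor $2u/\sqrt{1+2u^2}$ tends to $0$ while $\abs{o*\mathcal{C}_K(\vv_0,\vv_1)}\le\abs{K}$ stays bounded, so the subtracted term vanishes; meanwhile $\pos(\vv_2(u),\vv_0(u))=\{(x,0,z):z\ge u\abs{x}\}$ expands monotonically to the upper half of the plane $\{y=0\}$, so $\abs{o*\mathcal{C}_K(\vv_2,\vv_0)}=\abs{K\cap\pos(\vv_2(u),\vv_0(u))}$ increases to the area of the upper half of the planar body $K\cap\{y=0\}$, which is strictly positive. Hence $\Phi(u)>0$ for all sufficiently small $u>0$. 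As $u\to\infty$ we have $2u/\sqrt{1+2u^2}\to\sqrt2$, while $\angle\vv_2 o\vv_0=\arccos\frac{u^2-1}{u^2+1}\to0$ forces $\abs{o*\mathcal{C}_K(\vv_2,\vv_0)}\le\tfrac12 r^{-2}\angle\vv_2 o\vv_0\to0$, since that region is contained in a circular sector of radius $r^{-1}$ of aperture $\angle\vv_2 o\vv_0$; on the other hand, writing $\vv_m:=\tfrac12(\vv_0+\vv_1)=(\tfrac12,\tfrac12,0)^{\mathrm T}$, the triangle $\conv\{o,\rho_K(\vv_0)\vv_0,\rho_K(\vv_m)\vv_m\}$ is contained in $K\cap\pos(\vv_0,\vv_1)=o*\mathcal{C}_K(\vv_0,\vv_1)$ and has area $\tfrac12\,\rho_K(\vv_0)\norm{\vv_0}\,\rho_K(\vv_m)\norm{\vv_m}\,\sin\angle\vv_0 o\vv_m\ge\tfrac12 r^2\sin\angle\vv_0 o\vv_m$, which stays at least $\tfrac14 r^2$ for $u$ large because $\angle\vv_0 o\vv_m\to\pi/2$. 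Therefore $\Phi(u)<0$ for all sufficiently large $u$, and the intermediate value theorem produces $u^{*}\in(0,\infty)$ with $\Phi(u^{*})=0$, which is exactly \eqref{eq:4}.

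The parts that actually require care are the continuity statement and the two limiting estimates: in particular, the monotone-expansion argument that $\abs{K\cap\pos(\vv_2(u),\vv_0(u))}$ has a strictly positive limit as $u\to0^{+}$, and the combination of the elementary upper bound $\abs{K\cap\pos(\vv_2,\vv_0)}\le\tfrac12 r^{-2}\angle\vv_2 o\vv_0$ with the inscribed-triangle lower bound as $u\to\infty$. I expect this to be the main (and essentially only) obstacle, and a mild one: unlike Proposition \ref{prop:1}, where the three boundary sections of the fundamental domain must be equalized simultaneously and a winding-number argument on a triangle is needed, here the balancing condition \eqref{eq:4} is a single scalar equation in the single parameter $u$, so a one-dimensional intermediate value argument suffices and plays the role of the $D_2$-type equipartition.
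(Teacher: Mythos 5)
Your proposal is correct and follows essentially the same route as the paper: both reduce \eqref{eq:4} to a one-dimensional intermediate value theorem argument in $u$, using the sandwich $rB\subset K\subset r^{-1}B$ to force opposite signs (equivalently, the paper bounds the ratio $uc(u)/\beta(u)$ below $1/2$ for small $u$ and above $1/2$ for large $u$ via explicit $\arccos$ formulas for the ball). Your sector upper bound and inscribed-triangle lower bound are just a slightly different packaging of the same endpoint estimates, so there is no substantive gap.
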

\begin{proof}
We consider two functions of one variable $u$ defined by
\begin{equation*}
c(u):=\frac{\abs{o*\mathcal{C}_K(\vv_0, \vv_1)}}{\sqrt{1+2 u^2}}, \quad
\beta(u):=\abs{o*\mathcal{C}_{K}(\vv_2, \vv_0)}.
\end{equation*}
Then, we note that $c(u)$ and $\beta(u)$ depend on $u$ continuously, since $\vv_0$ depends on $u$ continuously.

Since $B \subset \R^3$ is the unit closed ball centered at $o$, as in Section \ref{sec:3.3}, we have
\begin{equation*}
\begin{aligned}
\abs{o*\mathcal{C}_{rB}(\vv_0,\vv_1)}
&=
\frac{r^2}{2}
\arccos \left(
\frac{\vv_0 \cdot \vv_1}{\norm{\vv_0} \, \norm{\vv_1}}
\right)
=
\frac{r^2}{2}
\arccos \left(
\frac{-u^2}{1+u^2}
\right), \\
\abs{o*\mathcal{C}_{rB}(\vv_2,\vv_0)}
&=
\frac{r^2}{2}
\arccos \left(
\frac{\vv_2 \cdot \vv_0}{\norm{\vv_2} \, \norm{\vv_0}}
\right)
=
\frac{r^2}{2}
\arccos \left(
\frac{-1 + u^2}{1+u^2}
\right)
\end{aligned}
\end{equation*}
for any  $r>0$.
Now we take the radius $r>0$ such that 
$rB \subset K \subset r^{-1}B$.
Then, we obtain
\begin{equation*}
\begin{aligned}
\frac{r^2}{2\sqrt{1+2u^2}}
\arccos \left(
\frac{ - u^2}{1+u^2}
\right)
&\leq c(u) \leq 
\frac{1}{2 r^2\sqrt{1+2u^2}}
\arccos \left(
\frac{ - u^2}{1+u^2}
\right), \\
\frac{r^2}{2}
\arccos \left(
\frac{-1 + u^2}{1+u^2}
\right)
& \leq \beta(u)
\leq
\frac{1}{2r^2}
\arccos \left(
\frac{-1 + u^2}{1+u^2}
\right).
\end{aligned}
\end{equation*}
Hence
\begin{equation}
\label{eq:30}
f_1(u)
\leq
\frac{u c(u)}{\beta(u)}
\leq
f_2(u),
\end{equation}
where 
\begin{equation*}
f_1(u):=
\frac{
u r^4
\arccos \left(
\frac{ - u^2}{1+u^2}
\right)
}{
\sqrt{1+2u^2}
\arccos \left(
\frac{-1 + u^2}{1+u^2}
\right)
}, 
\quad
f_2(u):=
\frac{
u \arccos \left(
\frac{ - u^2}{1+u^2}
\right)
}{
r^4
\sqrt{1+2u^2}
\arccos \left(
\frac{-1 + u^2}{1+u^2}
\right)
}.
\end{equation*}
Since $\lim_{u \rightarrow \infty} f_1(u)= \infty$ and 
$\lim_{u \rightarrow 0+} f_2(u)= 0$, 
there exist positive constants $m$ and $M$ depend only on $r$ such that
$0<m <M$, $f_1(M) \geq 1$, and $f_2(m) \leq 1/4$.
From \eqref{eq:30}, we have
\begin{equation*}
\frac{m c(m)}{\beta(m)} \leq f_2(m) \leq \frac{1}{4}, \quad
\frac{M c(M)}{\beta(M)} \geq f_1(M) \geq 1.
\end{equation*}
By the intermediate value theorem, there exists $u \in (m, M)$ such that
$u c(u)/\beta(u)=1/2$,
which completes the proof.
\end{proof}

\subsection{Proof of the inequality}
\label{sec:4.2}

\begin{proposition}
\label{prop:3}
For any $K \in \checkK(S_4)$, the inequality in Theorem \ref{thm:2} holds.
\end{proposition}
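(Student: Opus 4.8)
The plan is to run the argument of Proposition~\ref{prop:2} in the present setting: use the equipartition provided by Proposition~\ref{prop:4} to remove the cross terms in $(*)$, and then apply the signed volume estimate of Lemma~\ref{lem:2}. Let $K\in\checkK(S_4)$. By Proposition~\ref{prop:4} fix $u>0$ so that \eqref{eq:4} holds; this pins down $\vv_0=(1,0,u)^{\mathrm T}$ and hence $\vv_1=g\vv_0$, $\vv_2=g^2\vv_0$. After a dilation of $K$ (which changes neither the $S_4$-invariance of $K$ nor the validity of \eqref{eq:4}, since both sides of \eqref{eq:4} scale in the same way) we may assume $\vv_0\in\partial K$, and then $\vv_1,\vv_2\in\partial K$ as well. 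With this normalization the factor $2uc+\beta$ in the last line of \eqref{eq:51} equals $4uc$.

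The key step is to rewrite $(*)$, under \eqref{eq:4}, as a combination of two genuine boundary pairings. Using $a=-uc$, $b=uc$ and $\alpha=\gamma=\gamma^\circ=0$ from Section~\ref{sec:4.1}, one checks that
\begin{equation*}
(*)=4\,\overline{\mathcal{C}(\vv_0,\vv_1)}\cdot\overline{\Lambda(\mathcal{C}(\vv_0,\vv_1))}
+2\,\overline{\mathcal{C}(\vv_2,\vv_0)}\cdot\overline{\Lambda(\mathcal{C}(\vv_2,\vv_0))}.
\end{equation*}
Since $\vv_0\nparallel\vv_1$, $\vv_2\nparallel\vv_0$, and the four vectors lie on $\partial K$, Lemma~\ref{lem:2} applies to each of the two pairings and gives
\begin{equation*}
(*)\ \geq\ (\vv_0-\vv_1)\cdot(\Lambda\vv_0-\Lambda\vv_1)
+\tfrac{1}{2}\,(\vv_2-\vv_0)\cdot(\Lambda\vv_2-\Lambda\vv_0).
\end{equation*}

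It then remains to evaluate the right-hand side. By Lemma~\ref{lem:6}, $\Lambda\vv_1=g\,\Lambda\vv_0$ and $\Lambda\vv_2=g^2\,\Lambda\vv_0$; substituting these and using the explicit forms of $g$ and $g^2=\mathrm{diag}(-1,-1,1)$, a direct computation collapses the right-hand side to $4\,\vv_0\cdot\Lambda\vv_0$. Since $\vv_0\in\partial K$ forces $\vv_0\cdot\Lambda\vv_0=1$, we obtain $(*)\geq4$. Feeding this into \eqref{eq:50} yields $9\,\abs{K_0}\,\abs{K_0^\circ}\geq4$, and since $\abs{K}\,\abs{K^\circ}=16\,\abs{K_0}\,\abs{K_0^\circ}$ we conclude $\abs{K}\,\abs{K^\circ}\geq64/9$, which is the inequality of Theorem~\ref{thm:2}.

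The one substantive ingredient here is Proposition~\ref{prop:4}, which has already been established; the rest is algebra controlled by the $S_4$-equivariance of $\Lambda_K$. The point requiring care is the reduction of $(*)$ to the two-term form displayed above: it is precisely the choice $\beta=2uc$ together with the vanishing relations $\alpha=\gamma=\gamma^\circ=0$ (forced by the $g^2$-symmetry, as recorded in Section~\ref{sec:4.1}) that makes the off-diagonal contributions disappear, playing here the role that the equipartition \eqref{eq:1} played in the $D_2$ case. One must also keep the unequal weights $4$ and $2$ in front of the two pairings straight, since this is what produces the $\tfrac{1}{2}$ when Lemma~\ref{lem:2} is applied to the second pairing, and it is exactly balanced by the identity $(\vv_0-\vv_1)\cdot(\Lambda\vv_0-\Lambda\vv_1)+\tfrac{1}{2}(\vv_2-\vv_0)\cdot(\Lambda\vv_2-\Lambda\vv_0)=4\,\vv_0\cdot\Lambda\vv_0$.
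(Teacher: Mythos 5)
Your proposal is correct and follows essentially the same route as the paper's own proof: fix $u$ by Proposition \ref{prop:4}, dilate so that $\vv_0\in\partial K$, use $\beta=2uc$ together with the vanishing of $\alpha,\gamma,\gamma^\circ$ to collapse $(*)$ to the weighted sum $4\,\overline{\mathcal{C}(\vv_0,\vv_1)}\cdot\overline{\Lambda(\mathcal{C}(\vv_0,\vv_1))}+2\,\overline{\mathcal{C}(\vv_2,\vv_0)}\cdot\overline{\Lambda(\mathcal{C}(\vv_2,\vv_0))}$, apply Lemma \ref{lem:2} to each term, and evaluate the resulting expression via the $S_4$-equivariance of $\Lambda_K$ to get $4$. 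The concluding identity you record is exactly the computation the paper performs, so no gap remains.
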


\begin{proof}
Let $K \in \checkK(S_4)$.
By Proposition \ref{prop:4}, there exists $u>0$ such that the condition \eqref{eq:4} holds.
Since \eqref{eq:4} is invariant under dilations of $K$,
we may assume that $\vv_0 \in \partial K$.
Then, by the $S_4$-symmetry of $K$, $\vv_k \in  \partial K$ holds for $k \in \Z$.
We put 
\begin{equation*}
\begin{pmatrix}
s^\circ \\ t^\circ \\ u^\circ
\end{pmatrix}
:=
\Lambda \vv_0 \ \in \partial K^\circ.
\end{equation*}
Since $\vv_0 \cdot \Lambda \vv_0=1$, we have $s^\circ=1-uu^\circ$.
It follows from $\Lambda(\vv_k)=\Lambda(g^k \vv_0)=g^k \Lambda(\vv_0)$ $(k \in \Z)$ that
\begin{equation*}
\Lambda \vv_0
=
\begin{pmatrix}
1-uu^\circ \\ t^\circ \\ u^\circ
\end{pmatrix}, \quad
\Lambda \vv_1
=
\begin{pmatrix}
-t^\circ \\  1-uu^\circ \\ -u^\circ
\end{pmatrix}, \quad
\Lambda \vv_2
=
\begin{pmatrix}
-(1-uu^\circ) \\ -t^\circ \\ u^\circ
\end{pmatrix}.
\end{equation*}
On the other hand, 
by \eqref{eq:50}, \eqref{eq:51}, and \eqref{eq:4}, we have
\begin{equation*}
\begin{aligned}
\frac{9}{16} \abs{K} \abs{K^\circ} \geq 
(*) 
&=
(2uc+\beta)(-a^\circ+b^\circ)+(2uc+\beta)\beta^\circ
+4c c^\circ \\
&=
4uc(-a^\circ+b^\circ)+2\beta \beta^\circ
+4c c^\circ \\
&=
4c(-u a^\circ+u b^\circ + c^\circ) + 2 \beta \beta^\circ
\\ 
&=
4
\overline{\mathcal{C}(\vv_0, \vv_1)}
\cdot
\overline{\Lambda (\mathcal{C}(\vv_0, \vv_1))}
+
2
\overline{\mathcal{C}(\vv_2, \vv_0)}
\cdot
\overline{\Lambda (\mathcal{C}(\vv_2, \vv_0))}.
\end{aligned}
\end{equation*}
Applying Lemma \ref{lem:2} to the last line, we get
\begin{equation*}
\begin{aligned}
\frac{9}{16} \abs{K} \abs{K^\circ} \geq 
(*) & \geq
(\vv_0-\vv_1)\cdot(\Lambda \vv_0 - \Lambda \vv_1) 
+ \frac{1}{2}
(\vv_2-\vv_0)\cdot(\Lambda \vv_2 - \Lambda \vv_0)  \\
&=
\begin{pmatrix}
1 \\ -1 \\ 2u
\end{pmatrix}
\cdot
\begin{pmatrix}
1-uu^\circ+t^\circ \\ t^\circ-(1-uu^\circ) \\ 2u^\circ
\end{pmatrix}
+\frac{1}{2}
\begin{pmatrix}
-2 \\ 0 \\ 0
\end{pmatrix}
\cdot
\begin{pmatrix}
-2 (1-uu^\circ) \\ -2t^\circ \\ 0
\end{pmatrix}
=4.
\end{aligned}
\end{equation*}
Consequently, we obtain
\begin{equation*}
\abs{K} \, \abs{K^\circ} \geq \frac{16}{9} (*) \geq \frac{64}{9}.
\end{equation*}
\end{proof}

\subsection{Equality condition}
\label{sec:4.3}

\begin{proof}[Proof of Theorem \ref{thm:2} (equality condition)]
Let $K \in \mathcal{K}(S_4)$ and asuume that the equality condition $\abs{K} \abs{K^\circ}=64/9$ holds.
By Proposition \ref{prop:5}, 
there exists a sequence of convex bodies $\{K^{(n)}\}_{n \in \N} \subset \checkK(S_4)$ such that $\lim_{n \rightarrow \infty} \delta(K^{(n)}, K) = 0$.
As in the proof of the inequality, for each $K^{(n)}$, we put
\begin{equation*}
\vv^{(n)}_0:=
\begin{pmatrix}
1 \\ 0 \\ u_n
\end{pmatrix}, \quad
\vv^{(n)}_k:= g^k \vv^{(n)}_0
\quad
(k \in \Z),
\end{equation*}
where each constant $u_n>0$ satisfies the condition
\begin{equation}
\label{eq:22}
|o*\mathcal{C}_{K^{(n)}}(\vv^{(n)}_2,\vv^{(n)}_0)|
=
\frac{2 u_n |o*\mathcal{C}_{K^{(n)}}(\vv^{(n)}_0,\vv^{(n)}_1)|}{\sqrt{1+2 u_n^2}},
\end{equation}
which is guaranteed by Proposition \ref{prop:4}.
Since $K^{(n)} \rightarrow K$ as $n \rightarrow \infty$ in the Hausdorff distance, 
there exists $r>0$ independent of $n$ such that $rB \subset K^{(n)} \subset r^{-1}B$ $(n \in \N)$. As we have seen in the proof of Proposition \ref{prop:4}, there exist positive numbers $m$ and $M$ depend only on $r$, and $u_n \in (m,M)$ holds for any $n \in \N$. Thus, by passing to a subsequence if necessary, there exists $u \in [m,M]$ such that $u_n \rightarrow u$ $(n \rightarrow \infty)$.
By using this $u>0$, we put
\begin{equation*}
\vv_0:=
\begin{pmatrix}
1 \\ 0 \\ u
\end{pmatrix}, \quad
\vv_k:= g^k \vv_0
\quad
(k \in \Z).
\end{equation*}
By dilations of $K$ and $K^{(n)}$, we may assume that $\vv_0 \in \partial K$ and 
$\vv^{(n)}_0 \in \partial K^{(n)}$ $(n \in \N)$.
Moreover, by passing to a subsequence if necessary, 
there exists $\vv^\circ_0 \in \partial K^\circ$ such that
\begin{equation*}
\Lambda_{K^{(n)}}\vv^{(n)}_{0} \rightarrow \vv^\circ_0 \quad (n \rightarrow \infty).
\end{equation*}
We put $\vv^\circ_k := g^k \vv^\circ_0$ $(k \in \Z)$, then 
\begin{equation*}
\vv_k \cdot \vv^\circ_k=1 \quad (k \in \Z)
\end{equation*}
hold. Since $\vv_0 \cdot \vv^\circ_0=1$, 
there exist $t^\circ, u^\circ \in \R$ such that
\begin{equation*}
\vv^\circ_0=
\begin{pmatrix}
1- u u^\circ \\ t^\circ \\ u^\circ
\end{pmatrix}.
\end{equation*}
By definition, we have $\vv_k \in K$ and $\vv^\circ_k \in K^\circ$, which imply
\begin{equation*}
\vv_1 \cdot \vv^\circ_0 = t^\circ-uu^\circ \leq 1, \quad
\vv_2 \cdot \vv^\circ_0 = 2uu^\circ-1 \leq 1, \quad
\vv_3 \cdot \vv^\circ_0 = -t^\circ-uu^\circ \leq 1,
\end{equation*}
hence
\begin{equation}
\label{eq:12}
\abs{t^\circ} \leq 1+uu^\circ, \quad \abs{uu^\circ}\leq 1.
\end{equation}

Next, we put
\begin{equation*}
\begin{aligned}
H^{(n)}_k &:= \Span \{\vv^{(n)}_{k}, \vv^{(n)}_{k+1}\}, \qquad
L^{(n)}_k := K^{(n)} \cap \pos \{\vv^{(n)}_{k}, \vv^{(n)}_{k+1}\} \subset H^{(n)}_k, \\
(L^{(n)}_k)^\circ &:= 
o*\mathcal{C}_{(K^{(n)})^\circ}(H^{(n)}_k; \Lambda_{K^{(n)}} \vv^{(n)}_{k},  \Lambda_{K^{(n)}} \vv^{(n)}_{k+1}), \\
H^{(n)} &:= \Span \{\vv^{(n)}_2, \vv^{(n)}_0\}, \qquad
L^{(n)} := K^{(n)} \cap \pos \{\vv^{(n)}_2, \vv^{(n)}_0\} \subset H^{(n)}, \\
(L^{(n)})^\circ &:= 
o*\mathcal{C}_{(K^{(n)})^\circ}(H^{(n)}; \Lambda_{K^{(n)}} \vv^{(n)}_2,  \Lambda_{K^{(n)}} \vv^{(n)}_0) \\
\end{aligned}
\end{equation*}
for each $n \in \N$.
Similarly, we put
\begin{equation*}
\begin{aligned}
H_k &:= \Span \{\vv_{k}, \vv_{k+1}\}, &
L_k &:= K \cap \pos \{\vv_{k}, \vv_{k+1}\}, &
L^\circ_k &:= 
o* \mathcal{C}_{K^\circ}(H_k; \vv^\circ_{k},  \vv^\circ_{k+1}), \\
H &:= \Span \{\vv_2, \vv_0\}, &
L &:= K \cap \pos \{\vv_2, \vv_0\}, &
L^\circ &:= 
o* \mathcal{C}_{K^\circ}(H; \vv^\circ_2,  \vv^\circ_0).
\end{aligned}
\end{equation*}
Owing to \eqref{eq:22},
by the same way as in the proof of the inequality, we obtain
\begin{equation*}
\begin{aligned}
&\abs{K^{(n)}} \abs{(K^{(n)})^\circ} \\
&\geq 
\frac{16}{9}
\left(
4
\overline{\mathcal{C}_{K^{(n)}}(\vv^{(n)}_0, \vv^{(n)}_1)}
\cdot
\overline{\Lambda_{K^{(n)}} (\mathcal{C}_{K^{(n)}}(\vv^{(n)}_0, \vv^{(n)}_1))}
+
2
\overline{\mathcal{C}_{K^{(n)}}(\vv^{(n)}_2, \vv^{(n)}_0)}
\cdot
\overline{\Lambda_{K^{(n)}} (\mathcal{C}_{K^{(n)}}(\vv^{(n)}_2, \vv^{(n)}_0))}
\right) \\
&=
\frac{16}{9}
\left(
4
\abs{L^{(n)}_0}
\abs{(L^{(n)}_0)^\circ}
+
2
\abs{L^{(n)}}
\abs{(L^{(n)})^\circ}
\right) \\
&\geq
\frac{16}{9}
\left(
(\vv^{(n)}_0-\vv^{(n)}_1)\cdot
(\Lambda_{K^{(n)}}\vv^{(n)}_0-\Lambda_{K^{(n)}}\vv^{(n)}_1)
+
\frac{1}{2}
(\vv^{(n)}_2-\vv^{(n)}_0)\cdot
(\Lambda_{K^{(n)}}\vv^{(n)}_2-\Lambda_{K^{(n)}}\vv^{(n)}_0)
\right)
=\frac{64}{9}.
\end{aligned}
\end{equation*}
As $n\rightarrow\infty$, the equality condition for $K$ asserts that
\begin{equation}
\label{eq:23}
\abs{L_{0}}
\abs{L^\circ_{0}}
=\frac{1}{4}
(\vv_0-\vv_1)\cdot
(\vv^\circ_0-\vv^\circ_1), \quad
\abs{L}
\abs{L^\circ}
=\frac{1}{4}
(\vv_2-\vv_0)\cdot
(\vv^\circ_2-\vv^\circ_0).
\end{equation}

Now, we check the assumptions of Lemma \ref{lem:4} for $L$ and $L^\circ$.
Since
\begin{equation*}
\vv_0 \cdot \vv^\circ_0 =
\vv_2 \cdot \vv^\circ_2 =1, \quad
\vv_2 \cdot \vv^\circ_0 =
\vv_0 \cdot \vv^\circ_2 =-1+2u u^\circ,
\end{equation*}
we can apply Lemma \ref{lem:4} to the pair $L$ and $L^\circ$ if $uu^\circ<1$.
In addition, the equality condition holds from \eqref{eq:23}.
Thus, we can characterize $L$ and $L^\circ$ as follows:
\begin{equation}
\label{eq:17}
\begin{aligned}
&L=o*([\vv_2,\vw] \cup [\vw,\vv_0]), \quad
L^\circ= 
o*([\Pi_H \vv^\circ_2, \vw^\circ] \cup [\vw^\circ, \Pi_H \vv^\circ_0]), \\
&\vw:=\delta \vn \times (\vv^\circ_2-\vv^\circ_0), \quad
\vw^\circ:=\delta^\circ \vn \times (\vv_2-\vv_0), \quad
\vn:=\frac{\vv_2 \times \vv_0}{\norm{\vv_2 \times \vv_0}}=
\begin{pmatrix}
0 \\ 1 \\ 0
\end{pmatrix},
\end{aligned}
\end{equation}
if $uu^\circ<1$,
where $\delta$ and $\delta^\circ$ are constants which satisfy either of the following:
\begin{equation*}
\begin{aligned}
\text{Case A: } &&
\delta&=\frac{1}{\det(\vn, \vv^\circ_2, \vv^\circ_0)}=\frac{1}{2 u^\circ (1-u u^\circ)}, \quad
\delta^\circ=\frac{\det(\vn, \vv^\circ_2, \vv^\circ_0)}{(\vv_2-\vv_0)\cdot(\vv^\circ_2-\vv^\circ_0)}=\frac{u^\circ}{2}, \\
\text{Case B: } &&
\delta&=\frac{\det(\vn, \vv_2 , \vv_0)}{(\vv_2-\vv_0)\cdot(\vv^\circ_2-\vv^\circ_0)}=\frac{u}{2(1-u u^\circ)}, \quad
\delta^\circ=\frac{1}{\det(\vn, \vv_2 , \vv_0)}=\frac{1}{2u}.
\end{aligned}
\end{equation*}
Moreover, Case A can occur only when $\angle (\Pi_H \vv^\circ_2) o (\Pi_H \vv^\circ_0) < \pi$ holds on $H$, which is equivalent to 
$(\vn \times \Pi_H \vv^\circ_2) \cdot (\Pi_H \vv^\circ_0)=
(\vn \times \vv^\circ_2) \cdot \vv^\circ_0 = 2u^\circ (1-uu^\circ)>0$,
that is, $u^\circ>0$ holds.
In addition, by the definition of $L^\circ$, there exists $\sigma^\circ \in \R$ such that $\hat{\vw}^\circ:=\vw^\circ+\sigma^\circ \vn \in K^\circ$.
By the $S_4$-symmetry and the convexity of $K^\circ$, we see
\begin{equation*}
g^2 \hat{\vw}^\circ = \vw^\circ - \sigma^\circ \vn \in K^\circ, \quad
\vw^\circ = \frac{\hat{\vw}^\circ + g^2 \hat{\vw}^\circ}{2}\in K^\circ.
\end{equation*}

Next, we check the assumptions of Lemma \ref{lem:4} for $L_{0}$ and $L^\circ_{0}$.
Since
\begin{equation*}
\vv_0 \cdot \vv^\circ_0=
\vv_1 \cdot \vv^\circ_1=1, \quad
\vv_0 \cdot \vv^\circ_1=-t^\circ-uu^\circ, \quad
\vv_1 \cdot \vv^\circ_0=t^\circ-uu^\circ,
\end{equation*}
we can apply Lemma \ref{lem:4} to the pair $L_0$ and $L^\circ_0$
if $uu^\circ>-1$ and $\abs{t^\circ}<1+u u^\circ$.
From \eqref{eq:23}, 
there exists $\sigma^\circ_0 \in \R$ such that
\begin{equation}
\label{eq:21}
\begin{aligned}
&L_0=
o*([\vv_0,\vw_0] \cup [\vw_0,\vv_1]), \quad
L^\circ_0= 
o*([\Pi_{H_0} \vv_{0}^\circ, \tilde{\vw}^\circ_0] \cup [\tilde{\vw}^\circ_0, \Pi_{H_0} \vv_1^\circ]), \\
&\vw_0:=\delta_0 \vn_0 \times (\vv^\circ_0-\vv^\circ_1), \quad
\tilde{\vw}^\circ_0:=\delta^\circ_0 \vn_0 \times (\vv_0-\vv_1), \\
& \tilde{\vw}^\circ_0=\Pi_{H_0} \vw^\circ_0, \quad
\vw^\circ_0:=\delta^\circ_0 \vn_0 \times (\vv_0-\vv_1) + \sigma^\circ_0 \vn_0 \in K^\circ, \\
&\vn_0:=\frac{\vv_0 \times \vv_1}{\norm{\vv_0 \times \vv_1}}=
\frac{1}{\sqrt{1+2u^2}}
\begin{pmatrix}
-u \\ u \\ 1
\end{pmatrix},
\end{aligned}
\end{equation}
if $uu^\circ>-1$ and $\abs{t^\circ}<1+u u^\circ$,
where $\delta_0$ and $\delta^\circ_0$ satisfy either
\begin{equation*}
\begin{aligned}
&\text{Case A$'$:} &
\delta_0&=\frac{1}{\det(\vn_0, \vv^\circ_0,  \vv^\circ_1)}=\frac{\sqrt{1 +2 u^2}}{1-(u u^\circ)^2 +(t^\circ)^2}, \\
&& \delta^\circ_0&=\frac{\det(\vn_0, \vv^\circ_0,  \vv^\circ_1)}{(\vv_0-\vv_1)\cdot(\vv^\circ_0-\vv^\circ_1)}
=\frac{1-(u u^\circ)^2 +(t^\circ)^2}{2(1+uu^\circ)\sqrt{1+2u^2}}, \\
&\text{or} && \\
&\text{Case B$'$:}&
\delta_0&=\frac{\det(\vn_0, \vv_0,  \vv_1)}{(\vv_0-\vv_1)\cdot(\vv^\circ_0-\vv^\circ_1)}=\frac{\sqrt{1+2u^2}}{2(1+uu^\circ)}, \quad
\delta^\circ_0=\frac{1}{\det(\vn_0, \vv_0,  \vv_1)}=\frac{1}{\sqrt{1+2u^2}}.
\end{aligned}
\end{equation*}
Moreover, Case A$'$ can occur only when $\angle (\Pi_{H_0}\vv^\circ_{0})o (\Pi_{H_0}\vv^\circ_1) < \pi$ holds on $H_0$, which is equivalent to 
$(\vn_0 \times \Pi_{H_0} \vv^\circ_{0}) \cdot (\Pi_{H_0}\vv^\circ_1)=(\vn_0 \times \vv^\circ_{0}) \cdot \vv^\circ_1>0$. 
Thus Case A$'$ can occur only if 
\begin{equation}
\label{eq:25}
1- (u u^\circ)^2 + (t^\circ)^2>0.
\end{equation}

To continue the argument, based on the above calculations and \eqref{eq:12}, 
we shall consider the following five cases:
\begin{description}
\item[Case I] $-1 < u u^\circ < 1$, $\abs{t^\circ} < 1+u u^\circ$.
\item[Case II] $-1 < u u^\circ < 1$, $t^\circ = \pm(1+u u^\circ)$.
\item[Case III] $u u^\circ=-1$. 
\item[Case IV] $u u^\circ=1$, $\abs{t^\circ}<2$.
\item[Case V] $u u^\circ=1$, $t^\circ=\pm 2$.
\end{description}

\paragraph{Case I: $-1 < u u^\circ < 1$, $\abs{t^\circ} < 1+u u^\circ$.}

In this case, we can apply Lemma \ref{lem:4} to the pairs $L$ and $L^\circ$, $L_0$ and $L^\circ_0$.
Then, as we have seen,
\eqref{eq:17} and \eqref{eq:21} hold. 
By using $\vw_1:=g \vw_0$ and $\vw^\circ_1:=g \vw^\circ_0$, 
we have
\begin{equation*}
\begin{aligned}
\mathcal{C}&:= 
\mathcal{C}_{K}(\vv_0, \vw_0, \vv_1, \vw_1, \vv_2, \vw, \vv_0) \\
&=
[\vv_0, \vw_0] \cup
[\vw_0, \vv_1] \cup
[\vv_1, \vw_1] \cup
[\vw_1, \vv_2] \cup
[\vv_2, \vw] \cup
[\vw, \vv_0], \\
\mathcal{C^\circ}&:= 
\mathcal{C}_{K^\circ}(
\vv^\circ_0, \vw^\circ_0, \vv^\circ_1, \vw^\circ_1, \vv^\circ_2, \vw^\circ, \vv^\circ_0) \\
&=
[\vv^\circ_0, \vw^\circ_0] \cup
[\vw^\circ_0, \vv^\circ_1] \cup
[\vv^\circ_1, \vw^\circ_1] \cup
[\vw^\circ_1, \vv^\circ_2] \cup
[\vv^\circ_2, \vw^\circ] \cup
[\vw^\circ, \vv^\circ_0],
\end{aligned}
\end{equation*}
as in Section \ref{sec:3.4}.
Here we make some calculations which will be used later:
\begin{equation*}
\begin{aligned}
\vw_0&=\frac{\delta_0}{\sqrt{1+2u^2}}
\begin{pmatrix}
1+uu^\circ-t^\circ \\
1+uu^\circ+t^\circ \\
-2u t^\circ
\end{pmatrix}, \quad
\vw^\circ_0=
\delta^\circ_0 \sqrt{1+2u^2}
\begin{pmatrix}
1 \\ 1 \\ 0
\end{pmatrix}
+ 
\frac{\sigma^\circ_0}{\sqrt{1+2u^2}}
\begin{pmatrix}
-u \\ u \\ 1
\end{pmatrix}, \\
\vw&=2 \delta
\begin{pmatrix}
0 \\ 0 \\ 1-uu^\circ
\end{pmatrix}, \quad
\vw^\circ =
2 \delta^\circ
\begin{pmatrix}
0 \\ 0 \\ 1
\end{pmatrix}, 
\end{aligned}
\end{equation*}
\begin{equation*}
\begin{aligned}
& \vv_0 \times \vw_0 + \vw_0 \times \vv_1
=\frac{2 \delta_0 (1+uu^\circ)}{\sqrt{1+2u^2}}
\begin{pmatrix}
-u \\ u \\ 1
\end{pmatrix}, \\
&\vv^\circ_0 \times \vw^\circ_0 + \vw^\circ_0 \times \vv^\circ_1
=2 \delta^\circ_0 \sqrt{1+2u^2}
\begin{pmatrix}
-u^\circ \\ u^\circ \\ 1-uu^\circ
\end{pmatrix}
+\frac{\sigma^\circ_0}{\sqrt{1+2u^2}}
\begin{pmatrix}
-(1+ uu^\circ - t^\circ) \\
-(1+ uu^\circ + t^\circ) \\
2u t^\circ
\end{pmatrix}
, \\
& \vv_1 \times \vw_1 + \vw_1 \times \vv_2
=\frac{2 \delta_0 (1+uu^\circ)}{\sqrt{1+2u^2}}
\begin{pmatrix}
u \\ u \\ 1
\end{pmatrix}, \\
&\vv^\circ_1 \times \vw^\circ_1 + \vw^\circ_1 \times \vv^\circ_2
=2 \delta^\circ_0 \sqrt{1+2u^2}
\begin{pmatrix}
u^\circ \\ u^\circ \\ 1-uu^\circ
\end{pmatrix}
+\frac{\sigma^\circ_0}{\sqrt{1+2u^2}}
\begin{pmatrix}
-(1+ uu^\circ + t^\circ) \\
1+ uu^\circ - t^\circ \\
2u t^\circ
\end{pmatrix}, \\
& \vv_2 \times \vw + \vw \times \vv_0
=
4 \delta
\begin{pmatrix}
0 \\ 1-u u^\circ \\ 0
\end{pmatrix}, \quad
\vv^\circ_2 \times \vw^\circ + \vw^\circ \times \vv^\circ_0
=
4 \delta^\circ
\begin{pmatrix}
-t^\circ \\ 1-u u^\circ \\  0
\end{pmatrix}.
\end{aligned}
\end{equation*}
Then, we have
\begin{align}
\label{eq:31}
\abs{o*\mathcal{C}_K(\vv_2, \vv_0)}&=\frac{\norm{\vv_2 \times \vw + \vw \times \vv_0}}{2}
=2 \delta (1-u u^\circ)
=
\begin{cases}
1/u^\circ & (\text{Case A}), \\
u & (\text{Case B}),
\end{cases}
\\
\notag
\frac{2 u \abs{o*\mathcal{C}_K(\vv_{0}, \vv_{1})}}{\sqrt{1+2u^2}}
&=
\frac{u \norm{\vv_{0} \times \vw_0 + \vw_0 \times \vv_{1}}}{\sqrt{1+2u^2}}
=
\frac{2 \delta_0 u (1+uu^\circ)}{\sqrt{1+2u^2}} \\
\label{eq:43}
&=
\begin{cases}
2 u (1+uu^\circ)/ (1-(u u^\circ)^2 +(t^\circ)^2) & (\text{Case A$'$}), \\
u & (\text{Case B$'$}).
\end{cases}
\end{align}
In addition, as the limit of \eqref{eq:22} as $n\rightarrow \infty$,
the condition \eqref{eq:4} for $K$ holds, which implies
\begin{equation}
\label{eq:44}
2 \delta (1-u u^\circ)
=
\frac{2 \delta_0 u (1+uu^\circ)}{\sqrt{1+2u^2}}.
\end{equation}

Here, for $\vx:=(-t^\circ, 1+u u^\circ, 2 u (1-u u^\circ))^\mathrm{T} \in \R^3$, 
we can easily check that $\mathcal{C}$ is a simple closed curve around $\vx/\mu_K(\vx)$ on $\partial K$.
In contrast, the shape of the curve $\mathcal{C}^\circ$ is slightly complicated.
We now check that $\mathcal{C}^\circ$ is a simple closed curve around $\vx^\circ/\mu_{K^\circ}(\vx^\circ)$ on $\partial K^\circ$, where $\vx^\circ:=(0,2u,1)^\mathrm{T}$.
By 
\begin{equation*}
\delta \delta^\circ= \frac{1}{4(1-uu^\circ)}, \quad
\delta_0 \delta^\circ_0 = \frac{1}{2(1+uu^\circ)}, \quad
\vw \cdot \vw^\circ_0 \leq 1, \quad
(g\vw) \cdot \vw^\circ_0=-\vw \cdot \vw^\circ_0 \leq 1,
\end{equation*}
and \eqref{eq:44},  
we have
\begin{equation}
\label{eq:45}
\abs{\sigma^\circ_0} u \leq \delta^\circ_0 (1+2 u^2).
\end{equation}
From \eqref{eq:12} and \eqref{eq:45}, we obtain 
\begin{equation*}
(\vv^\circ_0 \times \vw^\circ_0) \cdot \vx^\circ = \frac{(1+uu^\circ-t^\circ)(\delta^\circ_0(1+2u^2) - \sigma^\circ_0 u)}{\sqrt{1+2u^2}} \geq 0.
\end{equation*}
Similary, by a direct calculation, we see that
$(\vw^\circ_0 \times \vv^\circ_1)\cdot \vx^\circ$,
$(\vv^\circ_1 \times \vw^\circ_1)\cdot \vx^\circ$,
$(\vw^\circ_1 \times \vv^\circ_2)\cdot \vx^\circ$,
$(\vv^\circ_2 \times \vw^\circ)\cdot \vx^\circ$, and
$(\vw^\circ \times \vv^\circ_0)\cdot \vx^\circ$ are non-negative. Hence, 
$\mathcal{C}^\circ$ is a simple closed curve around $\vx^\circ/\mu_{K^\circ}(\vx^\circ)$ on $\partial K^\circ$.

We put $K_0 := o*\mathcal{S}_K(\mathcal{C})$ and $K^\circ_0 := o*\mathcal{S}_{K^\circ}(\mathcal{C}^\circ)$. Then, by the $S_4$-symmetry, we obtain $\abs{K}=4 \abs{K_0}$ and $\abs{K^\circ}=4 \abs{K^\circ_0}$.
In addition, by the above calculations, we have
\begin{equation*}
\begin{aligned}
\overline{\mathcal{C}}
&= 
\frac{2 \delta_0 u(1+uu^\circ)}{\sqrt{1+2u^2}}
\begin{pmatrix}
0 \\ 1 \\ 1/u
\end{pmatrix}
+
2 \delta (1-uu^\circ)
\begin{pmatrix}
0 \\ 1 \\ 0
\end{pmatrix}, 
\\
\overline{\mathcal{C}^\circ}
&= 
2 \delta^\circ_0 \sqrt{1+2u^2}
\begin{pmatrix}
0 \\ u^\circ  \\ 1-uu^\circ
\end{pmatrix}
+
2 \delta^\circ 
\begin{pmatrix}
-t^\circ  \\ 1-uu^\circ \\ 0
\end{pmatrix}
+\frac{\sigma^\circ_0}{\sqrt{1+2u^2}}
\begin{pmatrix}
-(1+ uu^\circ)\\
- t^\circ \\
2u t^\circ
\end{pmatrix}.
\end{aligned}
\end{equation*}
It follows from the equality condition, Lemma \ref{lem:7}, and \eqref{eq:44} that
\begin{equation}
\label{eq:33}
4 = 9 \abs{K_0} \abs{K^\circ_0} \geq 
\overline{\mathcal{C}} \cdot
\overline{\mathcal{C}^\circ} = 4,
\end{equation}
which means that the equality condition in Lemma \ref{lem:7} is satisfied. 
Thus, there exist $\overline{\vv}, \overline{\vv}^\circ \in \R^3$ and $\tau, \tau^\circ>0$ such that
\begin{equation}
\label{eq:34}
K_0=o*(\overline{\vv} * \mathcal{C}), \quad
K^\circ_0=o* (\overline{\vv}^\circ *\mathcal{C}^\circ), \quad
\overline{\vv} = \tau \overline{\mathcal{C}^\circ}, \quad
\overline{\vv}^\circ = \tau^\circ \overline{\mathcal{C}}.
\end{equation}
In a similar way as in Section \ref{sec:3.4}, we see that $K$ and $K^\circ$ are polyhedra.
Moreover, for each face $F$ of $K$, there exists $i \in \{0,1,2,3\}$ such that $g^i \overline{\vv} \in F$.
Similarly, for each face $F^\circ$ of $K^\circ$, there exists $i \in \{0,1,2,3\}$ such that $g^i \overline{\vv}^\circ \in F^\circ$.

Next, we carry out case analysis with respect to \eqref{eq:31} and \eqref{eq:43}.
If we suppose Case A and Case B$'$, then
\eqref{eq:31}, \eqref{eq:43}, and \eqref{eq:44} assert $1/u^\circ=u$, which contradicts $u u^\circ \in (-1,1)$.
If we suppose Case B and Case A$'$, then 
$(1+u u^\circ)^2 =(t^\circ)^2$ holds in the same way, which contradicts $\abs{t^\circ}<1+uu^\circ$. 

We now assume Case A and Case A$'$. 
Recall that $\vw_0$, $\vw_1$, $\vw$ are on $\mathcal{C}_K(\vv_0, \vv_1)$, $\mathcal{C}_K(\vv_1, \vv_2)$, $\mathcal{C}_K(\vv_2, \vv_0)$, respectively, and $\det(\vv_0, \vv_1, \vv_2) =2u>0$.
Thus, we have
\begin{equation}
\label{eq:13}
\mathcal{S}_K(\vw, \vv_0, \vw_0), \quad
\mathcal{S}_K(\vw_0, \vv_1, \vw_1), \quad
\mathcal{S}_K(\vw_1, \vv_2, \vw) \subset \mathcal{S}_K(\mathcal{C}).
\end{equation}
On the other hand, 
by \eqref{eq:31}, \eqref{eq:43}, and \eqref{eq:44}, we have
\begin{equation}
\label{eq:41}
\frac{2 \delta_0 u (1+uu^\circ)}{\sqrt{1+2u^2}} =
\frac{2 u (1+uu^\circ)}{1-(u u^\circ)^2 +(t^\circ)^2}
=
\frac{1}{u^\circ}, \quad
\frac{\delta_0(1-(u u^\circ)^2 +(t^\circ)^2)}{\sqrt{1+2u^2}} =
1,
\end{equation}
from which we obtain
$\vw \cdot \vv^\circ_0 = \vv_0 \cdot \vv^\circ_0 = \vw_0 \cdot \vv^\circ_0=1$ and 
$\det(\vw, \vv_0, \vw_0)=2 \delta \delta_0 (1-u u^\circ)(1+u u^\circ+t^\circ)/\sqrt{1+2u^2}>0$.
Thus there exists a face $F$ of $K$ such that 
$\mathcal{S}_K(\vw, \vv_0, \vw_0) = \conv\{\vw, \vv_0, \vw_0\} \subset F$ and the polar dual $\vv^\circ_0$ of $F$ is a vertex of $K^\circ$.
From \eqref{eq:34}, we obtain $\overline{\vv} * \mathcal{C} = \mathcal{S}_K(\mathcal{C})$, which implies $\overline{\vv} \in F$ by \eqref{eq:13}.
Hence, we see that $\overline{\vv} \cdot \vv^\circ_0=1$.
Similarly, we also have $\overline{\vv} \cdot \vv^\circ_1=\overline{\vv} \cdot \vv^\circ_2=1$.
Since 
\begin{equation*}
\overline{\vv}= \tau \overline{\mathcal{C}^\circ}=
\tau
\begin{pmatrix}
-t^\circ u^\circ \\
u^\circ(1+u u^\circ) \\
2 u u^\circ(1-uu^\circ)
\end{pmatrix}
+
\frac{\tau \sigma^\circ_0}{\sqrt{1+2u^2}}
\begin{pmatrix}
-(1+uu^\circ) \\
-t^\circ \\
2ut^\circ
\end{pmatrix},
\end{equation*}
we have
\begin{equation*}
\begin{aligned}
\overline{\vv} \cdot \vv^\circ_0 &=
2 \tau u (u^\circ)^2(1-uu^\circ+t^\circ) + 
\frac{2\tau \sigma^\circ_0 u u^\circ(-1-uu^\circ+t^\circ)}{\sqrt{1+2u^2}}=1, \\
\overline{\vv} \cdot \vv^\circ_1 &=
4\tau u^2 (u^\circ)^3=1, \\ 
\overline{\vv} \cdot \vv^\circ_2 &=
2 \tau u (u^\circ)^2(1-uu^\circ-t^\circ) + 
\frac{2\tau \sigma^\circ_0 u u^\circ(1+uu^\circ+t^\circ)}{\sqrt{1+2u^2}}=1, 
\end{aligned}
\end{equation*}
which imply that
\begin{equation*}
\tau = \frac{1}{4 u^2 (u^\circ)^3}, \quad
\frac{\sigma^\circ_0}{\sqrt{1+2u^2}}
=
\frac{u^\circ t^\circ}{1+u u^\circ}.
\end{equation*}
In addition, from \eqref{eq:41} we have $(t^\circ)^2=3 (u u^\circ)^2+2uu^\circ-1=(3 uu^\circ-1)(1+uu^\circ)$.
By a direct calculation, we obtain
\begin{equation*}
\begin{aligned}
\abs{K} &\geq \abs{\conv\{\overline{\vv}, g\overline{\vv}, g^2\overline{\vv}, g^3\overline{\vv}\}} \\
&= 
\frac{4}{3} \tau^3 
\left((t^\circ)^2 + (1+uu^\circ)^2\right)
\left(
(u^\circ)^2 + \frac{(\sigma^\circ_0)^2}{1+2u^2}
\right)
\left|
2u u^\circ(1-uu^\circ) + \frac{2 \sigma^\circ_0 u t^\circ}{\sqrt{1+2u^2}}
\right|
=
\frac{4}{3 u^2 (u^\circ)^3},\\
\abs{K^\circ} &\geq \abs{\conv\{\vv^\circ_0, \vv^\circ_1, \vv^\circ_2, \vv^\circ_3\}} = 
\frac{16 u^2 (u^\circ)^3}{3}.
\end{aligned}
\end{equation*}
Here, we have used the formula
$\abs{\conv\{\vx, g\vx, g^2\vx, g^3\vx\}}=4(x^2+y^2) \abs{z}/3$  for  
$\vx=(x,y,z)^{\mathrm{T}}$,
which can be easily checked.
Therefore, by the equality condition, we get 
\begin{equation*}
K=\conv\{\overline{\vv}, g\overline{\vv}, g^2\overline{\vv}, g^3\overline{\vv}\}, \quad
K^\circ=
\conv\{\vv^\circ_0, \vv^\circ_1, \vv^\circ_2, \vv^\circ_3\},
\end{equation*}
hence, $K$ and $K^\circ$ are $3$-simplices.

Finally, we assume Case B and Case B$'$. Then, we have
\begin{equation}
\label{eq:27}
\vw^\circ=
\begin{pmatrix}
0 \\ 0 \\ 1/u
\end{pmatrix}, \quad
\vw^\circ_0=
\begin{pmatrix}
1 \\ 1 \\ 0
\end{pmatrix}
+\frac{\sigma^\circ_0}{\sqrt{1+2u^2}}
\begin{pmatrix}
-u \\ u \\ 1
\end{pmatrix}, \quad
\vw^\circ_1=
\begin{pmatrix}
-1 \\ 1 \\ 0
\end{pmatrix}
+\frac{\sigma^\circ_0}{\sqrt{1+2u^2}}
\begin{pmatrix}
-u \\ -u \\ -1
\end{pmatrix}
\end{equation}
and 
\begin{equation}
\label{eq:24}
\begin{aligned}
\vv_0 \cdot \vw^\circ &= \vv_0 \cdot \vv^\circ_0 = \vv_0 \cdot \vw^\circ_0=1, \\
\vv_1 \cdot \vw^\circ_0 &= \vv_1 \cdot \vv^\circ_1 = \vv_1 \cdot \vw^\circ_1=1, \\
\vv_2 \cdot \vw^\circ_1 &= \vv_2 \cdot \vv^\circ_2 = \vv_2 \cdot \vw^\circ=1.
\end{aligned}
\end{equation}
Moreover, by a direct calculation, it holds that
\begin{equation*}
\det(\vw^\circ, \vv^\circ_0, \vw^\circ_0)
+
\det(\vw^\circ_0, \vv^\circ_1, \vw^\circ_1)
+
\det(\vw^\circ_1, \vv^\circ_2, \vw^\circ)
=\frac{2(1+2u^2)+2(\sigma^\circ_0)^2 u^2}{u(1+2u^2)}>0,
\end{equation*}
which means that
at least one of 
the three terms of the left-hand side is not zero.
If $\det(\vw^\circ, \vv^\circ_0, \vw^\circ_0) > 0$, then, by \eqref{eq:24}, 
there exists a face $F^\circ$ of $K^\circ$ such that 
$\conv\{\vw^\circ, \vv^\circ_0, \vw^\circ_0\} \subset F^\circ$ and the polar dual $\vv_0$ of $F^\circ$ is a vertex of $K$.
By the $S_4$-symmetry, each $\vv_i$ $(i=0,1,2,3)$ is a vertex of $K$.
We have the same conclusion in the case that $\det(\vw^\circ_0, \vv^\circ_1, \vw^\circ_1) > 0$ or $\det(\vw^\circ_1, \vv^\circ_2, \vw^\circ) > 0$.
Consequently, in either case, $\vv_0$ is a vertex of $K$. Let $F^\circ$ be the polar dual of $\vv_0$.
Then, $F^\circ$ is a face of $K^\circ$ and there exists $i \in \{0,1,2,3\}$ such that
$g^i \overline{\vv}^\circ \in F^\circ$, hence
$\vv_0 \cdot g^i \overline{\vv}^\circ=1$.
On the other hand, we have
\begin{equation*}
\overline{\vv}^\circ = \tau^\circ \overline{\mathcal{C}}
=\tau^\circ
\begin{pmatrix}
0 \\ 2u \\ 1
\end{pmatrix}, \quad
\vv_0 \cdot \overline{\vv}^\circ
=
\vv_0 \cdot g^2 \overline{\vv}^\circ
=
\vv_0 \cdot g^3\overline{\vv}^\circ
=\tau^\circ u, \quad
\vv_0 \cdot g\overline{\vv}^\circ=-3 \tau^\circ u <0.
\end{equation*}
Thus, $i \in \{0,2,3\}$ and $\tau^\circ=1/u$ hold, 
hence $\overline{\vv}^\circ=(1/u) \overline{\mathcal{C}} \in K^\circ$.
Therefore, we obtain
\begin{equation*}
\begin{aligned}
\abs{K} &\geq \abs{\conv\{\vv_0, \vv_1, \vv_2, \vv_3\}} = \frac{4u}{3}, \\
\abs{K^\circ} &\geq \abs{\conv\{\overline{\vv}^\circ, g \overline{\vv}^\circ, g^2 \overline{\vv}^\circ, g^3 \overline{\vv}^\circ \}} = \frac{16}{3u},
\end{aligned}
\end{equation*}
and
\begin{equation*}
K=\conv\{\vv_0, \vv_1, \vv_2, \vv_3\}, \quad
K^\circ=\conv\{\overline{\vv}^\circ, g \overline{\vv}^\circ, g^2 \overline{\vv}^\circ, g^3 \overline{\vv}^\circ \}
\end{equation*}
from the equality condition.
Thus, $K$ and $K^\circ$ are $3$-simplices.

\paragraph{Case II: $-1 < u u^\circ < 1$, $t^\circ = \pm(1+u u^\circ)$.}

We put
\begin{equation*}
\tilde{\vv}^\circ_0:=\Pi_{H_0} \vv^\circ_0 = \vv^\circ_0 + \lambda^\circ_0 (\vv_0 \times \vv_1), \quad
\tilde{\vv}^\circ_1:=\Pi_{H_0} \vv^\circ_1 = \vv^\circ_1 + \lambda^\circ_1 (\vv_0 \times \vv_1),
\end{equation*}
where $\lambda^\circ_0$ and $\lambda^\circ_1$ are constants.
By definition, $\vv_0 \times \vv_1$ and $\tilde{\vv}^\circ_0 \times \tilde{\vv}^\circ_1$ are normal vectors of $H_0$. In addition, since
$\vv_0 \cdot (\vv_0 \times \vv_1) =\vv_1 \cdot (\vv_0 \times \vv_1) =0$ and $(t^\circ)^2 = (1+ u u^\circ)^2$, 
we get
\begin{equation*}
\begin{aligned}
(\vv_0 \times \vv_1) \cdot (\tilde{\vv}^\circ_0 \times \tilde{\vv}^\circ_1) 
&=
(\vv_0 \cdot \tilde{\vv}^\circ_0)(\vv_1 \cdot \tilde{\vv}^\circ_1)
-
(\vv_0 \cdot \tilde{\vv}^\circ_1)(\vv_1 \cdot \tilde{\vv}^\circ_0) \\
&=
(\vv_0 \cdot \vv^\circ_0)(\vv_1 \cdot \vv^\circ_1)
-
(\vv_0 \cdot \vv^\circ_1)(\vv_1 \cdot \vv^\circ_0)
=2(1+ uu^\circ) >0.
\end{aligned}
\end{equation*}
Thus, $\vv_0 \times \vv_1$ and $\tilde{\vv}^\circ_0 \times \tilde{\vv}^\circ_1$ are positively proportional. 
Hence, we obtain 
$\angle \tilde{\vv}^\circ_0 o \tilde{\vv}^\circ_1 < \pi$ and $\conv\{o,\tilde{\vv}^\circ_0, \tilde{\vv}^\circ_1\} \subset L^\circ_0$.
By the equality condition, we have
\begin{equation*}
\begin{aligned}
\frac{1}{2}(1+uu^\circ) &= \frac{1}{4}(\vv_0 - \vv_1) \cdot (\vv^\circ_0 - \vv^\circ_1) = \abs{L_0} \abs{L^\circ_0} 
 \geq 
\abs{\conv\{o,\vv_0, \vv_1\}} \abs{\conv\{o,\tilde{\vv}^\circ_0, \tilde{\vv}^\circ_1\}} \\
&= \frac{1}{4} \norm{\vv_0 \times \vv_1} \norm{\tilde{\vv}^\circ_0 \times \tilde{\vv}^\circ_1}
= \frac{1}{4} (\vv_0 \times \vv_1) \cdot (\tilde{\vv}^\circ_0 \times \tilde{\vv}^\circ_1) 
= \frac{1}{2} (1+uu^\circ).
\end{aligned}
\end{equation*}
It follows that
\begin{equation*}
L_0=
\conv\{o,\vv_0, \vv_1\}, \quad
L^\circ_0= \conv\{o,\tilde{\vv}^\circ_0, \tilde{\vv}^\circ_1\},
\end{equation*}
which mean that
\begin{equation*}
\mathcal{C}_K(\vv_0, \vv_1) = [\vv_0, \vv_1], \quad
\mathcal{C}_{K^\circ}(\vv^\circ_0, \vv^\circ_1) = [\vv^\circ_0, \vv^\circ_1],
\end{equation*}
which imply
\begin{equation}
\label{eq:32}
\frac{2 u \abs{o*C_K(\vv_{0}, \vv_{1})}}{\sqrt{1+2u^2}}
=
\frac{u \norm{\vv_0 \times \vv_1}}{\sqrt{1+2u^2}} =u.
\end{equation}
On the other hand, since $u u^\circ<1$, we can apply Lemma \ref{lem:4} to the pair $L$ and $L^\circ$, 
and so \eqref{eq:17} and \eqref{eq:31} hold.
From  \eqref{eq:4}, \eqref{eq:31}, \eqref{eq:32}, and $u u^\circ<1$, Case B must occur. Thus
\begin{equation*}
\vw=
\begin{pmatrix}
0 \\ 0 \\ u
\end{pmatrix}
\in [\vv_2, \vv_0] \subset \partial K, \quad
\vw^\circ
=
\begin{pmatrix}
0 \\ 0 \\ 1/u
\end{pmatrix}
 \in \partial K^\circ,
\end{equation*}
and so we have
\begin{equation*}
\begin{aligned}
\mathcal{C}&:= 
\mathcal{C}_{K}(\vv_{0}, \vv_1, \vv_2, \vv_0) =
[\vv_0, \vv_1] \cup
[\vv_1,\vv_2] \cup
[\vv_2, \vv_0], \\
\mathcal{C^\circ}&:= 
\mathcal{C}_{K^\circ}(\vv^\circ_0, \vv^\circ_1, \vv^\circ_2, \vw^\circ, \vv^\circ_0) =
[\vv^\circ_0, \vv^\circ_1] \cup
[\vv^\circ_1, \vv^\circ_2] \cup
[\vv^\circ_2, \vw^\circ] \cup
[\vw^\circ, \vv^\circ_0].
\end{aligned}
\end{equation*}
We put
\begin{equation*}
K_0 := o*\mathcal{S}_K(\mathcal{C}), \quad
K^\circ_0 := o*\mathcal{S}_{K^\circ}(\mathcal{C}^\circ),
\end{equation*}
then we have
\begin{equation*}
\begin{aligned}
\overline{\mathcal{C}}
&=
\frac{1}{2}\left(
\vv_0 \times  \vv_1
+
\vv_1 \times \vv_2
+
\vv_2 \times \vv_0
\right)
=
\begin{pmatrix}
0 \\ 2u \\ 1
\end{pmatrix}, \\
\overline{\mathcal{C}^\circ}
&=
\frac{1}{2}\left(
\vv^\circ_0 \times \vv^\circ_1
+
\vv^\circ_1 \times \vv^\circ_2
+
\vv^\circ_2 \times \vw^\circ
+
\vw^\circ \times \vv^\circ_0
\right)
=
\begin{pmatrix}
- t^\circ (u^\circ + 1/u) \\
(1-u u^\circ)(u^\circ+1/u) \\
(1-u u^\circ)^2 +  (t^\circ)^2
\end{pmatrix}.
\end{aligned}
\end{equation*}
Then, \eqref{eq:33} in Case I holds from the equality condtion, Lemma \ref{lem:7}, and $(t^\circ)^2=(1+uu^\circ)^2$.
Namely, the equality condition in Lemma \ref{lem:7} holds.
It follows that there exist
$\overline{\vv}, \overline{\vv}^\circ \in \R^3$ and $\tau, \tau^\circ>0$ such that \eqref{eq:34} holds.
We note that $\overline{\mathcal{C}}$ is the same one as in Case B--Case B$'$ of Case I.
Now we suppose that $t^\circ=1+uu^\circ$.
In the same way as in Case I,
we see that $\vv_2$ is a vertex of $K$, because $\vv_2 \cdot \vv^\circ_1=\vv_2 \cdot \vv^\circ_2=\vv_2 \cdot \vw^\circ=1$ and $\det(\vv^\circ_1, \vv^\circ_2, \vw^\circ) > 0$. 
Hence,
there exists $i \in \{0,1,2,3\}$ such that $\vv_2 \cdot g^i \overline{\vv}^\circ=1$.
Since
\begin{equation*}
\overline{\vv}^\circ = \tau^\circ
\begin{pmatrix}
0 \\ 2u \\ 1
\end{pmatrix}, \quad
\vv_2 \cdot \overline{\vv}^\circ
=
\vv_2 \cdot g^1 \overline{\vv}^\circ
=
\vv_2 \cdot g^2\overline{\vv}^\circ
=\tau^\circ u, \quad
\vv_2 \cdot g^3\overline{\vv}^\circ=-3 \tau^\circ u <0,
\end{equation*}
we have $i \in \{0,1,2\}$ and $\tau^\circ=1/u$,
which means $\overline{\vv}^\circ=(1/u) \overline{\mathcal{C}} \in K^\circ$.
By the same argument as in Case B--Case B$'$ of Case I, we see that $K$ and $K^\circ$ are $3$-simplices.
In the case where $t^\circ=-(1+u u^\circ)$, $\vv_0$ is a vertex of $K$, since 
$\vv_0 \cdot \vw^\circ=\vv_0 \cdot \vv^\circ_0=\vv_0 \cdot \vv^\circ_1=1$ and $\det(\vw^\circ, \vv^\circ_0, \vv^\circ_1) > 0$.
The same argument as above implies that $K$ and $K^\circ$ are $3$-simplices.

\paragraph{Case III: $u u^\circ=-1$.}

By \eqref{eq:12}, we have $t^\circ=0$.
Since $\vv^\circ_0=(2, 0, -1/u)^\mathrm{T}$, we obtain
\begin{equation*}
\abs{K} \geq \abs{\conv\{\vv_0, \vv_1, \vv_2, \vv_3\}} = \frac{4u}{3}, \quad
\abs{K^\circ} \geq \abs{\conv\{\vv^\circ_0, \vv^\circ_1, \vv^\circ_2, \vv^\circ_3\}} = \frac{16}{3u}.
\end{equation*}
By the equality condition, 
$K=\conv\{\vv_0, \vv_1, \vv_2, \vv_3\}$
and 
$K^\circ=\conv\{\vv^\circ_0, \vv^\circ_1, \vv^\circ_2, \vv^\circ_3\}$ hold, 
hence $K$ and $K^\circ$ are $3$-simplices.

\paragraph{Case IV: $u u^\circ=1$, $\abs{t^\circ}<2$.}

Since $\vv_2 \cdot \vv^\circ_2 = \vv_0 \cdot \vv^\circ_0 = \vv_2 \cdot \vv^\circ_0 = \vv_0 \cdot \vv^\circ_2=1$, the inner product of each point on $[\vv_2, \vv_0]$ and each point on $[\vv^\circ_2, \vv^\circ_0]$ is always one. This means that $[\vv_2, \vv_0] \subset \partial K$ and $[\vv^\circ_2, \vv^\circ_0] \subset \partial K^\circ$. On the other hand, since $u u^\circ>-1$ and $\abs{t^\circ}<2$, we can apply Lemma \ref{lem:4} to the pair $L_0$ and $L^\circ_0$. Then, we have
\begin{equation*}
L_0=\conv \left\{o, \vv_{0},\vw_0,\vv_1\right\}, \quad
\vw_0=\frac{\delta_0}{\sqrt{1+2u^2}}
\begin{pmatrix}
2-t^\circ \\
2+t^\circ \\
-2u t^\circ
\end{pmatrix},
\end{equation*}
since $uu^\circ=1$.
Since $\mathcal{C}_K(\vv_2, \vv_0)=[\vv_2, \vv_0]$ from $[\vv_2, \vv_0] \subset \partial K$, we obtain
\begin{equation*}
\begin{aligned}
\abs{o*C_K(\vv_2, \vv_0)}&=\frac{\norm{\vv_2 \times \vv_0}}{2}
=u, \\
\frac{2 u \abs{o*C_K(\vv_{0}, \vv_{1})}}{\sqrt{1+2u^2}}
&=
\frac{4 \delta_0 u}{\sqrt{1+2u^2}} 
=
\begin{cases}
4 u / (t^\circ)^2 & (\text{Case A$'$}), \\
u & (\text{Case B$'$}).
\end{cases}
\end{aligned}
\end{equation*}
Since \eqref{eq:4} holds for $K$, Case A$'$ does not occur.
In Case B$'$, we have
\begin{equation*}
\begin{aligned}
\mathcal{C}&:= 
\mathcal{C}_{K}(\vv_0, \vw_0, \vv_1, \vw_1, \vv_2, \vv_0) \\
&=
[\vv_0, \vw_0] \cup
[\vw_0, \vv_1] \cup
[\vv_1, \vw_1] \cup
[\vw_1, \vv_2] \cup
[\vv_2, \vv_0], \\
\mathcal{C^\circ}&:= 
\mathcal{C}_{K^\circ}(
\vv^\circ_0, \vw^\circ_0, \vv^\circ_1, \vw^\circ_1, \vv^\circ_2, \vv^\circ_0) \\
&=
[\vv^\circ_0, \vw^\circ_0] \cup
[\vw^\circ_0, \vv^\circ_1] \cup
[\vv^\circ_1, \vw^\circ_1] \cup
[\vw^\circ_1, \vv^\circ_2] \cup
[\vv^\circ_2, \vv^\circ_0], 
\end{aligned}
\end{equation*}
and put
\begin{equation*}
K_0 := o*\mathcal{S}_K(\mathcal{C}), \quad
K^\circ_0 := o*\mathcal{S}_{K^\circ}(\mathcal{C}^\circ).
\end{equation*}
Then, we have
\begin{equation*}
\overline{\mathcal{C}}
= 
\begin{pmatrix}
 0 \\ 2u \\ 1
\end{pmatrix}, \quad 
\overline{\mathcal{C}^\circ}
= 
\begin{pmatrix}
- t^\circ/u \\
2/u\\
0
\end{pmatrix}
+\frac{\sigma^\circ_0}{\sqrt{1+2u^2}}
\begin{pmatrix}
-2 \\
- t^\circ \\
2u t^\circ
\end{pmatrix},
\quad
4 = 9 \abs{K_0} \abs{K^\circ_0} \geq 
\overline{\mathcal{C}} \cdot
\overline{\mathcal{C}^\circ} = 4.
\end{equation*}
Thus, the equality condition in Lemma \ref{lem:7} holds, 
and so there exist $\overline{\vv}, \overline{\vv}^\circ \in \R^3$ and $\tau, \tau^\circ>0$ such that \eqref{eq:34} holds.
We note that $\overline{\mathcal{C}}$ is the same one as in Case B--Case B$'$ of Case I. Moreover, \eqref{eq:27} holds.
By the same argument as in Case I, we see that $K$ and $K^\circ$ are $3$-simplices.

\paragraph{Case V: $u u^\circ=1$, $t^\circ=\pm 2$.}

Since $t^\circ=\pm 2$, we have $\vv^\circ_0=(0, \pm 2, 1/u)^\mathrm{T}$, 
which asserts that
\begin{equation*}
\abs{K} \geq \abs{\conv\{\vv_0, \vv_1, \vv_2, \vv_3\}} = \frac{4u}{3}, \quad
\abs{K^\circ} \geq \abs{\conv\{\vv^\circ_0, \vv^\circ_1, \vv^\circ_2, \vv^\circ_3\}} = \frac{16}{3u}.
\end{equation*}
By the equality condition,
$K=\conv\{\vv_0, \vv_1, \vv_2, \vv_3\}$ and
$K^\circ=\conv\{\vv^\circ_0, \vv^\circ_1, \vv^\circ_2, \vv^\circ_3\}$, 
hence $K$ and $K^\circ$ are $3$-simplices.
\end{proof}

\section*{Funding}
The first author was supported by JSPS KAKENHI Grant Number JP20K03576. 
The second author was supported by JSPS KAKENHI Grant Number JP23K03189.

\begin{bibdiv}
\begin{biblist}
\bib{BF}{article}{
   author={Barthe, F.},
   author={Fradelizi, M.},
   title={The volume product of convex bodies with many hyperplane
   symmetries},
   journal={Amer. J. Math.},
   volume={135},
   date={2013},
   number={2},
   pages={311--347},
   doi={10.1353/ajm.2013.0018},
}
\bib{BMMR}{article}{
   author={B\"{o}r\"{o}czky, K. J.},
   author={Makai, E., Jr.},
   author={Meyer, M.},
   author={Reisner, S.},
   title={On the volume product of planar polar convex bodies---lower
   estimates with stability},
   journal={Studia Sci. Math. Hungar.},
   volume={50},
   date={2013},
   number={2},
   pages={159--198},
   doi={10.1556/SScMath.50.2013.2.1235},
}
\bib{CS}{book}{
   author={Conway, John H.},
   author={Smith, Derek A.},
   title={On quaternions and octonions: their geometry, arithmetic, and
   symmetry},
   publisher={A K Peters, Ltd., Natick, MA},
   date={2003},
   pages={xii+159},
}
\bib{FMZ}{article}{
   author={Fradelizi, Matthieu},
   author={Meyer, Mathieu},
   author={Zvavitch, Artem},
   title={An application of shadow systems to Mahler's conjecture},
   journal={Discrete Comput. Geom.},
   volume={48},
   date={2012},
   number={3},
   pages={721--734},
   doi={10.1007/s00454-012-9435-3},
}
\bib{IS}{article}{
   author={Iriyeh, Hiroshi},
   author={Shibata, Masataka},
   title={Symmetric Mahler's conjecture for the volume product in the
   $3$-dimensional case},
   journal={Duke Math. J.},
   volume={169},
   date={2020},
   number={6},
   pages={1077--1134},
   doi={10.1215/00127094-2019-0072},
}
\bib{IS2}{article}{
   author={Iriyeh, Hiroshi},
   author={Shibata, Masataka},
   title={Minimal volume product of three dimensional convex bodies with
   various discrete symmetries},
   journal={Discrete Comput. Geom.},
   volume={68},
   date={2022},
   number={3},
   pages={738--773},
   doi={10.1007/s00454-021-00357-6},
}
\bib{IS3}{article}{
   author={Iriyeh, Hiroshi},
   author={Shibata, Masataka},
   title={Minimal volume product of convex bodies with certain discrete
   symmetries and its applications},
   journal={Int. Math. Res. Not. IMRN},
   volume={2023},
   date={2023},
   number={21},
   pages={18001--18034},
   doi={10.1093/imrn/rnac284},
}
\bib{KR}{article}{
   author={Kim, Jaegil},
   author={Reisner, Shlomo},
   title={Local minimality of the volume-product at the simplex},
   journal={Mathematika},
   volume={57},
   date={2011},
   number={1},
   pages={121--134},
   doi={10.1112/S0025579310001555},
}
\bib{Ma1}{article}{
   label={Ma1},
   author={Mahler, Kurt},
   title={Ein Minimalproblem f\"ur konvexe Polygone},
   language={German},
   journal={Mathematica (Zutphen)},
   volume={B 7},
   date={1939},
   pages={118--127},
}
\bib{Ma2}{article}{
   label={Ma2},
   author={Mahler, Kurt},
   title={Ein \"Ubertragungsprinzip f\"ur konvexe K\"orper},
   language={German},
   journal={\v Casopis P\v est. Mat. Fys.},
   volume={68},
   date={1939},
   pages={93--102},
   issn={0528-2195},
}
\bib{Me}{article}{
   label={Me2},
   author={Meyer, Mathieu},
   title={Convex bodies with minimal volume product in ${\bf R}^2$},
   journal={Monatsh. Math.},
   volume={112},
   date={1991},
   number={4},
   pages={297--301},
   doi={10.1007/BF01351770},
}
\bib{MP}{article}{
   author={Meyer, Mathieu},
   author={Pajor, Alain},
   title={On the Blaschke-Santal\'o{} inequality},
   journal={Arch. Math. (Basel)},
   volume={55},
   date={1990},
   number={1},
   pages={82--93},
   doi={10.1007/BF01199119},
}
\bib{MR}{article}{
   author={Meyer, Mathieu},
   author={Reisner, Shlomo},
   title={Shadow systems and volumes of polar convex bodies},
   journal={Mathematika},
   volume={53},
   date={2006},
   number={1},
   pages={129--148 (2007)},
   doi={10.1112/S0025579300000061},
}
\bib{OR}{book}{
   author={Outerelo, Enrique},
   author={Ruiz, Jes\'{u}s M.},
   title={Mapping degree theory},
   series={Graduate Studies in Mathematics},
   volume={108},
   publisher={American Mathematical Society, Providence, RI; Real Sociedad
   Matem\'{a}tica Espa\~{n}ola, Madrid},
   date={2009},
   pages={x+244},
   doi={10.1090/gsm/108},
}
\bib{Sc}{book}{
   label={Sc1},
   author={Schneider, Rolf},
   title={Convex bodies: the Brunn-Minkowski theory},
   series={Encyclopedia of Mathematics and its Applications},
   volume={151},
   edition={Second expanded edition},
   publisher={Cambridge University Press, Cambridge},
   date={2014},
   pages={xxii+736},
}
\bib{Sc2}{article}{
   label={Sc2},
   author={Schneider, Rolf},
   title={Smooth approximation of convex bodies},
   journal={Rend. Circ. Mat. Palermo (2)},
   volume={33},
   date={1984},
   number={3},
   pages={436--440},
   doi={10.1007/BF02844505},
}
\end{biblist}
\end{bibdiv}
\end{document}